\documentclass[12pt]{amsart}
\usepackage{pb-diagram}

\usepackage{amscd}
\usepackage{amsfonts}
\usepackage{amsmath}
\usepackage{amssymb}
\usepackage{color}
\usepackage{amsthm}
\usepackage[colorlinks,linkcolor=blue,citecolor=blue]{hyperref} 
\everymath{\displaystyle}

\newcommand{\tr}{\textnormal{tr}}
\newcommand{\Ric}{\textnormal{Ric}}
\newcommand{\RR}{\textnormal{R}}

\newcommand{\kod}{\textnormal{kod}}
\newcommand{\Ker}{\textnormal{Ker}\,}
\newcommand{\rank}{\textnormal{rank}\,}
\renewcommand{\Im}{\textnormal{Im}\,}
\newcommand{\Hess}{\textnormal{Hess\,}}
\newcommand{\Rm}{\textnormal{Rm}}
\newcommand{\II}{\textnormal{II}}
\newcommand{\PT}{\textnormal{PT}}

\newcommand{\cH}{\mathcal{H}}

\newcommand{\dif}{\textnormal{d}}
\newcommand{\dbar}{\overline{\partial}}
\newcommand{\dd}{\partial}

\newcommand{\ddt}[1]{\frac{\partial #1}{\partial t}}
\newcommand{\dds}[1]{\frac{\partial #1}{\partial s}}

\newcommand{\cO}{\mathcal{O}}

\newcommand{\cV}{\mathcal{V}}

\newcommand{\ddbar}{\sqrt{-1}\partial\dbar}

\newcommand{\diam}{\textnormal{Diam}}

\newcommand{\BC}{\mathbb{C}}
\newcommand{\BR}{\mathbb{R}}
\newcommand{\BP}{\mathbb{P}}
\newcommand{\BQ}{\mathbb{Q}}
\newcommand{\BN}{\mathbb{N}}

\newtheorem{thm}{Theorem}[section]
\newtheorem{lemma}[thm]{Lemma}
\newtheorem{coro}[thm]{Corollary}
\newtheorem{prop}[thm]{Proposition}

\numberwithin{equation}{section}

\theoremstyle{definition}
\newtheorem{rmk}[thm]{Remark}

\theoremstyle{definition}
\newtheorem{defi}[thm]{Definition}

\newenvironment{sketch}{%
  \begin{proof}[Sketch of Proof]
}{%
  \end{proof}%
}

\title{Finite-Time Singularities of the K\"ahler--Ricci Flow on a $\BC P^m$-Bundle over a Product of K\"ahler--Einstein Manifolds}
\author{Yifan Xiao}
\thanks{The author is supported in part by Fundamental and Interdisciplinary Disciplines Breakthrough Plan of the Ministry of Education of China (JYB2025XDXM112).}
\address{School of Mathematical Sciences, East China Normal University, Shanghai, 200241, China}
\email{51275500015@stu.ecnu.edu.cn}

\usepackage{fancyhdr}
\begin{document}

\begin{abstract}

In this paper, we study the K\"ahler--Ricci flow on $\mathbb{CP}^m$-bundles over a product of K\"ahler--Einstein manifolds, starting from an initial metric with Calabi symmetry. We prove that every finite-time singularity arising along the flow must be of Type~I.

\end{abstract}

\maketitle
\tableofcontents

\section{Introduction}

The Ricci flow, introduced by Hamilton \cite{Ham82}, is a central tool in geometric analysis, most notably in Perelman's work on the Poincar\'e conjecture \cite{Per02,Per03a,Per03b}. Cao introduced the K\"ahler--Ricci flow in \cite{Cao85}. When the canonical bundle is not nef, this flow becomes singular in finite time, and understanding the rate and geometry of the degeneration is a basic problem in the analytic minimal model program of Song--Tian \cite{ST17}.

Let $T$ be the singular time. Following Hamilton \cite{Ham95b}, the singularity is of Type~I if $|\Rm|\leq C(T-t)^{-1}$ for some constant $C$, and of Type~II otherwise. Type~I blow-ups are closely related to shrinking gradient Ricci solitons \cite{EMT11}. Although Perelman's estimates control scalar curvature and diameter along the Fano K\"ahler--Ricci flow \cite{SeT08}, they do not in general control the full Riemann curvature tensor. Type~II examples with substantial symmetry are known \cite{GZ08,LTZ24}, so symmetry alone does not determine the singularity type.

For symmetric K\"ahler--Ricci flows, the metric can often be described by a radial function satisfying a scalar parabolic equation. Feldman--Ilmanen--Knopf \cite{FIK03} constructed rotationally symmetric shrinking and expanding gradient K\"ahler--Ricci solitons on complex line bundles over projective space, providing noncompact shrinking models for later singularity analysis. Song--Weinkove \cite{SW11} studied invariant flows on Hirzebruch surfaces and established a trichotomy: the manifold shrinks to a point, the projective-line fibers collapse and the flow converges to the base, or the exceptional divisor is contracted; they also obtained higher-dimensional analogues. Fong \cite{Fon14} extended the fiber-collapse part of this analysis to projective-line bundles over a compact K\"ahler--Einstein manifold. Under a cohomological criterion on the bundle and the initial K\"ahler class, he proved Gromov--Hausdorff convergence to the base, a Type~I curvature bound, and a blow-up model whose universal cover is the product of a flat factor and a shrinking projective line. Song \cite{Son15} treated the contraction of the exceptional divisor on the blow-up of projective space at one point and proved that the singularity is of Type~I. Guo--Song \cite{GS16} then identified the corresponding parabolic blow-up limit with the unique rotationally symmetric complete shrinking soliton constructed in \cite{FIK03}, thereby confirming the Feldman--Ilmanen--Knopf conjecture.

Jian--Song--Tian \cite{JST23} obtained estimates for general finite-time K\"ahler--Ricci flows without any symmetry assumption. They introduced the weighted Ricci potential and its minimum point, called a Ricci vertex. Their Li--Yau type gradient and Laplacian estimates give a Type~I scalar curvature bound at every point whose distance from a Ricci vertex remains bounded in the rescaled metric $(T-t)^{-1}\omega(t)$. Combined with a Type~I volume bound on a suitable tubular neighborhood, their Harnack estimates also give diameter and scalar curvature bounds for the fiber containing a Ricci vertex; they verify this volume bound for the Fano fiber bundles considered in their applications. Applied to higher-rank projective bundles over a single K\"ahler--Einstein manifold, these estimates prove that every flow with Calabi symmetry develops a Type~I singularity, both when a submanifold of higher codimension is contracted and when the fibers collapse, and they determine the corresponding smooth Type~I blow-up limits. The estimate near a Ricci vertex is the principal local analytic input in the present paper. Most recently, Fong--Tran \cite{FT26a} kept the projective-line fiber but allowed the base to be a product of K\"ahler--Einstein manifolds, using the Wang--Wang--Dancer ansatz \cite{WW98,DW11}. They proved that this form of the metric is preserved by the Ricci flow and that every finite-time singularity in the K\"ahler case is of Type~I. Their proof obtains a lower bound, in terms of the remaining time, for each coefficient multiplying a metric on a base factor. Under a hypothetical Type~II rescaling, this bound makes the O'Neill tensors \cite{O'N66} and the curvature entirely tangent to the base tend to zero, so the universal cover of the limit splits into a flat factor and a two-dimensional nonflat factor. The latter must be the cigar soliton, contradicting Perelman's noncollapsing theorem. This last two-dimensional contradiction already occurs in \cite{Fon14}; \cite{FT26a} supplies the estimates needed to carry it out when the base has several factors.

We study the singularity type of the K\"ahler--Ricci flow on the $\BC P^m$-bundle $\pi:X\to N$ over $N=N_1\times\cdots\times N_r$, starting from a Calabi-symmetric metric, where the factors $N_k$ are compact K\"ahler--Einstein manifolds. Our main result is the following.

\begin{thm}\label{thm}
Suppose that $(N_k^{n_k}, \omega_k)$, $k=1,\ldots,r$, are compact K\"ahler--Einstein manifolds satisfying $\Ric(\omega_k)=\lambda_k\omega_k$. Let $N=\prod_{k=1}^r N_k$, and let $(X,\omega_0)$, constructed in Section~\ref{ansatz}, be the $\BC P^m$-bundle over $N$ with a Calabi-symmetric K\"ahler metric. Then the K\"ahler--Ricci flow starting from $\omega_0$ develops a finite-time Type~I singularity.

\end{thm}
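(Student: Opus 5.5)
The plan has three parts: reduce the flow to a scalar parabolic equation, establish Type~I control of the radial part near a Ricci vertex, and then rule out Type~II blow-ups by a splitting argument in the spirit of \cite{Fon14,FT26a}.

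\textbf{Reduction to a radial equation.} On the complement of the zero and infinity sections, write the Calabi-symmetric metric as
\[
\omega=\sum_k a_k\,\pi^*\omega_k+\ddbar u(\rho),
\]
with $\rho=\log|\xi|^2_h$ the fiber coordinate. Here the curvature of the hermitian metric $h$ splits as a sum of multiples of the $\omega_k$. The flow preserves this form and reduces to a scalar parabolic equation for $u(\rho,t)$. The base coefficients evolve as
\[
\omega_{N_k}(t)=\bigl(a_k(t)-c_k\,u'(\rho,t)\bigr)\,\omega_k,
\]
with $a_k(t)$ affine in $t$, and the fiber coefficients are $u'$ and $u''$. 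Finiteness of $T$ follows from the cohomological computation: some class $[\omega_0]-2\pi t\,c_1(X)$ leaves the K\"ahler cone in finite time, since every fiber $\BC P^m$ has positive $c_1$. At $T$ the class may degenerate in three ways: the whole space shrinks, the fibers collapse, or a zero/infinity section (possibly together with some base factors) is contracted. I would split into cases according to which of the linear functions $a_k(T)\pm c_k\cdot(\text{range of }u')$ vanish.

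\textbf{Type~I bounds on the radial coefficients.} First I would derive, by the maximum principle applied to the PDE for $u'$ and $u''$, two-sided Type~I bounds
\[
C^{-1}(T-t)\le \frac{\text{coefficient}}{\text{its limiting value}}\ \text{or}\ \le C
\]
for each base coefficient $a_k-c_ku'$ and for $u''/u'$ near the contracted locus. Concretely, I would aim to show that each base coefficient is either bounded below by a positive constant, or comparable to $T-t$ uniformly, with the analogous statement for $u'$. Next I would use the Jian--Song--Tian estimate near a Ricci vertex, which gives a Type~I scalar curvature bound on rescaled bounded-distance regions. By the $U(m{+}1)$-symmetry, the Ricci vertex can be located on a fiber, and the relevant volume bound holds by the explicit form of $\omega$ (volume of tubular neighbourhoods is computed from $u'$ and $a_k-c_ku'$). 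This should upgrade to a Type~I bound on $|\Rm|$ on that region, via the Calabi-ansatz curvature formulas, which express $\Rm$ in terms of $u''/u'$, $u'''$ and the quotients $\lambda_k/(a_k-c_ku')$ and $c_k/(a_k-c_ku')$.

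\textbf{Ruling out Type~II.} Globally, I would argue by contradiction as in \cite{FT26a}. Suppose $\sup|\Rm|(T-t)\to\infty$, and take a Hamilton Type~II blow-up sequence $(X, Q_j\,\omega(t_j+Q_j^{-1}s), p_j)$. The lower bound on base coefficients relative to $T-t$ forces every term involving $1/(a_k-c_ku')$ to vanish after rescaling. Since $Q_j(T-t_j)\to\infty$, the O'Neill tensors and the horizontal curvature go to zero. Perelman's noncollapsing gives a smooth limit, which is an eternal, complete, $\kappa$-noncollapsed K\"ahler--Ricci flow with bounded curvature attained at the origin. Its universal cover splits as $\BC^{n}$ times the limit of the rescaled $U(m{+}1)$-invariant fiber directions. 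The fiber limit is a complete $U(m)$-invariant K\"ahler metric on $\BC^m$ (or on a line bundle over $\BC P^{m-1}$) evolving as an eternal flow with curvature attaining its maximum. By Hamilton's Harnack inequality it is a steady gradient K\"ahler--Ricci soliton. I would then show that the symmetric steady solitons are Cao's soliton or cigar-type products, which are either collapsed (contradicting noncollapsing) or incompatible with the rescaled Type~I estimate at the Ricci vertex.

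\textbf{Main obstacle.} The hardest step is this final one. When $m\ge2$ the nonflat factor is no longer two-dimensional, so the cigar argument does not apply directly. Cao's $U(m)$-invariant steady soliton on $\BC^m$ is noncollapsed, so it cannot be excluded by noncollapsing alone. My first attempt would be to exclude it using the scalar-curvature Type~I bound of \cite{JST23} together with a distance comparison. The ODE bounds show that the high-curvature point stays within bounded rescaled distance of the Ricci vertex. At the Ricci vertex the scalar curvature $R$ is $O((T-t)^{-1})$, whereas Cao's soliton would require $R\sim Q_j\gg (T-t)^{-1}$, which gives the contradiction.
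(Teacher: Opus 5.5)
There is a genuine gap, at exactly the step you identify as the main obstacle. Your resolution rests on the claim that ``the ODE bounds show that the high-curvature point stays within bounded rescaled distance of the Ricci vertex.'' Nothing you have established gives this. In the contraction case $B(T)>0$ one has $L(t)=B(t)/(T-t)\to\infty$. Since $|dy|^2_{\widetilde g}=\Psi\le Cy$, two points at levels $y_1<y_2$ satisfy $d_{\widetilde g}\ge\int_{y_1}^{y_2}\Psi^{-1/2}\,d\sigma\ge 2(\sqrt{y_2}-\sqrt{y_1})/\sqrt{C}$. So points with $y\to\infty$ are at unbounded rescaled distance from any vertex kept at bounded $y$, and no a priori estimate prevents curvature from concentrating there. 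Had your claim held, the Jian--Song--Tian estimate would already give the theorem and no blow-up analysis would be needed, so the argument is circular at its crux.

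The steps leading up to it also fail as stated. In real dimension $2m\ge4$, the Harnack inequality of Hamilton or Cao requires nonnegative curvature operator or nonnegative bisectional curvature, which you have not shown for the limit. There is also no available classification of $\kappa$-noncollapsed eternal $U(m)$-invariant K\"ahler--Ricci flows reducing everything to Cao's soliton or cigar products. Finally, a scalar-curvature bound does not bound $|\widetilde{\Rm}|$ termwise through the curvature formulas. It only controls the combination $\Psi_{yy}+2\frac{P_y}{P}\Psi_y+\frac{P_{yy}}{P}\Psi$.

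The missing idea is that a nonflat factor of dimension greater than two can only appear near $P_0$. The paper removes this case by splitting according to $y=x/(T-t)$, not by classifying solitons.

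\begin{itemize}
\item \textbf{Bounded $y$.} On $\{y\le Y\}$ the Type~I bound is proved directly.
 \begin{itemize}
 \item Choose $\theta_q\in(\Phi^*)^{-1}\zeta$ vanishing near a point of $\Phi(P_0)$, so that a Ricci vertex $p_t$ lies in $\Phi^{-1}(U_0)$.
 \item Show $y(p_t,t)$ is bounded: $U(m)$-invariance of $u$ forces $\Ric_{\widetilde g}(V,V)=1$ at the vertex for a unit $V\in\cH_0$, whereas that component is $O(1/y)$.
 \item The JST estimate then bounds $\widetilde\RR$ on $\{y\le Y\}$.
 \item Write $\widetilde\RR$ as $(PF)_{yy}=PE$ with $F=\Psi-y$ and $F(0)=F_y(0)=0$. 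Integrating from the zero section bounds $\Psi_{yy}$, $(\Psi_y-1)/y$ and $(\Psi-y)/y^2$, hence every component of $\widetilde{\Rm}$.
 \end{itemize}
\item \textbf{$y\to\infty$.} For a Type~II sequence with $y\to\infty$, not only $K_id_k\to\infty$ but also $K_ix=K_i(T-t_i)y\to\infty$, uniformly on parabolic balls. So the $\BC P^{m-1}$ directions flatten together with the base. The rank-$(2m+2n-2)$ horizontal projection becomes parallel with vanishing curvature in the limit. The nonflat factor is therefore two-dimensional, hence the cigar, which noncollapsing excludes.
\end{itemize}

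Your proposal splits off only the base directions. It never uses that the $\BC P^{m-1}$ coefficient also blows up under Type~II rescaling away from $P_0$. With the $y$-dichotomy, the $2m$-dimensional soliton case you find hardest never arises.

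In the fiber-collapse case $B(T)=0$ your Ricci-vertex route does work globally. Each fiber over $N_{S^C}$ has bounded rescaled diameter and realizes every value of $y\in[0,m+1]$. It still needs the same ODE upgrade.
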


The case $m=1$ was proved in \cite{FT26a}. When $m\geq2$, however, the fiber contains directions tangent to $\BC P^{m-1}$ in addition to the radial complex direction. This prevents a direct use of either \cite{JST23} or \cite{FT26a}. In the bundle setting considered in \cite{JST23}, the base and fiber coefficients are linked by one radial function, whereas the coefficients of the factors $N_k$ evolve independently here. Moreover, the Ricci-vertex estimate is local in the rescaled metric and controls only scalar curvature; it neither reaches points far from the zero section in the contraction case nor directly bounds the separate components of the Riemann curvature tensor. The argument of \cite{FT26a} relies on a lower bound for each coefficient multiplying a metric on a base factor. In a higher-dimensional fiber, the additional coefficient in the $\BC P^{m-1}$ directions vanishes at the zero section, so a blow-up based near that section need not reduce immediately to two nonflat dimensions.

Our proof obtains the missing componentwise curvature bounds and controls the possible extra nonflat directions by using the radial coordinate $x$ and the rescaled quantity $y=x/(T-t)$. When $y$ remains bounded, we choose the closed form in the weighted Ricci potential so that a Ricci vertex stays within bounded rescaled distance of the zero section. The estimate of \cite{JST23} then controls scalar curvature, and the explicit curvature identities and the smoothness conditions at the zero section give bounds for every component of the Riemann curvature tensor, including those involving different factors $N_k$. In the fiber-collapse case, the bounded rescaled diameter of each fiber extends this control to all points. In the contraction case, points with $y\to\infty$ require a different argument: estimates for the projection onto the base directions and for curvature entirely tangent to the base show that these directions become parallel and flat under a Type~II rescaling. The universal cover of a smooth blow-up limit then splits into a flat factor and a two-dimensional nonflat factor, so the contradiction between the cigar soliton and noncollapsing used in \cite{Fon14,FT26a} applies.

Recent work of Fong and Tran \cite{FT26b} establishes Type I behavior and identifies singularity models in a broad circle-bundle ansatz setting; their Type I theorem includes the bundles considered here. Their Type I proof derives a lower bound for fibrewise holomorphic bisectional curvature and combines splitting of hypothetical Type II limits with rigidity of noncollapsed steady Kähler–Ricci solitons with nonnegative holomorphic bisectional curvature. Meanwhile, Jian and Song \cite{JS26} recently prove global Type I curvature bounds and cylindrical tangent-flow limits for projective-line bundles whose limiting class is the pullback of a Kähler class on the base, without any symmetry assumption. The present work was developed independently of these two recent preprints. Our proof follows a different strategy: using the Ricci-vertex estimates of \cite{JST23}, we recover full curvature bounds on $y$-bounded regions and then use the argument similar as \cite{FT26a} and show that any hypothetical Type II blow-up in the complementary region splits, on its universal cover, into a flat factor and a two-dimensional factor. The latter is ruled out by the incompatibility of the cigar soliton with noncollapsing.

This paper is organized as follows. Section~\ref{pre} recalls the weighted Ricci potential, the Ricci vertex, and the estimates from \cite{JST23}. Section~\ref{ansatz} constructs the bundles and metrics, computes their connection and curvature, reduces the K\"ahler--Ricci flow to equations for the radial function and the base coefficients, and establishes the basic estimates. Section~\ref{proof} treats the regions where $y$ is bounded and unbounded, and proves Theorem~\ref{thm} in the fiber-collapse and contraction cases.

\paragraph*{\bfseries{Acknowledgement}}

The author thanks Professor Linfeng Zhou for his continued encouragement and support. He also thanks the geometry group at East China Normal University for their support and the seminar participants for helpful discussions. The author would also like to acknowledge the assistance of AI with spelling and grammar checks and with the pre-submission review. The main ideas, mathematical statements, hypotheses, references, proofs, and conclusions developed in this paper are the author's own. The author takes full responsibility for the contents of this paper.

\section{Preliminaries}\label{pre}

Jian, Song, and Tian \cite{JST23} introduced the Ricci vertex and established the estimates near it that will be used below. We recall the main definition and lemmas.

\subsection{Ricci Vertex}

Suppose that $X^n$ is a compact K\"ahler manifold with a K\"ahler metric $\omega_0$ whose class is rational, $[\omega_0]\in H^{1,1}(X,\BR)\cap H^2(X,\BQ)$. The K\"ahler--Ricci flow is given by\[\begin{cases}
\frac{\dd \omega}{\dd t}=-\Ric(\omega);\\
\omega(0)=\omega_0.
\end{cases}\]
Assume that the maximal existence time of the solution is
\[
T=\sup\{t>0: [\omega_0]-tc_1(X)>0\}<+\infty
\].

Since $[\omega_0]\in H^2(X,\BQ)$, the limiting cohomology class $\zeta=[\omega_0]-Tc_1(X)$ is semi-ample by Kawamata's base point free theorem \cite{Kaw85}. It induces a surjective holomorphic map\[
\Phi: X\to \widetilde{X}\subset \BC P^M,
\]
where $\widetilde{X}$ is a normal projective variety and $\dim \widetilde{X}=\kod\, \zeta \leq n$. Then there exists a closed $(1,1)$-form $\theta_{\widetilde{X}}$ on $\widetilde{X}$ satisfying $\zeta=[\Phi^*\theta_{\widetilde{X}}]\in H^{1,1}(X,\BR)$. For brevity, we shall abuse notation and write $\theta\in (\Phi^*)^{-1}\zeta$ to indicate $\zeta=[\Phi^*\theta]$, noting that $\Phi^*$ is generally not invertible on cohomology.

Notice that $\theta_{\widetilde{X}}$ is not unique. Smooth forms on the normal variety $\widetilde{X}$ are understood locally as restrictions of ambient smooth forms under local embeddings into some $\BC^{M'}$; thus $\theta_{\widetilde{X}}$ can be chosen in this sense.

By the $\dd\dbar$-lemma, there exists $u\in C^\infty(X\times[0,T))$ such that\[
\Ric(\omega)-\frac{1}{T-t}\omega=-\frac{1}{T-t}\Phi^*\theta_{\widetilde{X}}-\ddbar u.
\]

For normalization, we require that $\min_{X}u(\cdot,t)=1$. With this normalization, $u$ is unique and depends only on the choice of $\theta_{\widetilde{X}}$.

\begin{defi}
The function $u$ defined above is called the \textbf{weighted Ricci potential} associated with $\theta_{\widetilde{X}}$, and a minimum point (not necessarily unique) of $u(\cdot,t)$ is called the \textbf{Ricci vertex} associated with $\theta_{\widetilde{X}}$ at $t\in [0,T)$.
\end{defi}

Set $s=\log\frac{T}{T-t}$ and $\widetilde{\omega}(s)=\frac{1}{T-t}\omega(T-Te^{-s})$. Then $\widetilde{\omega}$ satisfies the normalized K\"ahler--Ricci flow\[\begin{cases}
\dds{\widetilde\omega}=-\Ric(\widetilde\omega)+\widetilde\omega;\\
\widetilde\omega(0)=\widetilde\omega_0=\frac{1}{T}\omega_0,
\end{cases}\]
and the solution $\widetilde{\omega}(s)$ exists for all $s\in [0,+\infty)$. With this change of variables, \[
[\widetilde{\omega}(s)]=c_1(X)+\frac{e^s}{T}\zeta,
\]
and the weighted Ricci potential satisfies
\[
\Ric(\widetilde{\omega})-\widetilde{\omega}=-\frac{e^s}{T}\Phi^*\theta_{\widetilde{X}}-\ddbar u.
\]

\subsection{Some Useful Estimates}

We state some useful estimates in this section. We include only sketches here; the complete proofs can be found in \cite{JST23}.

The following lemma establishes a Li--Yau type estimate for the weighted Ricci potential $u$.

\begin{lemma}
There exists $C=C(n,\omega_0,\theta_{\tilde{X}})>0$ such that\[
\frac{|\Delta u|+|\nabla u|^2}{u}\leq \frac{C}{T-t}
\]
on $X\times [0,T)$.
\end{lemma}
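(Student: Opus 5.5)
We work with the normalized flow $\widetilde\omega(s)$, where the weighted Ricci potential satisfies
\[
\Ric(\widetilde\omega)-\widetilde\omega=-\tfrac{e^s}{T}\Phi^*\theta_{\widetilde X}-\ddbar u .
\]
In this normalization the claim reads $\frac{|\Delta u|+|\nabla u|^2}{u}\le C$, uniformly in $s$; multiplying by $(T-t)^{-1}$ then recovers the stated form. We argue by the maximum principle, Li--Yau style, applied to the quotients $|\nabla u|^2/u$ and $-\Delta u/u$. The lower bound $u\ge 1$ from the normalization keeps the denominators away from zero.

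\emph{Step 1: evolution equations.} Take the trace of the defining identity to get $R-n=-\frac{e^s}{T}\tr_{\widetilde\omega}\Phi^*\theta-\Delta u$. Differentiate the identity in $s$ and use $\partial_s\Ric=-\ddbar(R)$ together with the normalized flow. This shows that $\partial_s u-\Delta u-u-\frac{e^s}{T}\tr\Phi^*\theta$ is constant in space. With the normalization $\min u=1$ and Perelman-type bounds for the normalized Fano-type flow (scalar curvature and $\mu$-functional bounds from \cite{SeT08}), this gives
\[
\partial_s u=\Delta u+u+\tfrac{e^s}{T}\tr_{\widetilde\omega}\Phi^*\theta_{\widetilde X}+a(s),\qquad |a(s)|\le C .
\]
The key structural input is that $\Phi^*\theta$ is pulled back from $\widetilde X\subset\BC P^M$. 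Since the limiting class is semi-ample, a Schwarz-lemma argument on $\widetilde\omega$ against $\theta$ gives $\frac{e^s}{T}\tr_{\widetilde\omega}\Phi^*\theta\le C$ along the normalized flow. It also gives $\frac{e^s}{T}|\Phi^*\theta|_{\widetilde\omega}\le C$ and a similar bound on the covariant derivative of $\Phi^*\theta$, which is smooth ambiently.

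\emph{Step 2: gradient bound.} Compute $(\partial_s-\Delta)|\nabla u|^2$. The Bochner formula combined with the Ricci identity turns the curvature term into $\Ric(\nabla u,\bar\nabla u)=|\nabla u|^2-\frac{e^s}{T}\Phi^*\theta(\nabla u,\bar\nabla u)-\nabla\nabla u(\nabla u,\cdot)$. These terms partially cancel against the time derivative of the metric, leaving $-|\nabla\nabla u|^2-|\nabla\bar\nabla u|^2$ plus error terms controlled by Step 1. Then apply the maximum principle to $Q=\frac{|\nabla u|^2}{u}$, or to $Q+Au$ if needed; the quotient structure is the usual Li--Yau device. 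At a maximum point the good Hessian terms dominate, and this yields $Q\le C$.

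\emph{Step 3: Laplacian bound.} From the trace identity, $-\Delta u = R-n+\frac{e^s}{T}\tr\Phi^*\theta$. So the lower bound $\Delta u\ge -C$ follows from Step 1 and the upper bound for $R$ in Perelman's estimate. For the other side we need a two-sided bound on $\Delta u/u$, and here the scalar curvature bound is not uniform. Following Cheng--Yau/Li--Yau, apply the maximum principle to $F=\frac{-\Delta u+B|\nabla u|^2}{u}$, or to a combination such as $\frac{|\nabla u|^2-\Delta u}{u}$. The $|\nabla\bar\nabla u|^2\ge\frac1n(\Delta u)^2$ term supplies the quadratic absorption. The main obstacle is handling the cross terms involving $\nabla\Phi^*\theta$ and $\tr\Phi^*\theta$ without a lower Ricci bound. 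We control these using only the pulled-back structure from Step 1 and the fact that $u\ge1$. Initial values at $s=0$ are bounded by compactness, so the constant depends only on $n$, $\omega_0$ and $\theta_{\widetilde X}$.
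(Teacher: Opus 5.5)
There is a genuine gap, and it sits at the point you call ``the main obstacle''. In Step~2, the forcing term in the evolution of $u$ puts the cross term $2\,\frac{e^s}{T}\,\mathrm{Re}\langle\nabla\tr_{\widetilde\omega}\Phi^*\theta_{\widetilde X},\nabla u\rangle$ into $(\partial_s-\Delta)|\nabla u|^2$. You handle it by asserting in Step~1 that the Schwarz lemma also bounds the covariant derivative of $\Phi^*\theta_{\widetilde X}$, ``which is smooth ambiently''. That is false. The quantities $\nabla^{\widetilde\omega}\Phi^*\theta_{\widetilde X}$ and $\nabla\tr_{\widetilde\omega}\Phi^*\theta_{\widetilde X}$ involve second derivatives of $\Phi$ measured against $\widetilde\omega$, i.e.\ the connection of $\widetilde\omega$. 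The parabolic Schwarz lemma controls $\frac{e^s}{T}\tr_{\widetilde\omega}\Phi^*\omega_{FS}$ pointwise, but not its gradient. Without that bound the maximum principle for $|\nabla u|^2/u$ does not close. Adding $Au$ does not help: $u$ is unbounded, and $(\partial_s-\Delta)u$ contains $+u$.

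The missing idea is the one in the paper's sketch. Add to $|\nabla\widehat u|^2/w$ suitable multiples of $\frac{e^s}{T}\tr_{\widetilde\omega}$ of positive projective forms dominating $\Phi^*\theta_{\widetilde X}$. The Schwarz-lemma evolution of these traces supplies a negative term of type $-|\nabla\partial\Phi|^2$, and this absorbs the cross term after Cauchy--Schwarz. The resulting quantity $H$ is reused in the Laplacian step through $\mathcal K=(K_0+C_0)/w+AH$. Your Step~3 rests on the same unjustified bound.

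Second, Perelman-type estimates are not available here. $X$ need not be Fano, and in any case $[\widetilde\omega(s)]=c_1(X)+\frac{e^s}{T}\zeta$ is not $c_1(X)$. In particular there is no a priori upper bound for $R_{\widetilde\omega}$; producing one near a Ricci vertex is exactly what this lemma is used for in Lemma~\ref{JST1.3}.

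Consequently your Step~3 has the two sides reversed. The easy side is $\Delta u\le C$. It follows from $\Delta u=n-R_{\widetilde\omega}-\frac{e^s}{T}\tr_{\widetilde\omega}\Phi^*\theta_{\widetilde X}$, the general lower bound $R_{\widetilde\omega}\ge -C$, and the Schwarz lemma. The hard side is $-\Delta u\le Cu$, essentially $R_{\widetilde\omega}\le Cu$. Your $F$ also targets this hard side, so your ``two sides'' are really one.

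Likewise, your bound on the normalizing term should come from $R\ge -C$ and the Schwarz lemma, via monotonicity of $e^{-s}(\inf_X\widehat u\pm B_0)$, and not from Perelman. Even then it holds only in a Dini sense, since $\inf_X\widehat u$ is merely Lipschitz in $s$. This is why the paper works on $[0,s_0]$ with the smooth barrier $b(s)=e^{s-s_0}a(s_0)-B$. This barrier satisfies $b\le a$, $b'=b+B$ and $|b|\le Ce^s$, so $w=\widehat u-b+1\ge1$ solves an exact equation. The paper then transfers the bound back at $s=s_0$ using $w=u+B\le(B+1)u$.
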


\begin{sketch}
It suffices to work with the normalized flow. Set $s=\log \frac{T}{T-t}$ and $\widetilde{\omega}(s)=(T-t)^{-1}\omega(t)$. Before imposing the minimum normalization, denote the corresponding weighted Ricci potential by $\widehat{u}$ and let $a(s)=\inf_X\widehat{u}(\cdot,s)$, so that the potential used in the statement is $u=\widehat{u}-a+1$. Fix $s_0>0$. The elementary monotonicity formulas for $e^{-s}(a(s)-B_0)$ and $e^{-s}(a(s)+B_0)$ imply that, for a uniform constant $B$, the function
$$
b(s)=e^{s-s_0}a(s_0)-B,\qquad 0\leq s\leq s_0,
$$
satisfies $b\leq a$, $b'=b+B$, and $|b(s)|\leq Ce^s$. Thus
$$
w=\widehat{u}-b+1
$$
is positive and bounded below by one.

Write $\mathcal{E}_s$ for the projective background term contributed by $\Phi^*\theta_{\tilde{X}}$ in the normalized Ricci-potential equation. The evolution equations for $\widehat{u}$ and
$$
K_0=-\Delta_{\widetilde{\omega}}\widehat{u}
+\operatorname{tr}_{\widetilde{\omega}}\mathcal{E}_s
=R_{\widetilde{\omega}}-n
$$
are combined with the parabolic Schwarz lemma. More precisely, after adding to $|\nabla\widehat{u}|^2/w$ suitable trace terms associated with positive projective forms dominating $\mathcal{E}_s$, one obtains a quantity $H$ satisfying
$$
\left(\partial_s-\Delta_{\widetilde{\omega}}\right)H
\leq
-c\frac{|\nabla\widehat{u}|^4}{w^3}
+C\frac{|\nabla\widehat{u}|^2}{w^2}
+\frac{2(1-\varepsilon)}{w}
\operatorname{Re}
\left\langle\nabla H,\nabla\widehat{u}\right\rangle
+Ce^{-s}.
$$
At a space-time maximum of $H$, the drift term vanishes. Since $w\leq Ce^s$, the maximum principle and the Schwarz estimate give
$$
\frac{|\nabla\widehat{u}|^2}{w}\leq C.
$$
For the Laplacian estimate, one applies the same argument to
$$
\mathcal{K}
=
\frac{K_0+C_0}{w}+A H,
$$
where $C_0$ and $A$ are chosen sufficiently large. The evolution equation for $K_0$, the preceding gradient bound, and the Schwarz lemma yield
$$
\left(\partial_s-\Delta_{\widetilde{\omega}}\right)\mathcal{K}
\leq
-c\frac{|\nabla\overline{\nabla}\widehat{u}|^2}{w}
+\frac{2}{w}
\operatorname{Re}
\left\langle\nabla\mathcal{K},\nabla\widehat{u}\right\rangle
+C.
$$
The maximum principle therefore controls $|\Delta_{\widetilde{\omega}}\widehat{u}|/w$. The opposite sign follows from the uniform scalar-curvature lower bound and the parabolic Schwarz estimate, and hence
$$
\frac{|\Delta_{\widetilde{\omega}}\widehat{u}|
      +|\nabla\widehat{u}|^2}{w}
\leq C
$$
on $X\times[0,s_0]$. At time $s_0$ one has $w=u+B\leq (B+1)u$, and $s_0$ is arbitrary. Consequently,
$$
\frac{|\Delta_{\widetilde{\omega}}u|
      +|\nabla u|_{\widetilde{\omega}}^2}{u}
\leq C.
$$
Rescaling back to $\omega(t)$ gives the stronger estimate
$$
\frac{|\Delta_{\omega(t)}u|
      +|\nabla u|_{\omega(t)}^2}{u}
\leq \frac{C}{T-t},
$$
which implies the asserted inequality.

\end{sketch}

The following lemma gives the local Type~I scalar curvature bound near the Ricci vertex.

\begin{lemma}\label{JST1.3}
For any Ricci vertex $p$ at $t\in[0,T)$ associated with $\theta_{\tilde{X}}$, there exists $C=C(n,\omega_0,\theta_{\tilde{X}})>0$ such that
\[
(T-t)|\RR(x,t)|\leq C\left(1+\frac{d^2_{\omega(t)}(x,p)}{T-t}\right)
\]
on $X\times [0,T)$.

In particular, fix $t\in[0,T)$. Then for any $\delta>0$, \[
|\RR(x,t)|\leq \frac{C(1+\delta^2)}{T-t}
\]
on $B_{\omega(t)}(p;\delta\sqrt{T-t})$, where $B_{\omega}(x;r)$ denotes the geodesic ball centered at $x\in M$ of radius $r$ with respect to the metric $\omega$.\label{JST13}
\end{lemma}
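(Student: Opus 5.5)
The plan is to derive Lemma~\ref{JST1.3} from the Li--Yau type estimate in the preceding lemma by integrating the gradient bound for $\sqrt{u}$ along a minimal geodesic from the Ricci vertex $p$, and then reading off the scalar curvature from the trace of the defining equation for $u$.

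\textbf{Step 1: growth of $u$ away from the vertex.} Fix $t$. The preceding lemma gives $|\nabla u|^2\leq \frac{C}{T-t}u$, which is the same as
\[
\bigl|\nabla \sqrt{u}\bigr|_{\omega(t)}\leq \frac{\sqrt{C}}{2\sqrt{T-t}}.
\]
Since $p$ is a minimum point of $u(\cdot,t)$, we have $u(p,t)=1$. Integrating along a minimal $\omega(t)$-geodesic from $p$ to $x$ gives
\[
\sqrt{u(x,t)}\leq 1+\frac{\sqrt{C}\,d_{\omega(t)}(x,p)}{2\sqrt{T-t}},
\qquad\text{hence}\qquad
u(x,t)\leq C'\Bigl(1+\frac{d^2_{\omega(t)}(x,p)}{T-t}\Bigr).
\]

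\textbf{Step 2: tracing the equation.} Take the trace of $\Ric(\omega)-\frac{1}{T-t}\omega=-\frac{1}{T-t}\Phi^*\theta_{\widetilde{X}}-\ddbar u$ with respect to $\omega(t)$. This gives
\[
\RR=\frac{n}{T-t}-\frac{1}{T-t}\tr_{\omega}\Phi^*\theta_{\widetilde{X}}-\Delta u .
\]
The Laplacian term is handled by the preceding lemma: $|\Delta u|\leq \frac{C}{T-t}u$. Combined with Step 1, it is bounded by $\frac{C}{T-t}\bigl(1+\frac{d^2}{T-t}\bigr)$.

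\textbf{Step 3: the background term.} This is the main obstacle, since a bound on $\tr_{\omega(t)}\Phi^*\theta_{\widetilde{X}}$ is needed. The approach is as follows.
\begin{itemize}
\item Choose a Kähler form $\omega_{\BC P^M}$ with $\omega_{\BC P^M}\geq\pm\theta_{\widetilde X}$ on the ambient space. This is possible because $\theta_{\widetilde{X}}$ is the restriction of an ambient smooth form.
\item It then suffices to bound $\tr_{\omega(t)}\Phi^*\omega_{\BC P^M}$ uniformly.
\item This uniform bound follows from the parabolic Schwarz lemma for the holomorphic map $\Phi$. The Kähler--Ricci flow has $\Ric\geq -C\omega$-type control along the flow in the sense needed: the evolution of $\log\tr_\omega\Phi^*\omega_{\BC P^M}$ is bounded above by $C\tr_\omega\Phi^*\omega_{\BC P^M}$ with $C$ depending on the bisectional curvature of $\omega_{\BC P^M}$. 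Combining this with a potential $\varphi$ satisfying $\omega=\omega_0-t\Ric(\omega_0)+\ddbar\varphi$, which has uniformly bounded $\varphi$ because $\zeta$ is semi-ample, the maximum principle gives $\tr_{\omega(t)}\Phi^*\omega_{\BC P^M}\leq C$.
\end{itemize}
Hence the background term is $O\bigl(\frac{1}{T-t}\bigr)$. This is the same Schwarz estimate already invoked in the sketch of the preceding lemma, so it may be quoted from there.

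\textbf{Conclusion.} Combining Steps 2 and 3 gives
\[
(T-t)|\RR(x,t)|\leq C\Bigl(1+\frac{d^2_{\omega(t)}(x,p)}{T-t}\Bigr).
\]
The ``in particular'' statement follows by substituting $d\leq \delta\sqrt{T-t}$ on the ball $B_{\omega(t)}(p;\delta\sqrt{T-t})$.

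The constants depend only on $n$, $\omega_0$ and $\theta_{\widetilde{X}}$:
\begin{itemize}
\item the constant from the preceding lemma;
\item the bisectional curvature bound of $\omega_{\BC P^M}$;
\item the $C^0$ bound on $\varphi$.
\end{itemize}
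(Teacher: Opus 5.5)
Your proposal is correct and follows the paper's sketch step for step. You use $u(p,t)=1$ at the vertex, integrate $|\nabla\sqrt u|\le C(T-t)^{-1/2}$ along a minimal geodesic, trace the defining equation, and control $\Delta u$ by the Li--Yau estimate and $\tr_{\omega}\Phi^*\theta_{\widetilde X}$ by the parabolic Schwarz lemma; the only difference is that you work with $\omega(t)$ rather than the normalized $\widetilde\omega$.

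One caution concerns your sketch of the Schwarz bound, which does not work as written. With reference form $\omega_0-t\Ric(\omega_0)$ and only a $C^0$ bound on $\varphi$, the term $-A\partial_t\varphi$ is unbounded wherever volume collapses. The standard argument instead uses $\hat\omega_t=\frac{T-t}{T}\omega_0+\frac{t}{T}\chi$, with $\chi\in\zeta$ a multiple of $\Phi^*\omega_{\BC P^M}$, together with the quantity $(T-t)\partial_t\varphi+\varphi$, whose heat operator equals $\tr_\omega\chi-n$. Since you, like the paper, may simply quote this estimate, the proof stands.
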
 

\begin{sketch}

Work again with the normalized metric $\widetilde{\omega}(s)=(T-t)^{-1}\omega(t)$. Let $p$ be a Ricci vertex at time $t$. By the chosen normalization, $u(p,t)=1$. The gradient estimate above gives
$$
|\nabla\sqrt{u}|_{\widetilde{\omega}}
=
\frac{|\nabla u|_{\widetilde{\omega}}}{2\sqrt{u}}
\leq C.
$$
Integrating this inequality along a minimizing $\widetilde{\omega}(s)$-geodesic from $p$ to $x$ yields
$$
\sqrt{u(x,t)}
\leq
1+C\,d_{\widetilde{\omega}(s)}(x,p),
$$
and therefore
$$
u(x,t)
\leq
C\left(1+d_{\widetilde{\omega}(s)}^2(x,p)\right).
$$

Taking the trace of the defining equation for the weighted Ricci potential gives
$$
R_{\widetilde{\omega}}
=
n-\Delta_{\widetilde{\omega}}u
-\operatorname{tr}_{\widetilde{\omega}}
\left((T-t)^{-1}\Phi^*\theta_{\tilde{X}}\right).
$$
The parabolic Schwarz lemma uniformly controls the last term, while the Laplacian estimate gives $|\Delta_{\widetilde{\omega}}u|\leq Cu$. It follows that
$$
|R_{\widetilde{\omega}}(x,s)|
\leq
C\left(1+d_{\widetilde{\omega}(s)}^2(x,p)\right).
$$
Finally, since
$$
R_{\widetilde{\omega}}=(T-t)R_{\omega(t)},
\qquad
d_{\widetilde{\omega}(s)}=(T-t)^{-1/2}d_{\omega(t)},
$$
we obtain
$$
(T-t)|R(x,t)|
\leq
C\left(
1+\frac{d_{\omega(t)}^2(x,p)}{T-t}
\right).
$$
In particular, if $x\in B_{\omega(t)}(p;\delta\sqrt{T-t})$, then
$$
|R(x,t)|
\leq
\frac{C(1+\delta^2)}{T-t}.
$$

\end{sketch}

\section{Ansatz}\label{ansatz}

\subsection{$\BC P^m$-Bundle and Calabi-Symmetric Metrics}

Let $L_k$ be a holomorphic line bundle over $N_k$ satisfying
\[
c_1(L_k)=-q_k[\omega_k], \quad q_k\in \BQ\setminus\{0\}
\]
for each $k$, and let $L=\bigotimes\limits_{k=1}^r \pi_{N_k}^*L_k$, where $\pi_{N_k}:N\to N_k$ is the projection. Choose Hermitian metrics $h_k$ on $L_k$ such that the Chern curvature satisfies
\[
F_{h_k}(L_k)=-\ddbar \log h_k=-q_k\omega_k.
\]
Then
\[
\pi: X=\BP(\cO_{N}\oplus L^{\oplus m})\to N
\]
is a $\BC P^m$-bundle over $N$. In this paper, we assume that $m\geq 2$. Denote $n=\dim N=\sum_{k=1}^r n_k$.

Define the Hermitian metric $h=\bigotimes\limits_{k=1}^r \pi_{N_k}^*h_k$ on $L$ and the radial coordinate function
\[
\rho=\log (h(z)|\xi|^2),
\]
where $z$ denotes local holomorphic coordinates on $N$ and $\xi=(\xi^1, \cdots, \xi^m)$ denotes fiber coordinates on $L^{\oplus m}$.

We write a Calabi $U(m)$-symmetric K\"ahler metric \cite{Cal82} in the form
\begin{equation}
\omega=\sum_{k=1}^r a_k\omega_k+\ddbar \varphi(\rho),\label{calabi}
\end{equation}
where $a_k\in \BR$ and $\varphi\in C^{\infty}(\BR)$ satisfy certain conditions. Notice that the form $\omega$ defined above is always closed.

Let \(P_0=\BP(\cO_N)\) be the zero section and \(D_\infty=\BP(L^{\oplus m})\) the divisor at infinity in $X$. Then there is a natural free $S^1$-action \((z,\xi)\mapsto (z,e^{i\theta}\xi)\) on $X_0=X\setminus(P_0\cup D_\infty)$, and
\[
(X\setminus(P_0\cup D_\infty))/S^1\cong I\times(\BC P^{m-1}\times N),
\]
where $I$ is an open interval. This action induces a principal $S^1$-bundle structure over $I\times(\BC P^{m-1}\times N)$, and there is a connection $1$-form $\eta$ on this principal $S^1$-bundle satisfying
\begin{equation}
\dif \eta=-\omega_{FS}-\sum_{k=1}^r q_k\omega_k=-\ddbar \rho,\label{eta}
\end{equation}
where $\omega_{FS}$ is the Fubini--Study form on $\BC P^{m-1}$. Indeed, $\eta$ is induced by the connection of the Hermitian metric $h$ on $L^{\oplus m}$, which is locally written as
\[
\nabla \xi^\alpha =\dif \xi^\alpha+h^{-1}\dd h\xi^\alpha.
\]

Let $H=\cO_X(1)=\cO_X(D_\infty)$. Then the K\"ahler class is given by
\begin{equation}
[\omega]=Bc_1(H)+\pi^*\sum_{k=1}^r a_k[\omega_k],\label{class}
\end{equation}
If $B,a_k\in \BQ$, then $[\omega]\in H^2(X,\BQ)$.

Set
\[
x=\varphi_\rho, \Theta=x_\rho=\varphi_{\rho\rho}, d_k(x)=a_k+q_kx, 
\]
Then the Riemannian metric associated with $\omega$ is given by
\[
g=\frac{1}{\Theta}\dif x^2+\Theta \eta^2+xg_{FS}+\sum_{k=1}^r d_k(x)g_k,
\]
where $g_k$ and $g_{FS}$ are the Riemannian metrics associated with $\omega_k$ and $\omega_{FS}$, respectively.

The following criterion is due to Calabi \cite{Cal82}.

\begin{prop}
$\omega$ as defined above is a K\"ahler metric if and only if

$(1)$ Positivity condition: $a_k,d_k(x)>0$ for each $k=1,\cdots, r$, and $x=\varphi_\rho>0, \Theta=x_{\rho}=\varphi_{\rho\rho}>0$ for all $\rho\in(-\infty,+\infty)$;

$(2)$ Closing condition at $P_0$: $\psi_0(\sigma)=\varphi(\log \sigma)$ extends smoothly to $\sigma=0$ and $\psi_0'(0)>0$.

$(3)$ Closing condition at $D_\infty$: $B=\lim\limits_{\rho\to \infty} x>0$ is finite, and $\psi_\infty(\sigma)=\varphi(-\log \sigma)+B\log \sigma$ extends smoothly to $\sigma=0$ and $\psi_\infty'(0)>0$.

In particular, if $\omega$ is a K\"ahler metric, then $(1)$ implies that

$(4)$ Endpoint condition: \(b_k=d_k(B)=a_k+q_kB>0\);\\
and $(2)$ and $(3)$ imply that

$(5)$ Boundary conditions: \(\Theta(0)=\Theta(B)=0; \Theta_x(0)=1,  \Theta_x(B)=-1.\)\label{kahler}
\end{prop}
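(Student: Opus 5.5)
The statement to prove is Proposition~\ref{kahler}, Calabi's criterion. I would work in the local holomorphic coordinates $(z,\xi)$ on the open set $X_0=X\setminus(P_0\cup D_\infty)$ first, and then treat the two ends separately by changing variables.

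\textbf{Positivity on $X_0$.} Here $\rho$ is a smooth function, and $\ddbar\rho=\omega_{FS}+\sum_k q_k\omega_k$ by \eqref{eta}. Expanding gives
\[
\ddbar\varphi(\rho)=\varphi_\rho\,\ddbar\rho+\varphi_{\rho\rho}\sqrt{-1}\,\partial\rho\wedge\dbar\rho ,
\]
so that
\[
\omega=\sum_k(a_k+q_kx)\omega_k+x\,\omega_{FS}+\Theta\sqrt{-1}\,\partial\rho\wedge\dbar\rho .
\]
The form $\partial\rho$ restricted to the fiber direction $\xi$ (the Euler direction) is nonzero and independent of the horizontal and $\BC P^{m-1}$ directions. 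More precisely, use the splitting of $T^{1,0}X_0$ given by the connection $\eta$: horizontal lifts of $T N$, the tangent directions of $\BC P^{m-1}$, and the radial complex line. In this splitting $\omega$ is block diagonal with blocks $d_k(x)\omega_k$, $x\omega_{FS}$ and $\Theta\sqrt{-1}\partial\rho\wedge\dbar\rho$. This is the Riemannian form $g$ written above. Hence $\omega>0$ on $X_0$ if and only if $d_k(x)>0$, $x>0$ and $\Theta>0$ for all $\rho$. Once $x$ is increasing with limits $0$ and $B$, positivity of $d_k$ on the whole range is equivalent to positivity at the endpoints, namely $a_k>0$ and $b_k>0$. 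This gives (1) and, from it, (4).

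\textbf{Extension across $P_0$.} Near $P_0$ use the coordinates $w=\xi\in\BC^m$ (with $h$ absorbed), so $\sigma=e^\rho=h|\xi|^2$ is smooth and vanishes exactly on $P_0$. Since $\varphi(\rho)=\psi_0(\sigma)$, $\omega$ extends smoothly across $P_0$ when $\psi_0$ is smooth at $0$. At $\sigma=0$ we get $\ddbar\psi_0(\sigma)=\psi_0'(0)\ddbar\sigma$, and this is positive in the fiber directions exactly when $\psi_0'(0)>0$. The base part $\sum a_k\omega_k$ is positive there because $x\to0$. Conversely, smoothness and positivity of $\omega$ restricted to a fiber $\BC^m$, which carries a $U(m)$-invariant potential, force $\psi_0$ to be smooth at $0$ with positive derivative; this is the standard fact that smooth radial functions are smooth functions of $|w|^2$. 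This step is the main delicate point: the potential is determined only up to pluriharmonic terms, so one should argue with $x=\sigma\psi_0'(\sigma)$ instead. Then $x$ is a smooth function of $\sigma$ vanishing at $0$, and $\Theta=\sigma\,dx/d\sigma$.

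\textbf{Extension across $D_\infty$.} Use the chart $\tilde\xi=1/\xi$ and set $\sigma=e^{-\rho}$. Then $\varphi(\rho)=\psi_\infty(\sigma)-B\log\sigma$, and $-B\ddbar\log\sigma$ differs from $B\ddbar\rho$ by a form representing $Bc_1(H)$ that is smooth on that chart. The same argument as at $P_0$ gives (3), with $x\to B$ finite. This is consistent with the class \eqref{class}.

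\textbf{Boundary conditions.} These follow by viewing $\Theta$ as a function of $x$. Near $P_0$, write $x=\sigma f(\sigma)$ with $f(0)=\psi_0'(0)>0$. Then $\Theta=\sigma x_\sigma$, and
\[
\frac{d\Theta}{dx}=\frac{\sigma\,(x_\sigma+\sigma x_{\sigma\sigma})}{x_\sigma}\ \text{ evaluated via }\ \frac{d\Theta}{d\rho}\Big/\frac{dx}{d\rho}=\frac{\Theta_\rho}{\Theta}.
\]
A Taylor expansion in $\sigma$ gives $\Theta\to0$ and $\Theta_x\to1$ as $x\to0$. The symmetric computation at $D_\infty$, where $\rho\to+\infty$ and $\sigma=e^{-\rho}$ reverses the sign, gives $\Theta(B)=0$ and $\Theta_x(B)=-1$. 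This establishes (5).
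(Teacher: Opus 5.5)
The paper gives no proof of this proposition; it quotes the criterion from Calabi \cite{Cal82}. Your outline is the standard argument. The ``if'' direction (smoothness and positivity of $\omega$ across $P_0$ and $D_\infty$, given (1)--(3)) is fine, and so is the derivation of (5).

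The gap is in the ``only if'' direction at $P_0$, exactly where you said the argument is delicate. The sentence ``then $x$ is a smooth function of $\sigma$ vanishing at $0$'' carries the whole converse, and you assert it without proof.

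\textbf{Smoothness of $x$.} You need a reason. $x$ is the Hamiltonian of the $S^1$-action: up to sign convention, $\iota_V\omega=-\dif x$ on $X_0$, where $V$ generates the action. So if $\omega$ extends smoothly, so does $x$. Since $x$ is $U(m)$-invariant on fibers, it equals $\chi(\sigma)$ with $\chi$ smooth.

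\textbf{Vanishing of $\chi(0)$.} You cannot read $\chi(0)=0$ off from $x=\sigma\psi_0'(\sigma)$, because boundedness of $\psi_0'$ is what you are trying to prove. This step needs $m\ge 2$. Take a real vertical vector $v$ whose $(1,0)$-part is Hermitian-orthogonal to $\xi$; such $v\neq0$ exist because $m\ge2$. Then $\dd\rho(v)=0$, and in a local trivialization $g(v,v)=c\,x|v|^2/|\xi|^2$ for a universal $c>0$. Boundedness of $g$ near $P_0$ therefore forces $x=O(|\xi|^2)$.

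Without this input the claim is false. For $m=1$, take $\varphi=x_0\rho+\psi(e^\rho)$ with $x_0>0$ and $\psi$ smooth. For suitable data this gives the smooth K\"ahler form $\sum_k(a_k+q_kx_0)\omega_k+\ddbar\psi(\sigma)$, yet $\psi_0=x_0\log\sigma+\psi$ is not smooth at $0$.

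Once $\chi(0)=0$, $\psi_0'=\chi(\sigma)/\sigma$ extends smoothly, and $\psi_0'(0)=\chi'(0)>0$ by positivity of $\omega$ in the fiber directions at $P_0$. At $D_\infty$ no analogous input is needed. $B:=x|_{D_\infty}$ is finite because $x$ is smooth on the compact $X$. Also $\psi_\infty'(\sigma)=(B-x)/\sigma$, and $B-x$ vanishes at $\sigma=0$ by the definition of $B$.

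There are three smaller slips.
\begin{itemize}
\item Your displayed formula for $d\Theta/dx$ has a spurious factor $\sigma$, which would give $\Theta_x\to0$. The correct expression, which is what $\Theta_\rho/\Theta$ yields, is $(x_\sigma+\sigma x_{\sigma\sigma})/x_\sigma\to 1$.
\item The forms $-B\ddbar\log\sigma$ and $B\ddbar\rho$ are identical, since $\log\sigma=-\rho$. What you actually need is the following: in a chart with $D_\infty=\{w=0\}$ one has $\sigma=|w|^2e^{\phi}$ with $\phi$ smooth, so $B\ddbar\rho=-B\ddbar\phi$ extends smoothly even though its potential does not.
\item Positivity of $d_k$ on the open range $0<x<B$ gives only $a_k,b_k\ge 0$. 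Strict positivity comes from $\omega|_{P_0}=\sum_k a_k\omega_k$ and $\omega|_{D_\infty}=\sum_k b_k\omega_k+B\omega_{FS}$. So (4) also uses the closing condition at $D_\infty$, not (1) alone.
\end{itemize}
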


In fact, $x\in [0,B]$ describes the "distance" to the zero section $P_0$. Precisely,
\[
P_0=\{x=0\},
\qquad
D_\infty=\{x=B\},
\quad
X_0=\{0<x<B\}.
\]

The connection form \(\eta\) determines the horizontal splitting and induces the orthogonal decomposition on $TX_0$. Define 
\[
\cV=
\operatorname{span}\{\partial_x,\varsigma\}, \cH=\cV^{\perp_g}.
\]
Here $\varsigma$ is the Reeb vector field of $\eta$ satisfying $\eta(\varsigma)=1$.

Denote by \[
\pi_0: X_0\to \BC P^{m-1},\,\pi_k:X_0\to N_k,\, k=1,\cdots,r\]
the projections. Then $\cH$ decomposes as
\[
\cH=\mathcal H_0\oplus\bigoplus_{k=1}^{r}\mathcal H_k,
\]
where \(\mathcal H_0=(\Ker(\pi_0)_*)^{\perp_g}\) is the horizontal lift of
\(T\mathbb{CP}^{m-1}\), while \(\mathcal H_k=(\Ker(\pi_k)_*)^{\perp_g}\) is the
horizontal lift of \(TN_k\). 
Finally
\[
TX_0=\mathcal V\oplus\bigoplus_{k=0}^{r}\mathcal H_k.
\]
It is easy to check that this orthogonal decomposition is \(J\)-invariant, where $J$ is the complex structure on $X$.

\subsection{Computation of Geometric Quantities}

The first Chern class is given by\begin{equation}
c_1(X)=(m+1)c_1(H)+\pi^*\sum_{k=1}^r(\lambda_k-mq_k)[\omega_k].\label{chern}
\end{equation}

Denote $p(x)=x^{m-1}\left(\prod_{k=1}^r d_k^{n_k}\right)$. Straightforward computation shows that the volume form induced by $\omega$ is given by\begin{align*}
\frac{\omega^{m+n}}{(m+n)!}&=p\Theta(\sqrt{-1}\dd\rho\wedge\dd\bar{\rho})\wedge \frac{\omega_{FS}^{m-1}}{(m-1)!}\wedge\left(\bigwedge_{k=1}^r \frac{\omega_k^{n_k}}{n_k!} \right)\\
&=h^m e^{-m\rho}p\Theta\left(\bigwedge_{\alpha=1}^{m}\sqrt{-1}\,\dif\xi^\alpha\wedge \dif\bar\xi^\alpha\right)\wedge\prod_{k=1}^{r}\frac{\omega_k^{n_k}}{n_k!},
\end{align*}

Then the Ricci form is given by

\[
\begin{aligned}
\operatorname{Ric}(\omega)
&=-\sqrt{-1}\partial\bar\partial\log\left(h^m e^{-m\rho}p\Theta\prod_{k=1}^{r}\det g_k\right)\\
&=-\sqrt{-1}\partial\bar\partial\log(p\Theta)+m\sqrt{-1}\,\partial\bar\partial\rho-m\sqrt{-1}\,\partial\bar\partial\log h+\sum_{k=1}^{r}\operatorname{Ric}(\omega_k)\\
&=-\sqrt{-1}\partial\bar\partial\log(p\Theta)+m\omega_{\mathrm{FS}}+\sum_{k=1}^{r}\lambda_k\omega_k\\
&=-\ddbar(\log{p\Theta}-m\rho)+\sum_{k=1}^{r} (\lambda_k-mq_k)\omega_k,
\end{aligned}
\]
where the last equality follows from \eqref{eta}.

By taking the trace of the Ricci form, we obtain the scalar curvature\[\begin{aligned}
\RR&=-\Delta_{\omega} \log(p\Theta)+m\tr_\omega(\omega_{FS})+\sum_{k=1}^r\lambda_k\tr_\omega(\omega_k)\\
&=-\frac{(p\Theta)_{xx}}{p}+\sum_{k=1}^r\frac{n_k\lambda_k}{d_k}+\frac{m(m-1)}{x}.\\
\end{aligned}\]

We next compute the Riemann curvature tensor and the O'Neill tensors associated with the submersion $\widetilde\pi:X_0\to \BC P^{m-1}\times N$. We first introduce an orthonormal coframe to simplify the curvature formulas.

In the remainder of this subsection, we adopt the conventions
\[
(N_0,\omega_0)=(\BC P^{m-1},\omega_{FS}),\quad q_0=1,\quad n_0=m-1,\quad d_0=x.
\]
Here \(\omega_0\) is static and should not be confused with the initial metric of the K\"ahler--Ricci flow.

For $k=0,\cdots,r$, let $J^k$ be the complex structure of $N_k$ and let $\{\bar{\theta}^{k,\alpha}\}_{\alpha=1}^{2n_k}$ be a local orthonormal coframe such that $\omega_k=\frac{1}{2}J^k_{\alpha\beta}\bar{\theta}^{k,\alpha}\wedge\bar{\theta}^{k,\beta}$. Define the $s$-coordinate by $\dif s=\frac{1}{\sqrt{\Theta}}\dif x$. Then the coframe $\{\theta^s, \theta^\eta, \theta^{k,\alpha}: k=0,\cdots,r; \alpha=1,\cdots,2n_k\}$ defined by\[
\theta^s=\dif s,\, \theta^\eta=\sqrt{\Theta}\eta,\, \theta^{k,\alpha}=\sqrt{d_k}\,\bar{\theta}^{k,\alpha}
\] is a local orthonormal coframe for $g$. Denote by $\{e_s,e_\eta,e_{k,\alpha}: k=0,\cdots,r; \alpha=1,\cdots,2n_k\}$ the local frame dual to this coframe.

We denote by $\varpi=\{\varpi_{AB}\}$ and $\Omega=\{\Omega_{AB}\}$ the connection $1$-form and curvature $2$-form, respectively. They are defined by Cartan's structure equations\[
\dif \theta^A=-\varpi_{AB}\wedge \theta^{B},\quad\Omega_{AB}=\dif\varpi_{AB}+\varpi_{AC}\wedge\varpi_{CB}=\frac{1}{2}\RR_{ABCD}\theta^{C}\wedge\theta^{D},
\]
where the indices $A,B,C,D$ take values in $\{s,\eta\}\cup\{(k,\alpha):k=0,\cdots,r;\alpha=1,\cdots,2n_k\}$.

\begin{prop}\label{connection}
For all $k,j=0,\cdots,r$, with the Greek indices ranging over the dimensions corresponding to their factor indices,
\[
\varpi_{\eta s}=\frac{\Theta_x}{2\sqrt{\Theta}}\theta^{\eta},\quad \varpi_{(k,\alpha) s}=\frac{q_k\sqrt{\Theta}}{2d_k}\theta^{k,\alpha},\quad \varpi_{\eta (k,\alpha)}=-\frac{q_k\sqrt{\Theta}}{2d_k}J^k_{\alpha\beta}\theta^{k,\beta},
\]\[
\varpi_{(k,\alpha)(k,\beta)}=\varpi^{k}_{\alpha\beta}+\frac{q_k\sqrt{\Theta}}{2d_k}J^k_{\alpha\beta}\theta^{\eta},
\]
where $\varpi^k$ is the connection $1$-form on $(N_k,\omega_k)$. All other cases are either zero or determined by skew-symmetry.
\end{prop}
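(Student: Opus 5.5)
We verify the connection forms with Cartan's first structure equation $\dif\theta^A=-\varpi_{AB}\wedge\theta^B$. By uniqueness of the torsion-free metric connection, it suffices to exhibit a skew-symmetric matrix of $1$-forms satisfying these equations. The plan is to compute $\dif\theta^A$ for every element of the coframe, then check that the proposed $\varpi$ reproduces each of them.

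The first step is to compute the differentials. Since $s$ is a function of $x$ alone, $\dif\theta^s=0$. For $\theta^\eta=\sqrt{\Theta}\,\eta$, with $\Theta$ a function of $x$, we have $\dif\sqrt{\Theta}=\frac{\Theta_x}{2\sqrt{\Theta}}\sqrt{\Theta}\,\dif s=\frac{\Theta_x}{2}\theta^s$. Combining this with \eqref{eta} and the convention $q_0=1$, $\omega_0=\omega_{FS}$ gives
\[
\dif\theta^\eta=\frac{\Theta_x}{2\sqrt{\Theta}}\theta^s\wedge\theta^\eta-\sqrt{\Theta}\sum_{k=0}^r q_k\omega_k
=\frac{\Theta_x}{2\sqrt{\Theta}}\theta^s\wedge\theta^\eta-\sum_{k=0}^r\frac{q_k\sqrt{\Theta}}{2d_k}J^k_{\alpha\beta}\theta^{k,\alpha}\wedge\theta^{k,\beta}.
\]
Here we used $\omega_k=\frac12 J^k_{\alpha\beta}\bar\theta^{k,\alpha}\wedge\bar\theta^{k,\beta}=\frac{1}{2d_k}J^k_{\alpha\beta}\theta^{k,\alpha}\wedge\theta^{k,\beta}$. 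For the base directions, $(d_k)_x=q_k$, so $\dif\sqrt{d_k}=\frac{q_k\sqrt{\Theta}}{2\sqrt{d_k}}\theta^s$. The pulled-back coframe satisfies $\dif\bar\theta^{k,\alpha}=-\varpi^k_{\alpha\beta}\wedge\bar\theta^{k,\beta}$, which yields
\[
\dif\theta^{k,\alpha}=\frac{q_k\sqrt{\Theta}}{2d_k}\theta^s\wedge\theta^{k,\alpha}-\varpi^k_{\alpha\beta}\wedge\theta^{k,\beta}.
\]

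The second step is to check the proposed forms against these formulas, using skew-symmetry $\varpi_{s\eta}=-\varpi_{\eta s}$ and similar relations.

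For $A=s$, we get $-\varpi_{s\eta}\wedge\theta^\eta-\sum\varpi_{s(k,\alpha)}\wedge\theta^{k,\alpha}=\frac{\Theta_x}{2\sqrt\Theta}\theta^\eta\wedge\theta^\eta+\sum\frac{q_k\sqrt\Theta}{2d_k}\theta^{k,\alpha}\wedge\theta^{k,\alpha}=0$, as required.

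For $A=\eta$, the $\varpi_{\eta s}\wedge\theta^s$ term gives $\frac{\Theta_x}{2\sqrt\Theta}\theta^s\wedge\theta^\eta$. The term $-\varpi_{\eta(k,\alpha)}\wedge\theta^{k,\alpha}=\frac{q_k\sqrt\Theta}{2d_k}J^k_{\alpha\beta}\theta^{k,\beta}\wedge\theta^{k,\alpha}$ reproduces the curvature term once we use the antisymmetry of $J^k$.

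For $A=(k,\alpha)$, the $s$-part gives $\frac{q_k\sqrt\Theta}{2d_k}\theta^s\wedge\theta^{k,\alpha}$. The $\eta$-part is $-\varpi_{(k,\alpha)\eta}\wedge\theta^\eta=-\frac{q_k\sqrt\Theta}{2d_k}J^k_{\alpha\beta}\theta^{k,\beta}\wedge\theta^\eta$. This must cancel the extra piece from the $(k,\beta)$-part, namely $-\frac{q_k\sqrt\Theta}{2d_k}J^k_{\alpha\beta}\theta^\eta\wedge\theta^{k,\beta}$. Since $\theta^\eta\wedge\theta^{k,\beta}=-\theta^{k,\beta}\wedge\theta^\eta$, the two pieces indeed cancel, leaving $-\varpi^k_{\alpha\beta}\wedge\theta^{k,\beta}$ as required.

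Mixed blocks $\varpi_{(k,\alpha)(j,\beta)}$ with $k\neq j$ are set to zero, and this is consistent because $\dif\theta^{k,\alpha}$ contains no terms involving $\theta^{j,\beta}$.

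The main obstacle is sign bookkeeping. The conventions in \eqref{eta}, the sign of $J$ relative to $\omega_k$, and the orientation of $\eta$ must be matched so that the $J$-terms in the $\eta$ and $(k,\alpha)$ equations cancel simultaneously. I would first fix these using the $\eta$-equation and then confirm that the same sign choice makes the $(k,\alpha)$-equation close. Once this is done, skew-symmetry of $\varpi_{(k,\alpha)(k,\beta)}$ holds because both $\varpi^k$ and $J^k$ are antisymmetric, and uniqueness finishes the proof.
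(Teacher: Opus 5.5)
Your proposal is correct and follows the paper's route: you compute $\dif\theta^s$, $\dif\theta^\eta$ and $\dif\theta^{k,\alpha}$ exactly as the paper does, then check Cartan's first structure equation, making explicit both the term-by-term verification and the uniqueness of the skew-symmetric, torsion-free solution, which the paper compresses into ``a direct computation.'' Your sign checks already close with the paper's conventions (in particular, the two $J^k_{\alpha\beta}\theta^\eta$ terms in the $(k,\alpha)$-equation cancel), so the closing hedge about sign bookkeeping is unnecessary.
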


\begin{proof}
Define $H=\sqrt{\Theta}$ and $f_k=\sqrt{d_k}$. Then

$(1)$ $\dif\theta^{s}=\dif^2 s=0$;

$(2)$ \(\dif\theta^\eta=\dif(H\eta)=H'\dif s\wedge \eta+H\dif \eta=\frac{\Theta_x}{2\sqrt{\Theta}}\theta^s\wedge\theta^\eta-\sum_{k=0}^r \frac{q_k\sqrt{\Theta}}{2d_k}J^k_{\alpha\beta}\theta^{k\alpha}\wedge\theta^{k\beta};\)

$(3)$ \(\dif \theta^{k,\alpha}=\dif (f_k\bar\theta^{k,\alpha})=f'_k\dif s\wedge \bar\theta^{k,\alpha}+f_k\dif\bar\theta^{k,\alpha}=\frac{q_k\sqrt{\Theta}}{2{d_k}}\theta^s\wedge\theta^{k,\alpha}-\varpi^k_{\alpha\beta}\wedge\theta^{k,\beta}.$

A direct computation using Cartan's structure equations gives the conclusion.
\end{proof}

A straightforward computation using Cartan's structure equations gives the curvature $2$-forms and hence the Riemann curvature tensor.

\begin{coro}\label{Rm}
For all $k,j=0,\cdots,r$, with the Greek indices ranging over the dimensions corresponding to their factor indices,
\begin{align*}
\Omega_{\eta s}&=\frac{1}{2}\Theta_{xx}\theta^s\wedge\theta^\eta+\sum_{k=0}^{r}P_kJ^k_{\alpha\beta}\theta^{k,\alpha}\wedge\theta^{k,\beta},\\
\Omega_{(k,\alpha) s}&=P_k(-\theta^s\wedge\theta^{k,\alpha}+J^k_{\alpha\beta}\theta^\eta\wedge\theta^{k,\beta}),\\
\Omega_{\eta(k,\alpha)}&=P_k(\theta^\eta\wedge\theta^{k,\alpha}+J^k_{\alpha\beta}\theta^s\wedge\theta^{k,\beta}),\\
\Omega_{(k,\alpha)(k,\beta)}&=\frac{1}{2d_k}\RR^k_{\alpha\beta\gamma\delta}\theta^{k,\gamma}\wedge\theta^{k,\delta}-2P_kJ^k_{\alpha\beta}\theta^s\wedge\theta^\eta\\
&-Q_{kk}(\theta^{k,\alpha}\wedge\theta^{k,\beta}+J^k_{\alpha\gamma}J^k_{\beta\delta}\theta^{k,\gamma}\wedge\theta^{k,\delta})-\sum_{j=0}^r Q_{kj} J^k_{\alpha\beta}J^j_{\gamma\delta}\theta^{j,\gamma}\wedge\theta^{j,\delta}\\
\Omega_{(k,\alpha)(j,\beta)}&=-Q_{kj}(\theta^{k,\alpha}\wedge\theta^{j,\beta}+J^k_{\alpha\gamma}J^j_{\beta\delta}\theta^{k,\gamma}\wedge\theta^{j,\delta}) \, (\text{Here } k\neq j),\\
\end{align*}
where $P_k=\frac{q_k^2\Theta}{4d_k^2}-\frac{q_k\Theta_x}{4d_k}$, $Q_{kj}=\frac{q_kq_j\Theta}{4d_kd_j}$, and $\RR^k$ is the Riemann curvature tensor of $(N_k,g_k)$. All other cases are zero.
\end{coro}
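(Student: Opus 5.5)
The plan is to derive the curvature $2$-forms directly from the connection forms of Proposition~\ref{connection} via Cartan's second structure equation $\Omega_{AB}=\dif\varpi_{AB}+\varpi_{AC}\wedge\varpi_{CB}$. The computation is organized by block type: vertical--vertical ($\eta s$), mixed vertical--horizontal ($(k,\alpha)s$ and $\eta(k,\alpha)$), same-factor horizontal ($(k,\alpha)(k,\beta)$), and cross-factor horizontal ($(k,\alpha)(j,\beta)$ with $k\neq j$).

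\textbf{Tools for differentiating coefficients.} Every coefficient in $\varpi$ is a function of $x$ alone, so for such a function $F$ we use
\[
\dif F=F_x\sqrt{\Theta}\,\theta^s .
\]
Combined with $\dif\theta^\eta$ and $\dif\theta^{k,\alpha}$ from the proof of Proposition~\ref{connection}, this reduces everything to algebra. Two further facts are needed. First, $J^k$ is parallel on $N_k$, so $\dif J^k_{\alpha\beta}=\varpi^k_{\alpha\gamma}J^k_{\gamma\beta}-J^k_{\alpha\gamma}\varpi^k_{\gamma\beta}$; this makes the $\varpi^k$ terms cancel against the quadratic terms. Second, the intrinsic curvature gives $\dif\varpi^k+\varpi^k\wedge\varpi^k=\tfrac{1}{2d_k}\RR^k\,\theta^{k}\wedge\theta^{k}$, where the factor $1/d_k$ comes from rescaling the orthonormal coframe by $\sqrt{d_k}$.

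\textbf{The $\eta s$ block.} Start from
\[
\Omega_{\eta s}=\dif\Bigl(\tfrac{\Theta_x}{2\sqrt\Theta}\theta^\eta\Bigr)+\sum_{k,\alpha}\varpi_{\eta(k,\alpha)}\wedge\varpi_{(k,\alpha)s}.
\]
Since $\bigl(\tfrac{\Theta_x}{2\sqrt\Theta}\bigr)_x\sqrt\Theta\cdot\sqrt\Theta+\tfrac{\Theta_x^2}{4\Theta}=\tfrac12\Theta_{xx}$, this produces the term $\tfrac12\Theta_{xx}\theta^s\wedge\theta^\eta$. The horizontal part collects
\[
-\tfrac{\Theta_x}{2\sqrt\Theta}\cdot\tfrac{q_k\sqrt\Theta}{2d_k}=-\tfrac{q_k\Theta_x}{4d_k}
\qquad\text{and}\qquad
\tfrac{q_k^2\Theta}{4d_k^2},
\]
which sum to $P_k$. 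The mixed blocks are obtained in the same manner. There the derivative $(q_k\sqrt\Theta/2d_k)_x\sqrt\Theta=\tfrac{q_k\Theta_x}{4d_k}-\tfrac{q_k^2\Theta}{2d_k^2}$ and the products through $\varpi_{\eta s}$ and $\varpi_{(k,\alpha)(k,\beta)}$ combine into $\pm P_k$, with $J^k$ placed as stated.

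\textbf{Horizontal blocks and the main difficulty.} The main bookkeeping obstacle is the horizontal block $(k,\alpha)(k,\beta)$. Here
\[
\dif\Bigl(\tfrac{q_k\sqrt\Theta}{2d_k}J^k_{\alpha\beta}\theta^\eta\Bigr)
\]
brings in $\dif\theta^\eta$, and $\dif\theta^\eta$ contains the sum over all factors $j$. This sum is the source of the term $-\sum_j Q_{kj}J^kJ^j\theta^j\wedge\theta^j$. The products $\varpi_{(k,\alpha)s}\wedge\varpi_{s(k,\beta)}$ and $\varpi_{(k,\alpha)\eta}\wedge\varpi_{\eta(k,\beta)}$ give the $-Q_{kk}(\theta^{k,\alpha}\wedge\theta^{k,\beta}+J^kJ^k\theta\wedge\theta)$ terms. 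The $\theta^s\wedge\theta^\eta$ coefficient collects $\partial_x$ of the coefficient together with the $\Theta_x$ term, giving $-2P_kJ^k_{\alpha\beta}$.

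For the cross-factor block $k\neq j$, the form $\varpi_{(k,\alpha)(j,\beta)}$ vanishes. Its curvature therefore comes only from the quadratic terms through $s$ and $\eta$. These equal
\[
-\tfrac{q_kq_j\Theta}{4d_kd_j}\bigl(\theta^{k,\alpha}\wedge\theta^{j,\beta}+J^kJ^j\theta^{k}\wedge\theta^{j}\bigr),
\]
as claimed. The remaining components vanish by the same type-counting.

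\textbf{Consistency check.} Finally, we verify the result by tracing to the Ricci form and comparing with the scalar curvature formula already computed. This fixes sign and normalization conventions, in particular the factor $\tfrac12$ in $\omega_k=\tfrac12J\bar\theta\wedge\bar\theta$.
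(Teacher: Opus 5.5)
Your route is the paper's: it applies Cartan's second structure equation to the connection forms of Proposition~\ref{connection}. The block-by-block coefficients you extract ($\tfrac12\Theta_{xx}$, $\pm P_k$, $-2P_kJ^k_{\alpha\beta}$, $-Q_{kk}$, $-Q_{kj}$, with $\dif\theta^\eta$ producing the sum over $j$) are exactly what that computation gives. However, two of your auxiliary identities are misstated, and one of them would break the computation if used as written.

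First, by your own rule $\dif F=F_x\sqrt\Theta\,\theta^s$, the $\theta^s\wedge\theta^\eta$ coefficient of $\dif\varpi_{\eta s}$ is $\bigl(\tfrac{\Theta_x}{2\sqrt\Theta}\bigr)_x\sqrt\Theta+\tfrac{\Theta_x^2}{4\Theta}=\tfrac12\Theta_{xx}$, with a single factor $\sqrt\Theta$. Your version with $\sqrt\Theta\cdot\sqrt\Theta$ is false: for $\Theta=x$ it gives $\tfrac1{4x}-\tfrac1{4\sqrt x}\neq0$.

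Second, with the convention $\dif\bar\theta^{k,\alpha}=-\varpi^k_{\alpha\beta}\wedge\bar\theta^{k,\beta}$, the condition $\nabla\omega_k=0$ gives $\dif J^k_{\alpha\beta}=\varpi^k_{\gamma\alpha}J^k_{\gamma\beta}+\varpi^k_{\gamma\beta}J^k_{\alpha\gamma}=J^k_{\alpha\gamma}\varpi^k_{\gamma\beta}-\varpi^k_{\alpha\gamma}J^k_{\gamma\beta}$. This is the opposite sign to yours, and the sign matters. The $\varpi^k$-terms in $\Omega_{(k,\alpha)(k,\beta)}$ are $\tfrac{q_k\sqrt\Theta}{2d_k}\bigl(\dif J^k+\varpi^kJ^k-J^k\varpi^k\bigr)_{\alpha\beta}\wedge\theta^\eta$, and those in $\Omega_{\eta(k,\alpha)}$ are $\tfrac{q_k\sqrt\Theta}{2d_k}\bigl(J^k\varpi^k-\varpi^kJ^k-\dif J^k\bigr)_{\alpha\gamma}\wedge\theta^{k,\gamma}$. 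Both vanish with the correct sign but would be doubled with yours. Alternatively, take a $J^k$-adapted orthonormal coframe: there $J^k_{\alpha\beta}$ is constant and $\varpi^k$ commutes with $J^k$, so both pieces vanish separately and no formula for $\dif J^k$ is needed.

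With these corrections your sketch is a complete proof. The final trace check is only a sanity check; it does not ``fix'' any convention, since the factor $\tfrac12$ in $\omega_k$ is part of the setup.
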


\begin{coro}For all $k,j=0,\cdots,r$, with the Greek indices ranging over the dimensions corresponding to their factor indices,
\begin{align*}
\RR_{\eta s s\eta}&=\frac12\Theta_{xx},\\
\RR_{(k,\alpha)s\,s(k,\beta)}&=-P_k\delta_{\alpha\beta},\quad\RR_{(k,\alpha)s\,\eta(k,\beta)}=P_kJ^k_{\alpha\beta},\quad \RR_{\eta(k,\alpha)\,\eta(k,\beta)}=P_k\delta_{\alpha\beta},\\
\RR_{(k,\alpha)(k,\beta)(k,\gamma)(k,\delta)}&=\frac{1}{d_k}\,\RR^k_{\alpha\beta\gamma\delta}-Q_{kk}\Bigl(\delta_{\alpha\gamma}\delta_{\beta\delta}-\delta_{\alpha\delta}\delta_{\beta\gamma}+J^k_{\alpha\gamma}J^k_{\beta\delta}-J^k_{\alpha\delta}J^k_{\beta\gamma}+2J^k_{\alpha\beta}J^k_{\gamma\delta}\Bigr),\\
\RR_{(k,\alpha)(j,\beta)(k,\gamma)(j,\delta)}&=-Q_{kj}\Bigl(\delta_{\alpha\gamma}\delta_{\beta\delta}+J^k_{\alpha\gamma}J^j_{\beta\delta}\Bigr), k\ne j.
\end{align*}

All other components are either zero or determined by the Bianchi symmetries \[
\RR_{ABCD}=-\RR_{ABDC}=-\RR_{BACD}=\RR_{CDAB},
\]
\end{coro}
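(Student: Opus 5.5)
The component formulas will be read off from the curvature $2$-forms of Corollary~\ref{Rm} via
\[
\Omega_{AB}=\tfrac12\RR_{ABCD}\,\theta^C\wedge\theta^D,
\]
together with the Bianchi symmetries. So the plan has two parts: first justify Corollary~\ref{Rm} from Proposition~\ref{connection} by Cartan's second structure equation, then extract coefficients.

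For Corollary~\ref{Rm}, I will compute $\Omega_{AB}=\dif\varpi_{AB}+\varpi_{AC}\wedge\varpi_{CB}$ block by block: the pairs $(\eta,s)$, $((k,\alpha),s)$, $(\eta,(k,\alpha))$, $((k,\alpha),(k,\beta))$ and $((k,\alpha),(j,\beta))$ with $k\neq j$. The ingredients are as follows.
\begin{itemize}
\item Derivatives in $s$ become $\sqrt{\Theta}\,\partial_x$, since $\dif s=\Theta^{-1/2}\dif x$.
\item For the coframe, $\dif\theta^\eta$ and $\dif\theta^{k,\alpha}$ are given in the proof of Proposition~\ref{connection}.
\item $\dif J^k=0$ holds in an adapted frame, using that $J^k$ is parallel.
\item On each factor, $\dif\varpi^k+\varpi^k\wedge\varpi^k=\Omega^k$.
\end{itemize}
Typical outcomes are the following.
\begin{itemize}
\item $\dif\bigl(\tfrac{\Theta_x}{2\sqrt\Theta}\sqrt\Theta\,\eta\bigr)=\tfrac12\Theta_{xx}\,\dif x\wedge\eta+\tfrac12\Theta_x\,\dif\eta$ produces the $\tfrac12\Theta_{xx}\theta^s\wedge\theta^\eta$ term. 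It also produces a contribution $-\tfrac{q_k\Theta_x}{4d_k}J^k\theta\theta$, which combines with the quadratic term $\varpi_{\eta(k,\alpha)}\wedge\varpi_{(k,\alpha)s}$, contributing $\tfrac{q_k^2\Theta}{4d_k^2}$, to give $P_k$.
\item The coefficient $Q_{kj}$ arises from products of $\varpi_{(k,\alpha)s}$ with $\varpi_{s(j,\beta)}$ and of $\varpi_{(k,\alpha)\eta}$ with $\varpi_{\eta(j,\beta)}$.
\item The factor $\bar\theta\to\theta/\sqrt{d_k}$ converts $\Omega^k$ into $\tfrac1{2d_k}\RR^k\theta\theta$.
\end{itemize}
I will check the signs of the $J$-terms by verifying the result against the expected $J$-invariance, $\RR(JX,JY)=\RR(X,Y)$.

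For the present corollary, I will then match coefficients as follows.
\begin{itemize}
\item From $\Omega_{\eta s}$, the $\theta^s\wedge\theta^\eta$ coefficient $\tfrac12\Theta_{xx}$ gives $\RR_{\eta s s\eta}=\tfrac12\Theta_{xx}$ after accounting for $\tfrac12(\RR_{\eta s s\eta}-\RR_{\eta s\eta s})$.
\item From $\Omega_{(k,\alpha)s}=P_k(-\theta^s\wedge\theta^{k,\alpha}+J^k_{\alpha\beta}\theta^\eta\wedge\theta^{k,\beta})$, antisymmetrizing gives $\RR_{(k,\alpha)s\,s(k,\beta)}=-P_k\delta_{\alpha\beta}$ and $\RR_{(k,\alpha)s\,\eta(k,\beta)}=P_kJ^k_{\alpha\beta}$.
\item From $\Omega_{\eta(k,\alpha)}$, the $\theta^\eta\wedge\theta^{k,\alpha}$ term gives $\RR_{\eta(k,\alpha)\eta(k,\beta)}=P_k\delta_{\alpha\beta}$.
\item For the mixed factor components, the term $-Q_{kj}(\theta^{k,\alpha}\wedge\theta^{j,\beta}+J^k_{\alpha\gamma}J^j_{\beta\delta}\theta^{k,\gamma}\wedge\theta^{j,\delta})$ gives, at $C=(k,\gamma)$ and $D=(j,\delta)$, the value $-Q_{kj}(\delta_{\alpha\gamma}\delta_{\beta\delta}+J^k_{\alpha\gamma}J^j_{\beta\delta})$.
\item For the same-factor components, the skew part of $\theta^{k,\alpha}\wedge\theta^{k,\beta}+J^k_{\alpha\gamma}J^k_{\beta\delta}\theta^{k,\gamma}\wedge\theta^{k,\delta}$ in $(\gamma,\delta)$ yields $\delta_{\alpha\gamma}\delta_{\beta\delta}-\delta_{\alpha\delta}\delta_{\beta\gamma}+J^k_{\alpha\gamma}J^k_{\beta\delta}-J^k_{\alpha\delta}J^k_{\beta\gamma}$. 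The $j=k$ summand of $\sum_jQ_{kj}J^k_{\alpha\beta}J^j_{\gamma\delta}$ adds $2J^k_{\alpha\beta}J^k_{\gamma\delta}$, since the factor $\tfrac12$ is doubled by the antisymmetric pair.
\end{itemize}
Components not listed either vanish because the corresponding wedge does not occur, or follow by the stated symmetries.

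The main obstacle is bookkeeping of signs and factors of $2$ from the convention $\Omega=\tfrac12\RR\,\theta\wedge\theta$, and from the $J$-contractions such as $J^k_{\alpha\gamma}J^k_{\gamma\beta}=-\delta_{\alpha\beta}$. I would cross-check two ways.
\begin{itemize}
\item Take $r=0$, $m=1$, where the metric is a surface $\tfrac1\Theta\dif x^2+\Theta\eta^2$ with Gauss curvature $-\tfrac12\Theta_{xx}$.
\item Recompute the scalar curvature from these components and compare with the earlier formula $\RR=-\frac{(p\Theta)_{xx}}{p}+\sum_k\frac{n_k\lambda_k}{d_k}+\frac{m(m-1)}{x}$.
\end{itemize}
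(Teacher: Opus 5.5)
Your proposal is correct and follows the paper's route exactly: you derive the curvature $2$-forms of Corollary~\ref{Rm} from the connection forms of Proposition~\ref{connection} via Cartan's second structure equation, then read off $\RR_{ABCD}$ from $\Omega_{AB}=\frac12\RR_{ABCD}\theta^C\wedge\theta^D$. Two small precisions: the cancellation of the $\varpi^k$ cross terms in $\Omega_{\eta(k,\alpha)}$ and $\Omega_{(k,\alpha)(k,\beta)}$ uses $\varpi^kJ^k=J^k\varpi^k$ (this is where parallelism of $J^k$ enters, beyond constancy of $J^k_{\alpha\beta}$ in a unitary frame), and unlisted nonzero components such as $\RR_{\eta s(k,\alpha)(k,\beta)}=2P_kJ^k_{\alpha\beta}$ or $\RR_{(k,\alpha)(k,\beta)(j,\gamma)(j,\delta)}=-2Q_{kj}J^k_{\alpha\beta}J^j_{\gamma\delta}$ ($j\neq k$) come from the first Bianchi identity, or directly from $\Omega_{\eta s}$ and $\Omega_{(k,\alpha)(k,\beta)}$, not from the skew and pair symmetries alone.
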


Define the O'Neill tensors by\[\begin{aligned}
A_X Y&= P_{\cV}\nabla_{P_{\cH} X}P_{\cH} Y+P_{\cH}\nabla_{P_{\cH} X} P_{\cV} Y,\\
T_X Y&= P_{\cH}\nabla_{ P_{\cV} X}P_{\cV} Y+P_\cV\nabla_{ P_{\cV} X}P_{\cH} Y,
\end{aligned}\]
where \(P_{\cH}:TX_0\to \cH\) and \(P_{\cV}:TX_0\to \cV\) are the orthogonal projections. For horizontal vectors $X,Y$ and a unit vertical vector $V$, define
\[
\II_{V}(X,Y)=\langle\nabla_X V,Y\rangle V
\].
\begin{coro}\label{oneill}

For $k,j=0,\dots,r$, with $\alpha=1,\dots,2n_k$ and $\beta=1,\dots,2n_j$,

$(1)$ $T\equiv 0$, i.e. the fibers of $\widetilde\pi:X_0\to \BC P^{m-1}\times N$ are all totally geodesic. Furthermore, $[e_{s},e_{\eta}]=-\dfrac{\Theta_{x}}{2\sqrt{\Theta}}\,e_{\eta}\in\mathcal{V}$.

$(2)$ $A_{e_{k,\alpha}}e_{k,\beta}=\mathrm{II}_{e_{\eta}}(e_{k,\alpha},e_{k,\beta})=\frac{q_{k}\sqrt{\Theta}}{2d_{k}}\,J^{k}_{\alpha\beta}\,e_{\eta},$ and $A_{e_{k,\alpha}}e_{j,\beta}=\mathrm{II}_{e_{\eta}}(e_{k,\alpha},e_{j,\beta})=0$ whenever $k\neq j$.

$(3)$ $\mathrm{II}_{e_{s}}(e_{k,\alpha},e_{k,\beta})=\frac{q_{k}\sqrt{\Theta}}{2d_{k}}\,\delta_{\alpha\beta}\,e_{s}$ and $\mathrm{II}_{e_{s}}(e_{k,\alpha},e_{j,\beta})=0$ whenever $k\neq j$.

$(4)$ $|A|^{2}=|\mathrm{II}_{e_s}|^{2}=\sum_{k=0}^{r}\frac{n_{k}q_{k}^{2}\Theta}{2d_{k}^{2}}.$

\end{coro}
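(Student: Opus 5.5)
The plan is to derive Corollary~\ref{oneill} directly from the connection forms in Proposition~\ref{connection}, using $\nabla_{e_A}e_B=\varpi_{CB}(e_A)\,e_C$ with $\varpi_{CB}=-\varpi_{BC}$. The vertical frame is $\{e_s,e_\eta\}$ and the horizontal frame is $\{e_{k,\alpha}\}$, so each O'Neill quantity is read off as a specific connection coefficient.

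For (1) we need the horizontal parts of $\nabla_V W$ and the vertical parts of $\nabla_V X$, for vertical $V,W$ and horizontal $X$. These are $\langle\nabla_{e_s}e_s,e_{k,\alpha}\rangle$, $\langle\nabla_{e_\eta}e_\eta,e_{k,\alpha}\rangle$ and their mixed analogues, i.e.\ the coefficients of $\theta^s$ or $\theta^\eta$ in $\varpi_{(k,\alpha)s}$ and $\varpi_{(k,\alpha)\eta}$.
\begin{itemize}
\item By Proposition~\ref{connection}, $\varpi_{(k,\alpha)s}$ is a multiple of $\theta^{k,\alpha}$ and $\varpi_{\eta(k,\alpha)}$ is a combination of the $\theta^{k,\beta}$.
\item Both therefore vanish on $e_s$ and $e_\eta$.
\end{itemize}
Hence $T\equiv0$, and the fibers are totally geodesic. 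The bracket is $[e_s,e_\eta]=\nabla_{e_s}e_\eta-\nabla_{e_\eta}e_s$. Only $\varpi_{\eta s}=\frac{\Theta_x}{2\sqrt\Theta}\theta^\eta$ contributes, since $\varpi_{\eta s}(e_s)=0$. This gives $\nabla_{e_s}e_\eta=0$ and $\nabla_{e_\eta}e_s=\frac{\Theta_x}{2\sqrt\Theta}e_\eta$, hence the stated formula for $[e_s,e_\eta]$.

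For (2) and (3) we compute
\[
\langle\nabla_{e_{k,\alpha}}e_{j,\beta},e_\eta\rangle=\varpi_{\eta(j,\beta)}(e_{k,\alpha})=-\tfrac{q_j\sqrt\Theta}{2d_j}J^j_{\beta\alpha}\delta_{jk},
\]
which is $\frac{q_k\sqrt\Theta}{2d_k}J^k_{\alpha\beta}\delta_{jk}$ by antisymmetry of $J$. Similarly,
\[
\langle\nabla_{e_{k,\alpha}}e_{j,\beta},e_s\rangle=\varpi_{s(j,\beta)}(e_{k,\alpha})=-\tfrac{q_j\sqrt\Theta}{2d_j}\delta_{jk}\delta_{\alpha\beta}.
\]
Since $\II_V(X,Y)=\langle\nabla_XV,Y\rangle V=-\langle V,\nabla_XY\rangle V$, these give (3) directly, and also the $e_\eta$ case of (2).

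The part needing care is showing that $A$ equals $\II_{e_\eta}$ rather than also containing the $e_s$ component. The $e_s$ component of $\nabla_XY$ is symmetric in $X,Y$. By O'Neill's result that $A_XY=\frac12 P_\cV[X,Y]$ is skew, the symmetric $e_s$ part drops out. This is consistent with the symmetric $\delta_{\alpha\beta}$ appearing in $\II_{e_s}$, versus the skew $J_{\alpha\beta}$ appearing in $A$. It also explains why the ``$A=\II$'' identification holds only for $e_\eta$.

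The overall sign conventions of $\II$ versus $A$ must be matched carefully; this is the main bookkeeping obstacle. I would check them against $\dif\eta$ in \eqref{eta}.

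For (4), sum over all pairs. On factor $k$ with $2n_k$ real dimensions,
\[
\sum_{\alpha,\beta}(J^k_{\alpha\beta})^2=\sum_{\alpha,\beta}\delta_{\alpha\beta}^2=2n_k.
\]
Each of $|A|^2$ and $|\II_{e_s}|^2$ then equals
\[
\sum_k 2n_k\,\frac{q_k^2\Theta}{4d_k^2}=\sum_k\frac{n_kq_k^2\Theta}{2d_k^2}.
\]
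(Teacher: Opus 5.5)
\textbf{What works.} Your computations for (1) and (3) from $\langle\nabla_{e_A}e_B,e_C\rangle=\varpi_{CB}(e_A)$ are correct. For (1) your route is sounder than the paper's. The paper concludes $T\equiv0$ from $P_\cH[e_s,e_\eta]=0$, obtained via \eqref{eq:LB}, but that only expresses integrability of $\cV$, i.e.\ symmetry of $T$. You instead check $P_\cH\nabla_VW=0$ and $P_\cV\nabla_VX=0$ directly, which is what $T\equiv0$ means.

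\textbf{The gap is in (2).} You remove the $e_s$-component of $A_{e_{k,\alpha}}e_{k,\beta}$ by citing O'Neill's identity $A_XY=\frac12P_\cV[X,Y]$. That identity holds only for Riemannian submersions, and $\widetilde\pi$ is not one: the horizontal metric $x\,g_{FS}+\sum_k d_k(x)g_k$ varies along the fibers. For basic horizontal $X,Y$ and vertical $V$, Koszul's formula gives $2\langle\nabla_XY,V\rangle=\langle[X,Y],V\rangle-V\langle X,Y\rangle$. The second term is exactly your own nonzero coefficient $\langle\nabla_{e_{k,\alpha}}e_{k,\beta},e_s\rangle=-\frac{q_k\sqrt\Theta}{2d_k}\delta_{\alpha\beta}$. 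So with the paper's definition of $A$, your formulas give
\[
A_{e_{k,\alpha}}e_{k,\beta}=P_\cV\nabla_{e_{k,\alpha}}e_{k,\beta}=\frac{q_k\sqrt\Theta}{2d_k}\bigl(J^k_{\alpha\beta}e_\eta-\delta_{\alpha\beta}e_s\bigr)=-\II_{e_\eta}(e_{k,\alpha},e_{k,\beta})-\II_{e_s}(e_{k,\alpha},e_{k,\beta}),
\]
which contradicts the step.

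The sign you deferred is also not a convention to be fixed by checking against $\dif\eta$. Your identity $\II_V(X,Y)=-\langle V,\nabla_XY\rangle V$ forces $\II_{e_\eta}(e_{k,\alpha},e_{k,\beta})=-\frac{q_k\sqrt\Theta}{2d_k}J^k_{\alpha\beta}e_\eta$. So your claim that the computation ``also gives the $e_\eta$ case of (2)'' is off by a sign.

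The paper's own proof makes the same move: it writes $A_{e_{k,\alpha}}e_{k,\beta}=\frac12P_\cV[e_{k,\alpha},e_{k,\beta}]$ under the heading ``$A=-\II_\eta$''. So the defect is in the statement itself. What can actually be proved is (2) and (4) for the skew part $\frac12P_\cV[X,Y]=-\II_{e_\eta}$, with the symmetric part of $P_\cV\nabla_XY$ equal to $-\II_{e_s}$. A correct write-up should say this explicitly rather than appeal to O'Neill. Nothing downstream depends on the misidentification: Lemma~\ref{P} uses both $\langle\nabla_{e_{k,\alpha}}e_s,e_{k,\beta}\rangle$ and $\langle\nabla_{e_{k,\alpha}}e_\eta,e_{k,\beta}\rangle$, and your computation supplies both.
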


\begin{proof}
Recall the following formula for reading off Lie brackets from Cartan's structure equations:
for any indices $A,B,C\in\{s,\eta\}\cup\{(k,\alpha):k=0,\cdots,r;\alpha=1,\cdots,2n_k\}$,
\begin{equation}\label{eq:LB}
\theta^{C}([e_{A},e_{B}])=\varpi_{CB}(e_{A})-\varpi_{CA}(e_{B}).
\end{equation}

$(1)$ Computation of $T$. Since
\[
\varpi_{(k,\alpha)\eta}=\frac{q_{k}\sqrt{\Theta}}{2d_{k}}J^{k}_{\alpha\beta}\,\theta^{k,\beta},
\qquad
\varpi_{(k,\alpha)s}=\frac{q_{k}\sqrt{\Theta}}{2d_{k}}\,\theta^{k,\alpha}.
\]
and $\theta^{k,\gamma}(e_{s})=\theta^{k,\gamma}(e_{\eta})=0$, formula \eqref{eq:LB} gives
$\theta^{k,\alpha}([e_{s},e_{\eta}])=0$ for every $k,\alpha$. Hence $P_{\cH}[e_{s},e_{\eta}]=0$ and $T\equiv0$.

Furthermore, 
\[
\theta^{s}([e_{s},e_{\eta}])
=\varpi_{s\eta}(e_{s})-\varpi_{ss}(e_{\eta})=0,
\]
\[
\theta^{\eta}([e_{s},e_{\eta}])
=\varpi_{\eta\eta}(e_{s})-\varpi_{\eta s}(e_{\eta})
=-\frac{\Theta_{x}}{2\sqrt{\Theta}},
\]
where we have used $\varpi_{\eta s}=\frac{\Theta_{x}}{2\sqrt{\Theta}}\,\theta^{\eta}$. Therefore
\([e_{s},e_{\eta}]=-\frac{\Theta_{x}}{2\sqrt{\Theta}}\,e_{\eta}\in\mathcal{V}.\)

$(2)$ Computation of $A=-\II_\eta$. Since
\[
\varpi_{\eta(k,\beta)}=-\frac{q_{k}\sqrt{\Theta}}{2d_{k}}\,J^{k}_{\beta\gamma}\,\theta^{k,\gamma}.
\]
Then
\[
\theta^{\eta}([e_{k,\alpha},e_{k,\beta}])
=-\frac{q_{k}\sqrt{\Theta}}{2d_{k}}J^{k}_{\beta\alpha}
+\frac{q_{k}\sqrt{\Theta}}{2d_{k}}J^{k}_{\alpha\beta}
=\frac{q_{k}\sqrt{\Theta}}{d_{k}}J^{k}_{\alpha\beta},
\]
where we used $J^{k}_{\beta\alpha}=-J^{k}_{\alpha\beta}$.

A similar computation gives $\theta^{s}([e_{k,\alpha},e_{k,\beta}])=0$. Hence
\[
A_{e_{k,\alpha}}e_{k,\beta}=\frac{1}{2}P_{\cV}[e_{k,\alpha},e_{k,\beta}]
=\frac{q_{k}\sqrt{\Theta}}{2d_{k}}J^{k}_{\alpha\beta}\,e_{\eta}.
\] 
When $k\neq j$, the $1$-forms $\varpi_{\eta(j,\beta)}$ and $\varpi_{\eta(k,\alpha)}$ are supported on disjoint coframe indices. Consequently, $\theta^{\eta}([e_{k,\alpha},e_{j,\beta}])=0$, and hence $A_{e_{k,\alpha}}e_{j,\beta}=0$.

$(3)$ Computation of $\II_s$. 
By definition of $\II$, 
\[
\II_s(e_{k,\alpha},e_{j,\beta})=\langle\nabla_{e_{k,\alpha}}e_{s},e_{j,\beta}\rangle e_s=\varpi_{(j,\beta)s}(e_{k,\alpha})e_s
=\frac{q_{k}\sqrt{\Theta}}{2d_{k}}\,\delta_{\alpha\beta}\,\delta_{kj}e_s.
\]

$(4)$ Notice that $\sum_{\alpha,\beta}(J^{k}_{\alpha\beta})^{2}=2n_{k}$. Hence
\(|A|^{2}=\sum_{k=0}^{r}\sum_{\alpha,\beta=1}^{2n_{k}}\Bigl|\frac{q_{k}\sqrt{\Theta}}{2d_{k}}J^{k}_{\alpha\beta}\Bigr|^{2}=\sum_{k=0}^{r}\frac{n_{k}q_{k}^{2}\Theta}{2d_{k}^{2}}.\)

A similar computation shows that $|\mathrm{II}_s|^{2}=\sum_{k=0}^{r}\frac{n_{k}q_{k}^{2}\Theta}{2d_{k}^{2}}$.
\end{proof}

\subsection{Geometry along the K\"ahler--Ricci flow}
Fix the initial metric
\[
\omega_0=\sum_{k=1}^r a_{k,0}\, \omega_k+\ddbar \varphi_0(\rho)\in B_0c_1(H)+\pi^*\sum_{k=1}^r a_{k,0}[\omega_k]
\]
with $B_0,a_{k,0}\in\mathbb Q$. We now discuss the geometry along the K\"ahler--Ricci flow starting from $\omega_0$.

Suppose \begin{equation}
\omega(t)=\sum_{k=1}^r a_{k}(t)\, \omega_k+\ddbar \varphi(\rho,t)\label{omega}
\end{equation}
for $t\in[0,T)$ with $\omega(0)=\omega_0$. Then
\[
\begin{aligned}
\ddt{\omega(t)}&=\sum_{k=1}^r a'_{k}(t)\, \omega_k+\ddbar \varphi_t(\rho,t),\\
\Ric(\omega(t))&=\sum_{k=1}^{r} (\lambda_k-mq_k)\omega_k-\ddbar(\log{p\Theta}-m\rho).
\end{aligned}
\]
Hence $\omega(t)$ solves the K\"ahler--Ricci flow if and only if the functions $a_k$ satisfy
\begin{equation}\frac{\dif a_k(t)}{\dif t}=-(\lambda_k-mq_k)\label{a}
\end{equation}
and $\varphi$ satisfies
\begin{equation}\begin{aligned}
\varphi_t&=\log{p\Theta}-m\rho+c(t)\\
&=(m-1)\log \varphi_\rho+\sum_{k=1}^r n_k\log (a_k(t)+q_k\varphi_\rho) +\log \varphi_{\rho\rho}-m\rho+c(t),\label{phi}
\end{aligned}\end{equation}
where $c(t)$ is a normalization constant chosen such that $\varphi_t(0,t)=0$.

Differentiating \eqref{phi} with respect to $\rho$ and changing variables from $\rho$ to $x$, we obtain
\begin{equation}\label{x}
\dd_tx=\Theta_x+\left(\frac{m-1}{x}+\sum_{k=1}^r\frac{n_kq_k}{d_k}\right)\Theta-m;
\end{equation}

Differentiating once more, we obtain
\begin{equation} \label{Theta}
\Theta_t=\Theta\Theta_{xx}+(m-\Theta_x)\Theta_x-\left(\frac{m-1}{x^2}+\sum_{k=1}^r \frac{n_kq_k^2}{d_k^2} \right)\Theta^2.
\end{equation}
Here $d_k(x,t)=a_k(t)+q_kx$.

By standard parabolic theory, equation \eqref{Theta} admits a unique solution, as does equation \eqref{phi}. The form $\omega(t)$ defined by $a_k(t)$ and $\varphi(\rho,t)$ satisfying equations \eqref{a} and \eqref{phi}, respectively, is therefore the unique solution to the K\"ahler--Ricci flow. In particular, the Calabi ansatz constructed in Section~\ref{ansatz} is preserved by the flow.

Now consider the solution $\omega(t)$ to the K\"ahler--Ricci flow defined by \eqref{omega}. The solution of ODE \eqref{a} is given by $a_k(t)=a_{k,0}-(\lambda_k-mq_k)t$. The positivity condition in Proposition~\ref{kahler} shows that
\[
a_k(t)>0,\quad b_k(t)=a_k(t)+q_kB(t)>0.
\]
On the one hand, from \eqref{class} and \eqref{chern}, we obtain
\[
[\omega(t)]=[\omega_0]-tc_1(X)=(B_0-(m+1)t)c_1(H)+\sum_{k=1}^r (a_{k,0}-(\lambda_k-mq_k)t)[\omega_k]
\].
On the other hand, \eqref{class} gives
\[
[\omega(t)]=B(t)c_1(H)+\sum_{k=1}^r a_k(t)[\omega_k]
\]
since $\omega(t)$ is of the Calabi ansatz \eqref{calabi}.

Hence $B(t)=B_0-(m+1)t$, and the positivity of $[\omega(t)]$ implies that $B(t)>0$.

Consequently, the maximal existence time $T$ is the first time at which one of $a_k$, $b_k$, or $B$ reaches $0$; in particular, $T$ is finite. More precisely, define
\[
T_B=\frac{B_0}{m+1}, T_{a,k}=\begin{cases}\frac{a_{k,0}}{\lambda_k-mq_k},& \lambda_k-mq_k>0; \\
+\infty,& \lambda_k-mq_k\leq 0,
\end{cases}, T_{b,k}=\begin{cases}\frac{a_{k,0}+q_kB_0}{\lambda_k+q_k},& \lambda_k+q_k>0; \\
+\infty,& \lambda_k+q_k\leq 0.
\end{cases}
\]
Then
\[
T=\min\{T_B, T_{a,k}, T_{b,k}: k=1,\cdots,r\}<+\infty.
\]

\subsection{Estimates of Metric Components}

We collect some basic estimates for the metric coefficient functions.

The next proposition and the following corollaries give estimates for $\Theta$. Adapting \cite[Proposition 10.2]{JST23}, we use the multiples of the function $\frac1B x(B-x)$ to trap $\Theta$, which  is the unique quadratic polynomial that has the same values and \(x\)-derivatives as \(\Theta\) at the endpoints $0$ and \(B(t)\).

\begin{prop}
There exists $C\geq 1$ such that
\[
\frac{1}{C}\frac{x(B(t)-x)}{B(t)}\leq \Theta\leq C\frac{x(B(t)-x)}{B(t)}
\]
for all $t\in(0,T)$ and $x\in (0,B(t))$.\label{G}
\end{prop}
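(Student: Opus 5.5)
We will prove both bounds with the maximum principle, applied to the evolution equation \eqref{Theta} on the moving domain $\{0<x<B(t)\}$. Throughout, we use the boundary conditions of Proposition~\ref{kahler}(5):
\[
\Theta=0 \ \text{at both endpoints},\qquad \Theta_x(0)=1,\qquad \Theta_x(B)=-1.
\]
Set $Q(x,t)=x(B(t)-x)/B(t)$. This is the quadratic with the same endpoint values and endpoint slopes as $\Theta$. The natural quantity is therefore the ratio $F=\Theta/Q$. It extends continuously to the closed interval with $F=1$ at $x=0$ and $x=B(t)$, by l'H\^opital and the slope conditions. At $t=0$ it is bounded above and below by positive constants, by smoothness and positivity of $\Theta_0$ together with the closing conditions. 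So it suffices to show that an interior maximum of $F$ cannot exceed a large constant $C$, and an interior minimum cannot go below $1/C$, for all $t<T$.

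\textbf{Step 1: the evolution of $F$.} First note how the domain moves. Because the endpoints $x=0$ and $x=B(t)$ are fixed as functions of $\rho$ at $\rho=\mp\infty$, the time derivative at fixed $x$ in \eqref{Theta} is the correct one to use. The boundary values of $F$ are identically $1$, so only interior extrema matter.

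Next compute
\[
Q_t=\frac{B'x^2}{B^2}=-(m+1)\frac{x^2}{B^2},\qquad Q_x=1-\frac{2x}{B},\qquad Q_{xx}=-\frac{2}{B}.
\]
At an interior critical point of $F$ we have $\Theta_x=FQ_x$ and $\Theta_{xx}=FQ_{xx}+QF_{xx}$. Substituting into \eqref{Theta} gives
\[
QF_t=\Theta QF_{xx}+F\Bigl[\Theta Q_{xx}+(m-FQ_x)Q_x-\Bigl(\tfrac{m-1}{x^2}+\sum_k\tfrac{n_kq_k^2}{d_k^2}\Bigr)FQ^2\Bigr]-FQ_t.
\]

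\textbf{Step 2: upper bound.} At an interior maximum with $F$ large:
\begin{itemize}
\item $\Theta Q_{xx}=-2FQ/B<0$;
\item $-F^2Q_x^2\le0$;
\item the term $-\tfrac{(m-1)}{x^2}F^2Q^2=-(m-1)F^2(B-x)^2/B^2$ is good;
\item the remaining pieces $mQ_x$ and $-Q_t/F\cdot F$ are bounded.
\end{itemize}
Altogether the bracket is at most
\[
F\Bigl(m+(m+1)-F\Bigl[\tfrac{2Q}{B}+Q_x^2+(m-1)\tfrac{(B-x)^2}{B^2}\Bigr]\Bigr).
\]
We then use the identity $2Q/B+Q_x^2=1-\tfrac{2x(B-x)}{B^2}\cdot$ (a nonnegative combination), and more precisely
\[
Q_x^2+\frac{4Q}{B}=1.
\]
Hence $2Q/B+Q_x^2\ge\tfrac12$. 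So the bracket is negative once $F>4m+2$, and $F\le\max(C_0,4m+2)$.

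Two points need care here. The $-Q_t$ term, once divided by $Q$, is $(m+1)x/(B(B-x))$, which blows up near $x=B$. For that reason we will actually work at a maximum of $F$ directly, where the term enters as $F\cdot(m+1)x^2/B^2$ before dividing by $Q$; it is then bounded by $(m+1)F$. We keep the dominant negative term of order $F^2$ through the identity above.

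\textbf{Step 3: lower bound (the main obstacle).} At an interior minimum with $F$ small, the good terms reverse sign.
\begin{itemize}
\item The dangerous negative terms are $\Theta Q_{xx}$ and $-\bigl(\tfrac{m-1}{x^2}+\sum\tfrac{n_kq_k^2}{d_k^2}\bigr)F Q^2$. Both are $O(F)$ relative to the positive term $mQ_x$ and the $O(1)$ term $-Q_t$.
\item The trouble is that $mQ_x$ changes sign at $x=B/2$, and $d_k$ may degenerate as $t\to T$.
\end{itemize}
The plan is as follows. Divide the bracket by $F$, so that the sign of $\partial_tF$ at the minimum is the sign of
\[
mQ_x-FQ_x^2-\tfrac{2FQ}{B}-F\Bigl(\tfrac{m-1}{x^2}+\sum\tfrac{n_kq_k^2}{d_k^2}\Bigr)Q^2+(m+1)\tfrac{x^2}{B^2}.
\]
The two positive contributions combine to
\[
mQ_x+(m+1)\tfrac{x^2}{B^2}=m-\tfrac{2mx}{B}+(m+1)\tfrac{x^2}{B^2}=m\Bigl(1-\tfrac{x}{B}\Bigr)^2+\tfrac{x^2}{B^2}\ge\tfrac{m}{m+1}.
\]
Every other term carries a factor $F$ times a coefficient that is bounded, using:
\begin{itemize}
\item $Q^2/x^2\le1$;
\item $Q\le B/4$;
\item $d_k\ge\min(a_k,b_k)$, where $d_k$ is linear in $x$. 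We must check that $Q^2/d_k^2$ stays bounded as $a_k$ or $b_k\to0$. Since $d_k\ge c\,\min(x,B-x)\cdot(\text{slope})$ only up to $a_k$, this requires care. If $a_k\to0$ with $q_k>0$, then $d_k\ge q_kx$ and $Q/d_k\le1/q_k$. The case $b_k\to0$ is symmetric. In general $d_k\ge |q_k|\min(x,B-x)$ whenever the relevant endpoint value tends to $0$, and otherwise $d_k$ is bounded below.
\end{itemize}
Hence at a minimum with $F<c$ small, $\partial_tF>0$, giving $F\ge\min(c,1/C_0)$.

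I expect this uniform control of $Q^2/d_k^2$ across all degeneration scenarios ($T=T_B$, $T_{a,k}$, or $T_{b,k}$) to be the main technical point. It uses only the linearity of $d_k$ in $x$ and the positivity $a_k,b_k>0$ on $[0,T)$.
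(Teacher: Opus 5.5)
Your proposal is correct and is essentially the paper's argument: the same ratio $\Theta\big/\bigl(x(B-x)/B\bigr)$ with boundary value $1$, the maximum principle at a first interior extremum, and the same key bounds $d_k\ge q_kx$ for $q_k>0$ and $d_k\ge |q_k|(B-x)$ for $q_k<0$. These bounds hold for every $t<T$ by positivity alone, so the case analysis over degeneration scenarios you worried about is unnecessary.

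The remaining differences are cosmetic. You work with $F$ multiplied through by $Q$ rather than with $H=\log G$ divided by $Q$, which keeps every coefficient bounded but gives a cruder upper threshold, $4m+2$ instead of the paper's $1$. The moving domain is harmless simply because $B(t)$ is decreasing, so a point that is interior at the extremal time is interior at all earlier times; the paper instead rescales to $\widetilde{x}=x/B(t)$.
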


\begin{proof}
Set $G(x,t)=\frac{B(t)\Theta(x,t)}{x(B(t)-x)}$. By the boundary conditions, we can extend it continuously by setting
\[
G(0,t)=G(B(t),t)=1.
\]
Let $H=\log G$. Then \begin{align*}
H_t&=-\frac{B_tx}{B(B-x)}+\frac{\Theta_t}{\Theta}=\frac{(m+1)x}{B(B-x)}+\frac{\Theta_t}{\Theta};\\
H_x&=\frac{\Theta_x}{\Theta}-\frac{B-2x}{x(B-x)};\\
H_{xx}&=\frac{\Theta_{xx}}{\Theta}-\left(\frac{\Theta_x}{\Theta}\right)^2+\frac{1}{x^2}+\frac{1}{(B-x)^2};
\end{align*}

In particular, if $H_x=0$, i.e. \(\frac{\Theta_x}{\Theta}=\frac{B-2x}{x(B-x)}\), then
\[
H_{xx}=\frac{\Theta_{xx}}{\Theta}+\frac{2}{x(B-x)}.
\]

Hence, combining the formulas above with \eqref{Theta}, at an interior spatial extremum of $H$ we have
\[
(\dd_t-\Theta\dd_{xx})H=\frac{1}{B}\left[\frac{m(B-x)}{x}+\frac{x}{B-x}\right](1-G)-\Theta\sum_{k=1}^r\frac{n_kq_k^2}{d_k^2(x)}.
\]

Notice that $(\dd_t-\Theta\dd_{xx})H<0$ when $G>1$. By the maximum principle (see Remark~\ref{mp}),
\[
G\leq \max\{1,\sup\{G(x,0):x\in(0,B_0)\}\}.
\]

Now denote\[
n_+=\sum_{k:\, q_k>0} n_k,\quad n_-=\sum_{k:\, q_k<0} n_k
\]

If $q_k>0$, since $d_k=a_k+q_kx\geq q_kx$, we have\[
\Theta\frac{n_kq_k^2}{d^2_k}\leq \Theta\frac{n_k}{x^2}=G\frac{n_k(B-x)}{Bx};
\]

If $q_k<0$, since $b_k=a_k+q_kB\geq 0$, we have\[
\Theta\frac{n_kq_k^2}{d^2_k}\leq \Theta\frac{n_k}{(B-x)^2}=G\frac{n_kx}{B(B-x)};
\]

Therefore, \[
(\dd_t-\Theta\dd_{xx})H\geq (m-(m+n_+)G)\frac{B-x}{Bx}+(1-(1+n_-)G)\frac{x}{B(B-x)}.
\]

Set $c_0=\left(1+\max\left\{\frac{n_+}{m},n_-\right\}\right)^{-1}>0$. Then $(\dd_t-\Theta\dd_{xx})H>0$ when $G<c_0$. By the maximum principle,
\[
G\geq \min\{c_0,\inf\{G(x,0): x\in(0,B_0)\}\}.
\]

\end{proof}

\begin{rmk}\label{mp}
Strictly speaking, we cannot use the maximum principle when the spatial interval $x\in[0,B(t)]$ moves with time. However, in our case, introducing the normalized coordinate
\[
\widetilde{x}=\frac{x}{B(t)}\in [0,1]
\]
can transform the domain into the fixed parabolic cylinder $[0,1]\times[0,T)$. For any $C^{2}$ function $H(x,t)$, define $\widetilde H(\widetilde{x},t)=H(B(t)\widetilde{x},t)$. The chain rule gives
\begin{equation}\label{eq:chain-rule}
\partial_{t}\widetilde H\Big|_{\widetilde{x}}
=\partial_{t}H\Big|_{x}+\frac{B'(t)\widetilde{x}}{B(t)}\,\widetilde H_{\widetilde{x}},
\quad
\widetilde H_{\widetilde{x}\widetilde{x}}=B(t)^{2}H_{xx}.
\end{equation}
At a spatial extremum point of $\widetilde H$, we have $\widetilde H_{\widetilde{x}}=0$. Thus
\[
\partial_{t}\widetilde H\Big|_{\widetilde{x}}=\partial_{t}H\Big|_{x},
\quad
\operatorname{sgn}(\widetilde H_{\widetilde{x}\widetilde{x}})=\operatorname{sgn}(H_{xx}).
\]
Consequently, at such a point the sign of $(\partial_{t}-\Theta\partial_{xx})H$ computed at fixed $x$ coincides with the sign of $\left(\partial_{t}-\frac{\Theta}{B(t)^{2}}\partial_{\widetilde{x}\widetilde{x}}\right)\widetilde H$ computed at fixed $\widetilde{x}$. Hence the preceding maximum-principle argument in the $x$-variable is justified by this change of variables.

In what follows, all maximum-principle arguments will be carried out at fixed $x$ without further mention of $\widetilde{x}$.
\end{rmk}

\begin{coro}\label{Theta0}
There exists $C>0$ such that\[
\Theta\leq Cx,\quad \left|\frac{q_k\Theta}{d_k}\right|\leq C
\] for all $t\in[0,T)$ and $x\in [0,B(t)]$.\label{C0Theta}
\end{coro}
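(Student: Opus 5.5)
The plan is to read both inequalities directly off the two-sided bound of Proposition~\ref{G}, using only the positivity conditions of Proposition~\ref{kahler} and the explicit form $d_k(x,t)=a_k(t)+q_kx$.

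\emph{Step 1: the bound $\Theta\le Cx$.} Proposition~\ref{G} gives
\[
\Theta\le C\,\frac{x(B(t)-x)}{B(t)}\le Cx,
\]
because $0\le B(t)-x\le B(t)$ for $x\in[0,B(t)]$. At the endpoints the inequality also holds, since $\Theta(0)=\Theta(B)=0$ by the boundary conditions in Proposition~\ref{kahler}.

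\emph{Step 2: the bound on $q_k\Theta/d_k$.} I split according to the sign of $q_k$; recall that $q_k\neq 0$. For the convention $k=0$ we have $d_0=x$ and $q_0=1$, and the claim is again Step 1.
\begin{itemize}
\item If $q_k>0$, then $d_k=a_k(t)+q_kx\ge q_kx$, since $a_k(t)>0$. Hence
\[
\frac{q_k\Theta}{d_k}\le\frac{\Theta}{x}\le C.
\]
\item If $q_k<0$, write $d_k=a_k+q_kx=b_k(t)+|q_k|(B(t)-x)$, using $b_k=a_k+q_kB$. Since $b_k(t)>0$, this gives $d_k\ge|q_k|(B-x)$. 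Together with Proposition~\ref{G}, $\Theta\le C(B-x)\,x/B\le C(B-x)$, so
\[
\frac{|q_k|\Theta}{d_k}\le\frac{\Theta}{B-x}\le C.
\]
\end{itemize}
In both cases the constant is the $C$ from Proposition~\ref{G}. These are exactly the comparisons already used inside the proof of Proposition~\ref{G}.

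\emph{Step 3: endpoints and degenerate times.} No step is a real obstacle; the only care needed is at the endpoints. At $x=0$ with $q_k<0$, or at $x=B$ with $q_k>0$, the denominator $d_k$ is positive and $\Theta=0$, so the quotient vanishes. The quotient extends continuously up to the boundary, so the bound holds on the closed interval $[0,B(t)]$.

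The bound is uniform in $t\in[0,T)$, even though $a_k$ or $b_k$ may tend to $0$ as $t\to T$. This is because the comparisons $d_k\ge q_kx$ and $d_k\ge|q_k|(B-x)$ use only $a_k(t)>0$ and $b_k(t)>0$, never a lower bound on their size. At $t=0$ the estimate holds trivially by smoothness of the initial metric, so the constant can be enlarged if necessary to cover it.
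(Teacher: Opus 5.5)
Your proof is correct and is essentially the paper's argument. Both read $\Theta\le Cx$ directly off Proposition~\ref{G}, then split on the sign of $q_k$, using $d_k\ge q_kx$ when $q_k>0$ and $d_k\ge |q_k|(B-x)$ (from $b_k\ge 0$) when $q_k<0$; your remarks on the endpoints and on $t=0$ are harmless extra care along the same route.
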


\begin{proof}

Proposition~\ref{G} gives
\[
\frac{x(B-x)}{CB}\leq \Theta \leq \frac{Cx(B-x)}{B}\leq Cx.
\]
If $q_k>0$, since $d_k=a_k+q_kx\geq q_kx$, we have\[
0<\frac{q_k\Theta}{d_k}\leq \frac{\Theta}{x}\leq \frac{C(B-x)}{B}\leq C.
\]If $q_k<0$, since $b_k=a_k+q_kB\geq 0$, we have\[
0<-\frac{q_k\Theta}{d_k}\leq \frac{\Theta}{B-x}\leq \frac{Cx}{B}\leq C.
\]

\end{proof}

\begin{coro}\label{Theta1}
There exists $C>0$ such that
\[
|\Theta_{x}|\leq C
\]
for all $t\in[0,T)$ and $x\in [0,B(t)]$.
\end{coro}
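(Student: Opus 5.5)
Since $\Theta$ is $C^1$ on $[0,B(t)]$ with $\Theta_x(0,t)=1$ and $\Theta_x(B(t),t)=-1$ by Proposition~\ref{kahler}(5), the bound $|\Theta_x|\le C$ reduces to an interior maximum-principle argument. The plan is to derive the evolution equation for $V=\Theta_x$ from \eqref{Theta}, show its reaction terms are harmless thanks to Proposition~\ref{G} and Corollary~\ref{Theta0}, and then trap $V$ between barriers whose derivative in time is controlled. Differentiating \eqref{Theta} in $x$ (at fixed $t$, with $d_k=a_k(t)+q_kx$) should give
\[
V_t=\Theta V_{xx}+(m-V)V_x+\Bigl(\frac{2(m-1)}{x^3}+\sum_{k=1}^r\frac{2n_kq_k^3}{d_k^3}\Bigr)\Theta^2-2\Bigl(\frac{m-1}{x^2}+\sum_{k=1}^r\frac{n_kq_k^2}{d_k^2}\Bigr)\Theta V .
\]
Here the $\Theta V_{xx}$ contribution coming from $\Theta_x\Theta_{xx}$ cancels against the $-V V_x$ term in the quadratic part, up to the displayed transport term $(m-V)V_x$. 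That term vanishes at a spatial extremum of $V$.

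At an interior spatial extremum I would check each remaining coefficient using earlier results. By Corollary~\ref{Theta0}, $\Theta/x\le C$ and $|q_k\Theta/d_k|\le C$. Hence $(m-1)\Theta^2/x^3\le C\,\Theta/x^2$ and $|n_kq_k^3\Theta^2/d_k^3|\le C\,n_kq_k^2\Theta/d_k^2$. Writing $W=\frac{m-1}{x^2}+\sum_k\frac{n_kq_k^2}{d_k^2}\ge0$, the reaction terms at such a point are therefore bounded by $\Theta W(C'-2V)$ in the sense
\[
(\partial_t-\Theta\partial_{xx})V\le \Theta W\,(C_1-2V),\qquad (\partial_t-\Theta\partial_{xx})V\ge -\Theta W\,(C_1+2V).
\]
So at a large positive maximum ($V>C_1/2$) the right side is $\le0$. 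At a very negative minimum ($V<-C_1/2$) it is $\ge0$. The zeroth-order terms of indefinite sign all carry the same factor $\Theta W$ as the linear damping term, which is exactly why they can be absorbed.

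The main obstacle is that $\Theta W$ blows up near the endpoints and $x$ is a moving domain. I would handle the moving domain by the normalized coordinate of Remark~\ref{mp}. The transport term coming from $B'(t)$ also vanishes at extrema, so the argument is unaffected. I also have to make sure the extremum is not attained at the boundary. But there $V=\pm1$, which is bounded, so the maximum principle gives
\[
\sup V\le\max\{1,C_1/2,\sup V(\cdot,0)\},\qquad \inf V\ge\min\{-1,-C_1/2,\inf V(\cdot,0)\}.
\]
One subtle point is that the coefficients $2q_k^3/d_k^3$ must be compared with $q_k^2/d_k^2$ via $|q_k\Theta/d_k|\le C$ rather than via $|q_k/d_k|$, since $d_k$ may degenerate as $t\to T$. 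Corollary~\ref{Theta0} is uniform in $t$, so this works. If the differentiated equation has additional terms I have mis-tracked, I would instead apply the same argument to $H_x$ with $H=\log G$ from Proposition~\ref{G}.
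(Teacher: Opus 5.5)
Your proposal is correct and matches the paper's proof. Your evolution equation for $V=\Theta_x$ is exactly \eqref{V}, whose reaction term the paper writes as $2\Theta\bigl[\frac{m-1}{x^2}(\frac{\Theta}{x}-V)+\sum_k\frac{n_kq_k^2}{d_k^2}(\frac{q_k\Theta}{d_k}-V)\bigr]$; the paper then uses $\Theta/x\le C_0$ and $|q_k\Theta/d_k|\le C_0$ from Corollary~\ref{Theta0}, together with the boundary values $V=\pm1$, in a first-crossing maximum-principle argument, just as you planned, so the fallback via $H_x$ is not needed.
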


\begin{proof}
Set $V=\Theta_x$. Differentiating \eqref{Theta} with respect to $x$, we obtain
\begin{equation}\label{V}
V_t=\Theta V_{xx}+(m-V)V_x+2\Theta\left[\frac{m-1}{x^{2}}\left(\frac{\Theta}{x}-V\right)+\sum_{k=1}^r \frac{n_{k} q_{k}^{2}}{d_{k}^{2}}\left(\frac{q_{k} \Theta}{d_{k}}-V\right)\right].
\end{equation}
Notice that
\[
V(0,t)=1, V(B(t),t)=-1.
\]

By Proposition~\ref{G} and Corollary~\ref{C0Theta}, there exists $C_0>0$ such that
\[
\left|\frac{q_k\Theta}{d_k}\right|\leq C_0,\quad 0<\frac{\Theta}{x}\leq C_0
\]
for all $t\in[0,T)$ and $x\in(0,B(t))$. Choose $C_1>\max\{C_0,1,\sup\{|V(x,0)|: {x\in(0,B_0)}\}\}$.

Assume that $V$ reaches the value $C_1$ for the first time at $(x_0,t_0)\in (0,B(t_0))\times(0,T)$. Since $C_1>1=V(0,t_0)>V(B(t_0),t_0)$, the point $x_0$ lies in the interior. At $(x_0,t_0)$ we then have
\[
V_x=0, V_{xx}\leq 0, V_t\geq 0.
\]

However, formula \eqref{V} gives \(V_t<0\), a contradiction. Thus $V<C_1$ for all $t\in[0,T)$ and $x\in[0,B(t)]$. A similar argument gives $V>-C_1$ throughout the same domain. Hence $|V|=|\Theta_x|\leq C_1$.

\end{proof}

The following simple but important proposition gives a uniform linear lower bound for $d_k$.

\begin{prop}\label{dk}
There exists $C>0$ such that\[
d_k(x,t)\geq C(T-t)
\]
for all $k\in\{1,\cdots,r\}$, $t\in[0,T)$, and $x\in[0,B(t))$.
\end{prop}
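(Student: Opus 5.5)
Since $d_k(x,t)=a_k(t)+q_kx$ is affine in $x$, its minimum over $x\in[0,B(t)]$ is attained at an endpoint. If $q_k>0$ the minimum is $a_k(t)$, attained at $x=0$. If $q_k<0$ the minimum is $b_k(t)=a_k(t)+q_kB(t)$, attained at $x=B(t)$. The bound is therefore independent of $\Theta$ and reduces to showing $a_k(t)\geq C(T-t)$ and $b_k(t)\geq C(T-t)$ for every $k$ and every $t\in[0,T)$.

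Both $a_k$ and $b_k$ are affine functions of $t$:
\[
a_k(t)=a_{k,0}-(\lambda_k-mq_k)t,
\qquad
b_k(t)=a_{k,0}+q_kB_0-(\lambda_k+q_k)t,
\]
where the second identity uses $B(t)=B_0-(m+1)t$. By the discussion preceding the definition of $T$, both are positive on $[0,T)$, since $\omega(t)$ is K\"ahler. By continuity they are nonnegative at $t=T$.

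For an affine function $f(t)=f(0)-\mu t$ with $f>0$ on $[0,T)$, I use the identity
\[
f(t)=f(T)+\mu(T-t),
\]
and split into cases according to the sign of $\mu$.

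\emph{Case $\mu>0$.} Since $f(T)\geq 0$, we get $f(t)\geq \mu(T-t)$.

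\emph{Case $\mu\leq 0$.} Here $f$ is nondecreasing, so $f(t)\geq f(0)>0$. Since $T-t\leq T$, this gives $f(t)\geq \frac{f(0)}{T}(T-t)$.

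In either case $f(t)\geq c_f(T-t)$ with
\[
c_f=\min\{\mu^+,\ f(0)/T\}>0 \quad\text{if } \mu>0, \qquad c_f=f(0)/T \quad\text{if } \mu\leq0.
\]
More simply, one may take $c_f=\min\{\mu,\,f(0)/T\}$ when $\mu>0$ and $c_f=f(0)/T$ otherwise.

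Applying this to $f=a_k$ and $f=b_k$ for each $k=1,\dots,r$, and taking $C$ to be the minimum of these finitely many positive constants, gives $d_k\geq C(T-t)$ on $[0,B(t)]\times[0,T)$. The constant $C$ depends only on the initial data, the $\lambda_k$, the $q_k$, and $T$.

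There is no genuine obstacle. The only points requiring care are checking that the endpoint minimum of $d_k$ is exactly $a_k$ or $b_k$ according to the sign of $q_k$ (note $q_k\neq0$), and confirming that positivity of $a_k$ and $b_k$ on $[0,T)$ is indeed available from Proposition~\ref{kahler} applied to the flow metric.
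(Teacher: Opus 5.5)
Your proof is correct and follows the paper's argument. The paper also reduces to the endpoint values $a_k(t)$ (for $q_k>0$) and $b_k(t)$ (for $q_k<0$) of the affine function, written in the rescaled form $D_k(y,t)=a_k(t)/(T-t)+q_ky$, and then uses $f(T)\ge 0$ with $T-t\le T$. The only difference is that the paper avoids your case split on the sign of $\mu$ by writing $f(t)/(T-t)=f(T)/(T-t)+\mu\ge f(T)/T+\mu=f(0)/T$, which gives the single constant $f(0)/T$ directly.
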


\begin{proof}
Denote $y=\frac{x}{T-t}$ and $D_k(y,t)=\frac{d_k((T-t)y,t)}{T-t}$.

Notice that \[
D_k(y,t)=\frac{a_k(t)}{T-t}+q_ky
\]
is a linear function of $y$, and $a_k(t),D_k(y,t)>0$ for $y\in\left[0,\frac{B(t)}{T-t}\right)$ by the positivity condition. We consider the two possible signs of $q_k$.

$(1)$ $q_k>0$. We need to verify that $\lim_{t\to T}D_k(0,t)>0$.

In this case, since $a_k(T)\geq 0$, we obtain \[
D_k(0,t)=\frac{a_k(T)}{T-t}+(\lambda_k-mq_k)\geq \frac{a_k(T)}{T}+(\lambda_k-mq_k)=\frac{a_{k,0}}{T}>0,
\]
Thus $\liminf_{t\to T}D_k(0,t)\geq\frac{a_{k,0}}{T}>0$.

$(2)$ $q_k<0$. A similar argument shows that \(\liminf_{t\to T}D_k\left(\frac{B(t)}{T-t},t\right)>0.\)

This proves the proposition.
\end{proof}

\section{Proof of Type~I Singularity}\label{proof}

In this section, we prove Theorem~\ref{thm} by treating separately the two possible cases: the fiber-collapse case $B(T)=0$ and the contraction case $B(T)>0$.

\subsection{Basic Setup}

Before the proof, we introduce some notational conventions.

We define the index sets
\[\begin{aligned}
S_a=\{k: a_k(T)=0\},&\quad S_b=\{k: b_k(T)=0\},\\
S=S_a\cup S_b,&\quad S^C=\{1,\cdots,r\}\setminus S.
\end{aligned}\]
For an index set $I\subset \{1,\cdots,r\}$, denote $N_{I}=\prod_{k\in I} N_k$ and let $\pi_I:X\to N_{I}$ be the corresponding projection.

Since $a_k$, $b_k$, $B$, and $d_k$ are linear functions, the following identities will be used repeatedly.

$(1)$ If $k\in S_a$, i.e. $a_k(T)=a_{k,0}-(\lambda_k-mq_k)T=0$, then $a_k(0)>0$ implies that $\lambda_k-mq_k>0$ and $a_{k,0}=(\lambda_k-mq_k)T$. For the same reason, $k\in S_b$ implies that $\lambda_k+q_k>0$ and $a_{k,0}=(\lambda_k+q_k)T-q_kB_0$.

$(2)$ If $B(T)=0$, i.e. $B_0=(m+1)T$, then\[
b_k(t)=a_k(T)+(\lambda_k+q_k)(T-t),
\]
which implies that $a_k(T)=0$ if and only if $b_k(T)=0$, i.e. $S=S_a=S_b$.

For parabolic rescaling, let \(\widetilde{g}=\frac{1}{T-t}g,\, y=\frac{x}{T-t}, \) and
\[\Psi(y,t)= \frac{\Theta((T-t)y,t)}{T-t},\, D_k(y,t)=\frac{d_k((T-t)y,t)}{T-t},\, L(t)=\frac{B(t)}{T-t}.
\]
Then the rescaled metric has the form
\[
\widetilde g(t)=\frac{\dif y^2}{\Psi(y,t)}+\Psi(y,t)\eta^2+yg_{FS}+\sum_{k=1}^r D_k(y,t)g_k.
\]
with scalar curvature
\begin{equation}\label{tRR}\begin{aligned}
\widetilde\RR=(T-t)R&=\sum_{k=1}^r \frac{n_k\lambda_k}{D_k}+\frac{m(m-1)}{y}-\frac{(P\Psi)_{yy}}{P}\\
&=\sum_{k=1}^r \frac{n_k\lambda_k}{D_k}+\frac{m(m-1)}{y}-\Psi_{yy}-2\frac{P_y}{P}\Psi_y-\frac{P_{yy}}{P}\Psi,
\end{aligned}
\end{equation}
where $P(y,t)=y^{m-1}\prod_{k=1}^r D_k^{n_k}$. Moreover, Proposition~\ref{G} is equivalent to the existence of a constant $C\geq 1$ such that
\begin{equation}
\frac{1}{C}\frac{y(L(t)-y)}{L(t)}\leq \Psi(y,t) \leq C\frac{y(L(t)-y)}{L(t)}.\label{Psi}
\end{equation}
Corollaries~\ref{Theta0} and~\ref{Theta1} then show that there exists $C>0$ such that
\[
|\Psi|\leq Cy,\quad \left|\frac{q_k\Psi}{D_k}\right|\leq C,\quad |\Psi_y|\leq C.
\]
Proposition~\ref{dk} gives a uniform positive lower bound for $D_k(y,t)$; that is, there exists $C>0$ such that $D_k(y,t)\geq C$.

Notice that if $B(T)=B_0-(m+1)T=0$, then \[
L(t)\equiv\frac{(m+1)(T-t)}{T-t}=m+1<+\infty
\] 
and if $B(T)>0$, then
\[
L(T)=\lim_{t\to T} \frac{B_0-(m+1)t}{T-t}=+\infty;
\]

\subsection{Curvature Estimates near $P_0$}

After the rescaling, we have \[
\widetilde{\RR}=(T-t)\RR ,\, |\Rm_{\tilde{g}(t)}|_{\tilde{g}(t)}=(T-t)|\Rm_{g(t)}|_{g(t)}.
\] Hence $\Rm(g(t))$ satisfies a Type~I estimate if and only if $\Rm(\tilde{g}(t))$ is uniformly bounded. 

The rescaled Riemann curvature tensor takes the following form.

\begin{lemma}\label{tRm}
For all $k,j=0,\cdots,r$, with the Greek indices ranging over the dimensions corresponding to their factor indices, write the components of $\Rm(\tilde{g}(t))$ as $\widetilde{\RR}_{ABCD}$. Then
\begin{align*}
\widetilde\RR_{\eta s s\eta}&=\frac12\Psi_{yy},\\
\widetilde\RR_{(k,\alpha)s\,s(k,\beta)}&=-\widetilde{P}_k\delta_{\alpha\beta},\quad\widetilde\RR_{(k,\alpha)s\,\eta(k,\beta)}=\widetilde{P}_kJ^k_{\alpha\beta},\quad \widetilde\RR_{\eta(k,\alpha)\,\eta(k,\beta)}=\widetilde{P}_k\delta_{\alpha\beta},\\
\widetilde\RR_{(k,\alpha)(k,\beta)(k,\gamma)(k,\delta)}&=\frac{1}{D_k}\,\RR^k_{\alpha\beta\gamma\delta}-\widetilde{Q}_{kk}\Bigl(\delta_{\alpha\gamma}\delta_{\beta\delta}-\delta_{\alpha\delta}\delta_{\beta\gamma}+J^k_{\alpha\gamma}J^k_{\beta\delta}-J^k_{\alpha\delta}J^k_{\beta\gamma}+2J^k_{\alpha\beta}J^k_{\gamma\delta}\Bigr),\\
\widetilde\RR_{(k,\alpha)(j,\beta)(k,\gamma)(j,\delta)}&=-\widetilde{Q}_{kj}\Bigl(\delta_{\alpha\gamma}\delta_{\beta\delta}+J^k_{\alpha\gamma}J^j_{\beta\delta}\Bigr), k\ne j,
\end{align*}
where $\widetilde{P}_k=\frac{q_k^2\Psi}{4D_k^2}-\frac{q_k\Psi_y}{4D_k}$ and $\widetilde{Q}_{kj}=\frac{q_kq_j\Psi}{4D_kD_j}$. All other components are either zero or determined by the Bianchi symmetries
\[
\widetilde\RR_{ABCD}=-\widetilde\RR_{ABDC}=-\widetilde\RR_{BACD}=\widetilde\RR_{CDAB},
\]
\end{lemma}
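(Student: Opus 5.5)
The plan is to derive Lemma~\ref{tRm} directly from the unrescaled curvature formulas (the corollary following Corollary~\ref{Rm}) by a scaling argument. I will not redo the Cartan computation. For a metric $\widetilde g=\lambda^{-1}g$ with $\lambda=T-t$ fixed at the given time, the Levi-Civita connection is unchanged. The $(0,4)$-tensor therefore scales as $\Rm(\widetilde g)=\lambda^{-1}\Rm(g)$. Orthonormal frames scale as $\widetilde e_A=\sqrt{\lambda}\,e_A$, so components in orthonormal frames satisfy $\widetilde\RR_{ABCD}=\lambda\,\RR_{ABCD}$. The rescaled orthonormal coframe is $\widetilde\theta^A=\lambda^{-1/2}\theta^A$, and it has exactly the same form as before: $\widetilde\theta^s=\dif y/\sqrt{\Psi}$, $\widetilde\theta^\eta=\sqrt{\Psi}\,\eta$ and $\widetilde\theta^{k,\alpha}=\sqrt{D_k}\,\bar\theta^{k,\alpha}$. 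This holds because $\Psi=\Theta/\lambda$, $D_k=d_k/\lambda$ and $\dif y=\dif x/\lambda$. So the index structure ($\delta$'s, $J$'s, which components vanish) carries over verbatim, and only the scalar coefficients need to be converted.

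The key step is to express $\lambda P_k$, $\lambda Q_{kj}$, $\lambda\cdot\frac12\Theta_{xx}$ and $\lambda\cdot\frac{1}{d_k}\RR^k$ in rescaled variables. I will use $x=\lambda y$, $\Theta(x)=\lambda\Psi(y)$ and $d_k=\lambda D_k$, which give $\Theta_x=\Psi_y$ and $\Theta_{xx}=\lambda^{-1}\Psi_{yy}$. Then
\[
\lambda\cdot\tfrac12\Theta_{xx}=\tfrac12\Psi_{yy},\qquad
\lambda\frac{q_k^2\Theta}{4d_k^2}=\frac{q_k^2\Psi}{4D_k^2},\qquad
\lambda\frac{q_k\Theta_x}{4d_k}=\frac{q_k\Psi_y}{4D_k},
\]
so $\lambda P_k=\widetilde P_k$. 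In the same way, $\lambda Q_{kj}=\widetilde Q_{kj}$ and $\lambda/d_k=1/D_k$. The components $\RR^k_{\alpha\beta\gamma\delta}$ refer to the fixed metrics $g_k$ (and $g_{FS}$ for $k=0$) and do not change.

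I need to check the conventions for $k=0$. There $d_0=x$, so $D_0=y$, and $q_0=1$; this is consistent with the term $y\,g_{FS}$ in $\widetilde g$. As an alternative check, one can rerun Proposition~\ref{connection} with $(\Theta,d_k,x)$ replaced by $(\Psi,D_k,y)$. This works because the structure equations only use $\dif\eta$ from \eqref{eta} together with the relations $\frac{\dif}{\dif s}\sqrt{d_k}=\frac{q_k\sqrt\Theta}{2\sqrt{d_k}}$ and $\frac{\dif}{\dif s}\sqrt\Theta=\frac{\Theta_x}{2}$, and these hold formally in the rescaled variables as well. This confirms that the formulas are literally the same with tildes.

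I do not expect a real obstacle, since this is bookkeeping. The only point needing care is the time-dependence of the scaling factor. It is harmless because curvature is evaluated at a fixed time $t$, and the definitions of $\Psi(y,t)$, $D_k(y,t)$ and $y$ are pointwise in $t$. The main check is that the factor $\lambda$ from the orthonormal-frame scaling cancels exactly against the derivative rescaling of $\Theta_{xx}$.
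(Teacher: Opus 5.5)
Your proposal is correct and takes the same route as the paper. The paper gives no separate computation: it obtains Lemma~\ref{tRm} from the unrescaled curvature corollary through the fixed-time scaling $\widetilde\RR_{ABCD}=(T-t)\RR_{ABCD}$ in orthonormal frames. Your substitutions $\Theta_x=\Psi_y$, $\Theta_{xx}=(T-t)^{-1}\Psi_{yy}$ and $d_k=(T-t)D_k$ (with $D_0=y$, $q_0=1$) correctly give $(T-t)P_k=\widetilde P_k$ and $(T-t)Q_{kj}=\widetilde Q_{kj}$.
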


\begin{lemma}\label{Rmbound}
Let \(Y\in (0,L(t)]\) be finite. Suppose that
$\widetilde\RR$ is bounded on $\{0\le y\le Y\}\times[0,T)$.
Then there exists $C_Y>0$ such that
\[
\sup_{\{0\le y\le Y\}\times[0,T)}
|\Rm(\widetilde g(t))|_{\widetilde g(t)}
\le C_Y.
\]
\end{lemma}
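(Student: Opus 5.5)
By Lemma~\ref{tRm}, every component of $\Rm(\widetilde g(t))$ is a combination of four kinds of terms. The first is $\frac{1}{D_k}\RR^k$ for $k\ge 1$, which is bounded because $D_k\ge C$ by Proposition~\ref{dk}. The second is the $k=0$ term $\frac{1}{y}\RR^{FS}$, together with $\widetilde{Q}_{00}=\frac{\Psi}{4y^2}$ and $\widetilde{P}_0=\frac{\Psi}{4y^2}-\frac{\Psi_y}{4y}$. The third consists of the mixed terms $\widetilde Q_{kj}$ and $\widetilde P_k$ for $k\ge1$. The fourth is $\frac12\Psi_{yy}$.

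The mixed terms are handled directly. We have $\widetilde Q_{kj}=\frac14\cdot\frac{q_k\Psi}{D_k}\cdot\frac{q_j}{D_j}$, which is bounded since $|q_k\Psi/D_k|\le C$ and $D_j\ge C$. For $j=0$ we use instead $\widetilde Q_{k0}=\frac{q_k}{4D_k}\cdot\frac{\Psi}{y}$ with $\Psi\le Cy$. Likewise $\widetilde P_k$ for $k\ge1$ is bounded by $|\Psi_y|\le C$. Hence the whole proof reduces to bounding $\Psi_{yy}$, $\frac1y$, $\frac{\Psi}{y^2}$ and $\frac{\Psi_y}{y}$ on $\{0\le y\le Y\}$.

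\emph{The terms singular at $y\to0$.} Here I would use the smoothness at $P_0$. The boundary conditions give $\Psi(0,t)=0$ and $\Psi_y(0,t)=1$. The combinations $\frac{\Psi}{y^2}-\frac{1}{y}$, $\frac{\Psi_y-1}{y}$ and $\frac{1}{y}(\RR^{FS}-\text{const})$ should combine so that the singular parts cancel; the FS curvature of $y g_{FS}$ with $\Psi\approx y$ is what makes the fiber $\BC^m$ smooth. Concretely, writing $\Psi=y+y^2 h(y,t)$, the expressions $\widetilde P_0$, $\widetilde Q_{00}$ and $\frac1y\RR^{FS}-(\text{terms})$ reduce to combinations of $h$ and $h_y$ (I would check this component by component). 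So it suffices to bound $\Psi_{yy}$, since $h=(\Psi-y)/y^2$ and $(\Psi_y-1)/y$ are averages of $\Psi_{yy}$ by Taylor's theorem.

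\emph{Bounding $\Psi_{yy}$, the main obstacle.} Bounding $\Psi_{yy}$ via the scalar curvature is the key step. From \eqref{tRR},
\[
\Psi_{yy}=\sum_k\frac{n_k\lambda_k}{D_k}+\frac{m(m-1)}{y}-2\frac{P_y}{P}\Psi_y-\frac{P_{yy}}{P}\Psi-\widetilde\RR ,
\]
with $\frac{P_y}{P}=\frac{m-1}{y}+\sum_k\frac{n_kq_k}{D_k}$. The terms with $D_k$ in the denominator are bounded on the region, using $D_k\ge C$, $|\Psi_y|\le C$ and $\Psi\le Cy\le CY$. The remaining singular part is
\[
\frac{m(m-1)}{y}-\frac{2(m-1)\Psi_y}{y}-\frac{(m-1)(m-2)\Psi}{y^2}.
\]
Substituting $\Psi_y=1+y\Psi_{yy}(\xi)$-type expansions, this becomes $-(m-1)\bigl(2\frac{\Psi_y-1}{y}+(m-2)\frac{\Psi-y}{y^2}\bigr)$, which is a weighted average of $\Psi_{yy}$ over $[0,y]$. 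This gives an integral identity of the form
\[
\Psi_{yy}+(m-1)\,\mathcal{A}[\Psi_{yy}](y)=F,\qquad |F|\le C_Y ,
\]
where $\mathcal{A}$ is a positive averaging operator. Equivalently, this is the ODE $\frac{1}{y^{m-1}}\bigl(y^{m-1}\cdot\bigr)$-type identity $(y^{m-1}\Psi)_{yy}/y^{m-1}=\text{bounded}$. I would integrate it explicitly: $(y^{m-1}\Psi)_{yy}=y^{m-1}G$ with $|G|\le C_Y$. With $y^{m-1}\Psi=y^m+O(y^{m+1})$ at $0$, integrating twice yields $|\Psi-y|\le C y^2$ and $|\Psi_y-1|\le Cy$, and then $|\Psi_{yy}|\le C$ from the identity itself. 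This also bounds $\frac{\Psi_y-1}{y}$ and $\frac{\Psi-y}{y^2}$, closing the estimate. The delicate point is keeping exact track of the cancellation in the $\BC P^{m-1}$ directions: $\frac{1}{y}\RR^{FS}$ must combine with $\widetilde Q_{00}$ to give $(\frac1y-\frac{\Psi}{y^2})\cdot$const, using that FS has constant holomorphic sectional curvature. I expect this to work since it is exactly the smoothness of $\BC^m$ at the origin.
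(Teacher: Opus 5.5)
Your argument is essentially the paper's: reduce the curvature bound to $\Psi_{yy}$, $(\Psi_y-1)/y$ and $(\Psi-y)/y^2$, then integrate the scalar-curvature identity for $\Psi-y$ twice from $y=0$ using $\Psi(0,t)=0$ and $\Psi_y(0,t)=1$. The only difference is the weight: you use $y^{m-1}$ and absorb the $W$-terms into the bounded right-hand side via $|\Psi_y|\le C$ and $\Psi\le Cy$, so the kernel is just $(s/y)^{m-1}\le 1$, whereas the paper uses $P=y^{m-1}W$ and must bound $P(s)/P(y)$. One correction is needed: the identity you integrate must read $(y^{m-1}(\Psi-y))_{yy}=y^{m-1}G$ with $|G|\le C_Y$, not $(y^{m-1}\Psi)_{yy}=y^{m-1}G$, since $(y^{m-1}\Psi)_{yy}/y^{m-1}=m(m-1)/y+O(1)$ is unbounded. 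The corrected form is exactly what your averaged identity $\Psi_{yy}+(m-1)\bigl(2\frac{\Psi_y-1}{y}+(m-2)\frac{\Psi-y}{y^2}\bigr)=O(1)$ says, so the argument goes through.
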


\begin{proof}
By Lemma~\ref{tRm}, there exists $C>0$ such that
\[\begin{aligned}
|\Rm(\widetilde{g}(t))|_{\widetilde{g}(t)}\leq C&\left[|\Psi_{yy}|+\frac{|\Psi_y-1|}{y}+\frac{|\Psi-y|}{y^2}\right.\\
&\left.+\sum_{k=1}^r\left(\frac{|\Rm^k|}{D_k}+\frac{|q_k\Psi_y|}{D_k}+\left(\frac{q_k^2}{D_k^2}+\frac{|q_k|}{yD_k}+\sum_{j=1}^r\frac{|q_kq_j|}{D_kD_j}\right)\Psi\right)\right].
\end{aligned}\]

Since $D_k$ has a uniform positive lower bound by Proposition~\ref{dk}, $|\Rm(\widetilde{g}(t))|_{\widetilde{g}(t)}$ is uniformly bounded once we prove that
\[
|\Psi_{yy}|,\, \frac{|\Psi_{y}-1|}{y},\, \frac{|\Psi-y|}{y^2}
\]
are uniformly bounded.

Let $F=\Psi-y$. Define\[\begin{aligned}
W(y,t)&=\frac{P(y,t)}{y^{m-1}}=\prod_{k=1}^r D_k(y,t)^{n_k},\\
E(y,t)&=\sum_{k=1}^r\frac{n_k\lambda_k}{D_k}-\widetilde\RR-2m\frac{W_y}{W}-y\frac{W_{yy}}{W}\\
&=F_{yy}+2\frac{P_y}{P}F_y+\frac{P_{yy}}{P}F.
\end{aligned}
\]
Then formula \eqref{tRR} is equivalent to
\begin{equation}\label{PF}
(PF)_{yy}=PE.
\end{equation}

Solving the ODE \eqref{PF} with $F(0,t)=F_y(0,t)=0$, we have\[
F(y,t)=\int_{0}^y(y-s)\frac{P(s,t)}{P(y,t)}E(s,t)\dif s,
\]\[
F_y(y,t)=\int_0^y\frac{P(s,t)}{P(y,t)}E(s,t)\dif s-\frac{P_y(y,t)}{P(y,t)}F(y,t),
\]\[
F_{yy}(y,t)=E(y,t)-2\frac{P_y}{P}F_y-\frac{P_{yy}}{P}F.
\]

Using again the uniform positive lower bound for $D_k$, a straightforward computation shows that

\begin{equation*}
\begin{aligned}
\frac{W_y}W=\sum_{k=1}^r\frac{n_kq_k}{D_k},\quad \frac{W_{yy}}W=\left(\sum_{k=1}^r\frac{n_kq_k}{D_k}\right)^2-\sum_{k=1}^r\frac{n_kq_k^2}{D_k^2},\quad \frac{P_y}{P}=\frac{m-1}{y}+\sum_{k=1}^r\frac{n_kq_k}{D_k},\\
\frac{P_{yy}}P=\frac{(m-1)(m-2)}{y^2}+\frac{2(m-1)}y\sum_{k=1}^r\frac{n_kq_k}{D_k}+\left(\sum_{k=1}^r\frac{n_kq_k}{D_k}\right)^2-\sum_{k=1}^r\frac{n_kq_k^2}{D_k^2}.
\end{aligned}
\end{equation*}
These formulas imply that $\frac{W_{yy}}{W}$ and $\frac{W_y}{W}$ are uniformly bounded and that $\frac{P_y}{P}=O\left(y^{-1}\right)$ and $\frac{P_{yy}}{P}=O(y^{-2})$. Moreover, $\frac{P(s,t)}{P(y,t)}$ is uniformly bounded for $y\in[0,Y]$ and $s\in[0,y]$. Combining these estimates with the uniform bound for $\widetilde\RR$, we see that $|E|$ is also uniformly bounded. Consequently, there exists $C_Y>0$ such that
\[
|F|=|\Psi-y|\leq C_Yy^2,\, |F_y|=|\Psi_y-1|\leq C_Yy,\, |F_{yy}|=|\Psi_{yy}|\leq C_Y.
\]

\end{proof}

\begin{rmk}\label{limitation}
The estimates in Lemma~\ref{Rmbound} do not extend directly to $Y=\infty$. First, the preceding argument does not provide the required bounds for $W$, $P$, and hence $\Psi$ on $y\in[0,+\infty)$. Moreover, the constants $C_Y$ may diverge as $Y\to\infty$, so this argument does not yield a bound independent of $Y$.

\end{rmk}

\subsection{Fiber Collapse ($B(T)=0$)}
In this section, we consider the case $B(T)=0$, i.e. $T=T_B$.

When $B(T)=0$, we have $L(t)\equiv m+1$, so the range of $y$ is exactly $[0,m+1]$ for all $t\in[0,T)$. Moreover,\[
a_{k}(t)=(\lambda_k-mq_k)(T-t),\, b_k(t)=(\lambda_k+q_k)(T-t),
\] for each $k\in S=S_a=S_b$.

The goal of this section is the following theorem.
\begin{thm}\label{thmB0}
If $B(T)=0$, then there exists $C>0$ such that\[
\sup_{X\times[0,T)}(T-t)|\Rm(g(t))|\leq C.
\]
\end{thm}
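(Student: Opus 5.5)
The plan is to reduce Theorem~\ref{thmB0} to Lemma~\ref{Rmbound} with $Y=L(t)=m+1$. When $B(T)=0$, the whole manifold is $\{0\le y\le m+1\}$, so it suffices to show that $\widetilde\RR=(T-t)\RR$ is uniformly bounded on $X\times[0,T)$. One subtlety is that Lemma~\ref{Rmbound} uses the ODE representation from $y=0$, and the bound $P(s,t)/P(y,t)\le C$ for $s\le y\le m+1$. Since $D_k$ is linear in $y$ with $D_k\ge C>0$ on the bounded range, the $W$-factor ratio is bounded. Moreover $y^{m-1}$ is increasing, so the ratio of the $y^{m-1}$ factors is at most $1$. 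Hence the lemma applies on the full range $[0,m+1]$, and in particular up to $D_\infty$, where the metric is also smooth. For extra safety one could run the same ODE argument from the $D_\infty$ end using $\Psi(L)=0$ and $\Psi_y(L)=-1$, but this should not be needed.

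To bound $\widetilde\RR$ I will use Lemma~\ref{JST1.3}. First I choose $\theta_{\widetilde X}$. The limiting class is $\zeta=[\omega_0]-Tc_1(X)=\sum_k a_k(T)[\omega_k]$, with $a_k(T)=0$ for $k\in S$. So $\Phi=\pi_{S^C}:X\to N_{S^C}$, and I take $\theta=\sum_{k\in S^C}a_k(T)\omega_k$. With this choice, $u$ is invariant under the $U(m)$ and torus symmetries, because $\omega(t)$ and $\theta$ are. Concretely, $\Ric(\omega)-\frac{1}{T-t}\omega+\frac{1}{T-t}\pi^*\theta$ has $\omega_k$-coefficients
\[
(\lambda_k-mq_k)-\frac{a_k(t)-a_k(T)}{T-t}=0
\]
by \eqref{a}. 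Hence $u$ is an explicit function of $\rho$, and $\ddbar u$ is given by $-\ddbar\bigl(\log(p\Theta)-m\rho+\tfrac{\varphi}{T-t}\bigr)$ up to a constant. So $u$ depends only on $x$, and the Ricci vertex $p_t$ is some point at a level $x=x_t\in[0,B(t)]$.

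The key geometric step is to bound the rescaled diameter of $X$, or at least the rescaled distance from any point to the Ricci vertex. Every point lies on some fiber $\pi^{-1}(z)$. The $\widetilde g$-distance from a point at level $y$ to a point at level $y'$ along a radial path is $\int dy/\sqrt{\Psi}$. By \eqref{Psi} with $L=m+1$, this integral is finite and uniformly bounded, since $\int_0^{m+1}dy/\sqrt{y(m+1-y)}<\infty$. Within a level set of a fiber, the $\BC P^{m-1}$ directions carry the factor $y\le m+1$, and the $S^1$-orbits have length $2\pi\sqrt{\Psi}\le C$. So each fiber has uniformly bounded $\widetilde g$-diameter. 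To reach the vertex from a different fiber, I move horizontally along the zero section $P_0$, where $y=0$ and the metric is $\sum_k D_k(0,t)g_k$. For $k\in S$, $D_k(0,t)=\lambda_k-mq_k$ is bounded. For $k\in S^C$, however, $D_k(0,t)\sim a_k(T)/(T-t)\to\infty$, so the base directions in $N_{S^C}$ blow up.

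This is the main obstacle. I will handle it through the symmetry: $u$ is constant along the orbits of $\omega_k$-isometries. More simply, the quantity $\widetilde\RR$ depends only on $(y,t)$. Therefore it suffices to bound $\widetilde\RR(y,t)$ for $y\in[0,m+1]$ at points of the single fiber through $p_t$. These points lie within $\widetilde g$-distance $\le C$ of $p_t$, so Lemma~\ref{JST1.3} gives $|\widetilde\RR|\le C$ there, and hence everywhere.

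Finally, Lemma~\ref{Rmbound} with $Y=m+1$ yields $|\Rm(\widetilde g)|\le C$, which is equivalent to $(T-t)|\Rm(g(t))|\le C$. I will check carefully that Lemma~\ref{JST1.3} is valid with this $\theta$, which is a smooth semi-positive form pulled back from $N_{S^C}$. I will also check the case $S^C=\emptyset$, where $\theta=0$ and $\widetilde X$ is a point; there the vertex argument is identical.
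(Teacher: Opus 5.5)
Your proposal is correct and follows essentially the same route as the paper. You bound $\widetilde\RR$ via Lemma~\ref{JST1.3} on a fiber through the Ricci vertex that has uniformly bounded rescaled diameter and on which $y$ takes every value in $[0,m+1]$, use that $\widetilde\RR$ depends only on $(y,t)$, and then apply Lemma~\ref{Rmbound} with $Y=m+1$. The only differences are cosmetic: you use the $\BC P^m$-fiber (really just the radial ray) through $p_t$ rather than the paper's larger fiber $F_z=\pi_{S^C}^{-1}(z)$, and you fix a specific symmetric $\theta$ whose $U(m)$-invariance you do not actually need.
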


Since $L(T)=m+1$ is finite, Lemma~\ref{Rmbound} gives a global bound for $\widetilde\Rm$ once we establish a global scalar curvature bound.

\begin{lemma}\label{R_B0}
If $B(T)=0$, then there exists $C>0$ such that\[
\sup_{X\times[0,T)}|\widetilde\RR(g(t))|\leq C.
\]
\end{lemma}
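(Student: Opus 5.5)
The plan is to use Lemma~\ref{JST1.3} at a Ricci vertex. The vertex estimate controls $\widetilde\RR$ on a ball of bounded rescaled radius around the vertex. Since $L\equiv m+1$, the fibers have bounded rescaled diameter, so it suffices to show that the whole of $X$ stays within bounded $\widetilde g$-distance of a Ricci vertex. The fibers are uniformly small, but the base directions $k\in S^C$ have $D_k\to\infty$, so $X$ itself does not have bounded rescaled diameter. The argument must therefore exploit symmetry: $\widetilde\RR$ depends only on $(y,t)$, so it is enough to reach, within bounded rescaled distance of a vertex, a point of the fiber through that vertex at every level $y\in[0,m+1]$.

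\textbf{Step 1 (bounded rescaled fiber diameter).} The fiber $\BC P^m$ over a point of $N$ carries the metric $\frac{\dif y^2}{\Psi}+\Psi\eta^2+yg_{FS}$. Estimate \eqref{Psi} with $L=m+1$ gives $\Psi\approx y(m+1-y)$. Hence the radial length
\[
\int_0^{m+1}\Psi^{-1/2}\,\dif y
\]
is uniformly bounded. The $S^1$-orbits and the $\BC P^{m-1}$-directions have lengths at most $C\sqrt{\Psi}$ and $C\sqrt{y}$, which are also bounded. So every fiber $\pi^{-1}(z)$ has $\widetilde g(t)$-diameter at most a uniform constant $C_0$.

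\textbf{Step 2 (choice of $\theta_{\widetilde X}$ and location of the vertex).} Since $B(T)=0$, the limit class $\zeta=\sum_k a_k(T)[\omega_k]$ is pulled back from $N_{S^C}$. So $\Phi$ factors through $\pi_{S^C}$, and we may take $\theta_{\widetilde X}=\sum_{k\in S^C}a_k(T)\omega_k$. With this choice, all terms in the defining equation of $u$ are $U(m)$-invariant and independent of the base point. Indeed,
\[
\Ric(\omega)-\frac{\omega}{T-t}+\frac{\theta}{T-t}
\]
equals $\ddbar$ of a function of $\rho$ plus a combination of the $\omega_k$. Its $\omega_k$-coefficients vanish: for $k\in S$ we have $a_k(t)=(\lambda_k-mq_k)(T-t)$, and for $k\in S^C$ we have $a_k(t)-a_k(T)=(\lambda_k-mq_k)(T-t)$. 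Hence, by uniqueness, $u=u(\rho,t)$ depends only on $x$ and $t$, and the Ricci vertices form a union of level sets $\{y=y_0(t)\}$.

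\textbf{Step 3 (conclusion).} Given any point $p'$ with level $y'$, choose a Ricci vertex $p$ in the same fiber $\pi^{-1}(\pi(p'))$; this is possible because the vertex set is a union of full level sets. By Step 1, $d_{\widetilde g}(p,p')\le C_0$. Lemma~\ref{JST1.3} then gives $|\widetilde\RR(p')|\le C(1+C_0^2)$, uniformly in $t$.

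\textbf{Main obstacle.} The delicate point is Step 2. One must verify rigorously that $\Phi$ factors through $\pi_{S^C}$ and that the chosen $\theta_{\widetilde X}$ is a legitimate smooth form on $\widetilde X$. Then one must check that the $\omega_k$-coefficients cancel exactly, so that $u$ is genuinely radial; any leftover base-dependent term would destroy the symmetry argument. If exact cancellation failed, the fallback would be to use the $N$-translation invariance of the Calabi ansatz on each factor. Since each $\omega_k$ is Kähler--Einstein, its isometries act transitively on the normalized data, and we would argue that $u$ is invariant under lifts of these isometries, applying uniqueness of the normalized potential.
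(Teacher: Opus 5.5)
Your proof is correct, but Steps~2--3 use a different mechanism from the paper's.

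\textbf{The paper's route.} The paper takes an arbitrary $\theta\in(\Phi^*)^{-1}\zeta$ and an arbitrary Ricci vertex $p_t$. It bounds the $\widetilde g(t)$-diameter of the single fiber $\pi_{S^C}^{-1}(\pi_{S^C}(p_t))$ and applies Lemma~\ref{JST1.3} there. It then transfers the bound to all of $X$ using only two facts: $\widetilde\RR$ depends only on $(y,t)$ by \eqref{tRR}, and $y$ takes every value in $[0,m+1]$ on that fiber. This is exactly the reduction you state in your opening paragraph. Together with your Step~1, it already finishes the proof. Your $\BC P^m$-fiber $\pi^{-1}(\pi(p_t))$ meets every level, so you do not even need the $N_S$ directions the paper includes.

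\textbf{Your route.} Step~2 instead makes the potential itself radial. With $\theta=\sum_{k\in S^C}a_k(T)\omega_k$, the $\omega_k$-coefficients do cancel exactly, since $a_k(t)-a_k(T)=(\lambda_k-mq_k)(T-t)$ for every $k$ and $a_k(T)=0$ for $k\in S$. Hence the vertex set is a union of $y$-level sets, and every point lies within distance $C_0$ of some vertex.

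\textbf{Comparison.} Your route gives a stronger statement: uniform rescaled proximity of every point to a Ricci vertex, which would also control non-radial quantities. It costs more than the paper's route:
\begin{itemize}
\item You must identify $\Phi$ with $\pi_{S^C}$.
\item You must justify ``by uniqueness.'' With $f=\log(p\Theta)-m\rho+\varphi/(T-t)$, the identity $\ddbar(u-f)=0$ holds only on $X_0$, so you must check that $f$ extends smoothly to $X$. Near $D_\infty$ one has $f=(\text{smooth})+\bigl(\tfrac{B(t)}{T-t}-(m+1)\bigr)\rho$, and the bracket vanishes precisely because $B(t)=(m+1)(T-t)$. Alternatively, use the $U(m)$-invariance of $u$ and $m\geq 2$ to show that the pluriharmonic function $u-f$ on $X_0$ is constant.
\end{itemize}

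\textbf{Fallback.} Your proposed fallback would not work, because K\"ahler--Einstein manifolds are not homogeneous in general. It is not needed, since the cancellation is exact.
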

\begin{proof}
For $z\in N_{S^C}$, define the fiber $F_z=\pi^{-1}_{S^C}(z)\cong \BP(\cO_{N_S}\oplus L_{S}^{\oplus m})$, where $L_S=\bigotimes_{k\in S} \pi^*_{N_k}L_k$. The induced metric is\[
\widetilde{g}(t)|_{F_z}=\frac{1}{\Psi}\dif y^2+\Psi \eta^2+yg_{FS}+\sum_{k\in S}D_k(y,t)g_k
\]

Notice that $y=0$ on $P_0$. Then
\[
\widetilde{g}(t)|_{P_0\cap F_z}=\sum_{k\in S}D_k(0,t)g_k=\sum_{k\in S}(\lambda_k-mq_k)g_k.
\]
Hence
\[\begin{aligned}
\diam(F_z,\widetilde{g}(t))
&\leq 2\int_0^{m+1}\frac{\dif y}{\sqrt{\Psi(y,t)}}+\diam (N_{S},\sum\limits_{k\in S}(\lambda_k-mq_k)g_k)\\
&\leq 2\pi\sqrt{(m+1)C_0}+\diam (N_{S},\sum\limits_{k\in S}(\lambda_k-mq_k)g_k),
\end{aligned}\]
where $C_0>0$ is the constant in formula \eqref{Psi}. Hence the diameter of $F_z$ is bounded uniformly in $z$ and $t$. Denote this uniform bound by $D$.

Now fix $t\in[0,T)$ and $\theta\in (\Phi^*)^{-1}{\zeta}$. Let $p_t$ be the Ricci vertex associated with $\theta$. Set $z_t=\pi_{S^C}(p_t)$. Then $p_t\in F_{z_t}\subset B_{\widetilde{g}(t)}(p_t,D)$, and Lemma~\ref{JST1.3} shows that
\[
|\widetilde\RR(q,t)|\leq C_1(1+D^2)
\]
on $B_{\widetilde{g}(t)}(p_t,D)$. Therefore,
\[
\sup_{F_{z_t}}|\widetilde\RR(q,t)|\leq C,
\]
where $C=C_1(1+D^2)>0$.

Since $\widetilde\RR$ depends only on $y$ and $t$ by formula \eqref{tRR}, and since $C$ is independent of $t$, the bound $|\widetilde\RR(q,t)|\leq C$ holds at every point $q\in X$ and every time $t\in[0,T)$. Indeed, the $y$-coordinate on $F_{z_t}$ ranges over the entire interval $[0,m+1]$. Thus, for every $q\in X$, we can choose $q'\in F_{z_t}$ such that $y(q)=y(q')$, and then $|\widetilde\RR(q,t)|=|\widetilde\RR(q',t)|\leq C$.
\end{proof}

\begin{proof}[Proof of Theorem~\ref{thmB0}]
Since $\sup_{X\times[0,T)}|\widetilde\RR|\le C$, Lemma~\ref{Rmbound} gives
\[
\sup_{X\times[0,T)}(T-t)|\Rm_{g(t)}|_{g(t)}=\sup_{X\times[0,T)}|\Rm_{\widetilde g(t)}|_{\widetilde g(t)}\le C.
\]

\end{proof}

\subsection{Contraction ($B(T)>0$)}

In this section, we consider the case $B(T)>0$, i.e. $T=T_{a,k}$ or $T=T_{b,k}$ for some $k$.

When $B(T)>0$, we have $L(T)=+\infty$, so the range of $y$ at time $t\in[0,T)$ is $[0,L(t))$, with $L(t)\to+\infty$ as $t\to T$. If $k\in S_a$, then \[
\lambda_k-mq_k>0,\, a_{k,0}=(\lambda_k-mq_k)T;
\] If $k\in S_b$, then \[\lambda_k+q_k>0,\,a_{k,0}=(\lambda_k+q_k)T-q_kB_0.\]

The goal of this section is the following theorem.
\begin{thm}\label{thmB}
If $B(T)>0$, then there exists $C>0$ such that\[
\sup_{X\times[0,T)}(T-t)|\Rm(g(t))|\leq C.
\]
\end{thm}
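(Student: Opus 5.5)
The plan has two regimes in the rescaled coordinate $y=x/(T-t)$. One is $y\le Y$ for a large fixed $Y$, where the Ricci vertex estimate and Lemma~\ref{Rmbound} apply. The other is $y\ge Y$, where we argue by contradiction through a Type~II blow-up, as in \cite{FT26a}.

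\textbf{Step 1: the region $y\le Y$.} First choose $\theta\in(\Phi^*)^{-1}\zeta$ so that a Ricci vertex lies at bounded $\widetilde g(t)$-distance from $P_0$. In the contraction case $\Phi$ contracts $P_0$ along the factors $N_S$, so $\zeta$ is pulled back from a variety on which $\{x=0\}$ maps to $N_{S^C}$. I would take $\theta$ of Calabi form, namely $\sum_k a_k(T)\omega_k+\ddbar\varphi_\theta(\rho)$ with a fixed profile having $\varphi_{\theta,\rho}\to 0$ at $x=0$. Then $u$ depends only on $(y,t)$, and $u$ can be computed explicitly from \eqref{phi}: up to constants it is $(T-t)^{-1}(\varphi-\varphi_\theta)$ minus $\varphi_t$ plus lower-order terms.

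Next compare $u$ near $y=0$ with $u$ at large $y$ using \eqref{Psi} and $|\Psi_y|\le C$. The goal is to show that $\min u$ is attained at some $y\le Y_0$ with $Y_0$ uniform. In the rescaled metric, the $\widetilde g$-distance from $\{y=0\}$ to $\{y=Y_0\}$ is $\int_0^{Y_0}\Psi^{-1/2}\,dy\le C\sqrt{Y_0}$. The factor $N_S$ at $y=0$ has $D_k(0,t)$ bounded for $k\in S_a$, so the fiber of $\pi_{S^C}$ restricted to $\{y\le Y\}$ has bounded rescaled diameter. Lemma~\ref{JST1.3} then bounds $\widetilde\RR$ on that fiber. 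Since $\widetilde\RR$ depends only on $(y,t)$, this gives a bound on all of $\{y\le Y\}$, and Lemma~\ref{Rmbound} yields $|\widetilde\Rm|\le C_Y$ there. The case $k\in S_b$ is symmetric under $x\mapsto B-x$, but then the large-$y$ end is also singular. I would treat it by running the same argument with respect to $D_\infty$, using $B-x$ in place of $x$.

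\textbf{Step 2: the region $y\ge Y$, by contradiction.} Suppose $\sup(T-t)|\Rm|=\infty$. Pick points $(p_i,t_i)$ that nearly maximize $(T-t)|\Rm|$, in Hamilton's Type~II sense, and set $Q_i=|\Rm|(p_i,t_i)$. Step 1 (or its mirror) forces $y(p_i)\to\infty$ and $L(t_i)-y(p_i)\to\infty$. At such points $\Psi\le C\min(y,L-y)$ while $D_k\ge c$, and for $k\in S^C$ the coefficient $D_k\to\infty$. Using Corollary~\ref{oneill}, the rescaled O'Neill quantity is $Q_i^{-1}|A|^2=Q_i^{-1}\sum_k n_kq_k^2\Psi/(2D_k^2)$. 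We need this to tend to zero, and it does because $Q_i(T-t_i)\to\infty$ while $(T-t)|A|^2$ is bounded whenever $\Psi/D_k^2\lesssim 1$. The bound $\Psi/D_k^2\lesssim1$ follows from $|q_k\Psi/D_k|\le C$ together with $D_k\ge c$.

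The $\BC P^{m-1}$ coefficient $y$ tends to infinity, so the $k=0$ terms $\Psi/y^2$ and $\Psi_y/y$ also vanish after rescaling. By the same mechanism, every component of $\widetilde\Rm$ other than $\widetilde\RR_{\eta ss\eta}=\tfrac12\Psi_{yy}$ is $O(1)$ in the $\widetilde g$-normalization, and hence tends to zero after rescaling by $Q_i(T-t_i)$. The blow-up limit, which exists by Perelman's noncollapsing together with Hamilton compactness, is therefore a complete eternal Kähler--Ricci flow. Its horizontal distribution is parallel and flat, so its universal cover splits as $\BC^{n+m-1}\times\Sigma$ with $\Sigma$ a complex curve of bounded, nonflat curvature. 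Being a Type~II limit, $\Sigma$ is a steady soliton, hence the cigar, which contradicts $\kappa$-noncollapsing.

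\textbf{Main obstacle.} The main obstacle is the first part of Step 1: locating the Ricci vertex uniformly near $P_0$ through an explicit choice of $\theta$ and control of $u(y,t)$. A secondary difficulty is making the Type~II splitting rigorous, which requires derivative bounds on $A$ and the parallelism of $\cH$ in the limit. I would handle these with Shi-type estimates on the rescaled sequence, following \cite{FT26a}.
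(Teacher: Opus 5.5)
There is a genuine gap, and it sits in the step you yourself call the main obstacle. Your Step~1 never shows that a Ricci vertex stays at uniformly bounded $y$, and the Calabi-form route you sketch cannot decide where it lies.

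\textbf{Locating the vertex.} For $\theta=\sum_k a_k(T)\omega_k+\ddbar\varphi_\theta(\rho)$ one gets $u=\log(p\Theta)-m\rho+(T-t)^{-1}(\varphi-\varphi_\theta)+c(t)$. Hence $u_\rho=\Psi_y+\bigl(\frac{m-1}{y}+\sum_k\frac{n_kq_k}{D_k}\bigr)\Psi-m+\frac{x-x_\theta}{T-t}$, where $x_\theta=\varphi_{\theta,\rho}$.

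\begin{itemize}
\item The bounds \eqref{Psi} and $|\Psi_y|\le C$ only say that the first three terms are $O(1)$. So critical points of $u$ occur wherever $x(\rho,t)$ comes within $O(T-t)$ of the fixed profile $x_\theta(\rho)$, and nothing you have locates this in $y$.
\item In fact $x-x_\theta\to B(t)-B(T)=(m+1)(T-t)$ as $\rho\to+\infty$. The critical-point condition therefore holds asymptotically near $D_\infty$, and nothing prevents the minimum from escaping there.
\item You would also need to check that such a $\theta$ is the pullback of a smooth form on the singular variety $\widetilde X$, as Lemma~\ref{JST1.3} requires.
\end{itemize}

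The paper avoids both problems as follows.
\begin{itemize}
\item It takes $\theta_q=\theta_0-\ddbar\eta_q$, where $\eta_q$ is a cut-off of the potential $\frac{1}{c_T}\log(\sum_j|Z_j|^2/|Z_0|^2)$ of $\theta_0$. Then $\theta_q\equiv0$ on $\Phi^{-1}(U_1)$ and $u_q=u_0+\Phi^*\eta_q/(T-t)$ with $|u_0|\le C_0/(T-t)$.
\item Since $\eta_q(q)=0$ and $\eta_q\ge R$ off $U_0$, choosing $R>2C_0$ forces every minimum into $\Phi^{-1}(U_0)$, where $\theta_q$ vanishes (Lemma~\ref{vertex}).
\item Because $u$ is $U(m)$-invariant and $du=0$ at the vertex, $\Hess u(V,V)=\Hess u(JV,JV)=0$ for a unit $V\in\cH_0$. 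Hence $\Ric_{\widetilde g}(V,V)=1$.
\item The explicit formula gives $\Ric_{\widetilde g}(V,V)=\frac1y\bigl(m-\Psi_y-(\frac{m-1}{y}+\sum_k\frac{n_kq_k}{D_k})\Psi\bigr)=O(1/y)$. This bounds $y(p_t,t)$ (Lemma~\ref{pt}).
\end{itemize}
This localisation-plus-invariance argument is the idea your proof is missing.

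\textbf{The $S_b$ mirror step.} Rerunning Step~1 with $B-x$ in place of $x$ is unfounded. For $m\ge2$ there is no such symmetry: $P_0$ has codimension $m$, while $D_\infty$ is a divisor on which the $\BC P^{m-1}$ coefficient is $x\approx B$. You would also need a separate vertex near $D_\infty$.

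The step is unnecessary, though. Drop the requirement $L(t_i)-y(p_i)\to\infty$, and your Step~2 covers every sequence with $y\to\infty$, including those approaching $D_\infty$. This works because $P_\cH$ extends smoothly across $D_\infty$ (Lemma~\ref{P}). Also, $K_ix$ and $K_id_k$ tend to infinity uniformly on parabolic balls there (Lemma~\ref{inf}), using $d_k\ge C(T-t)$ from Proposition~\ref{dk}.

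With that change your Step~2 matches the paper's Part~II. No Shi-type derivative bounds on $A$ are needed; it suffices that $|\nabla P_\cH|\to0$ and $\partial_\tau P_\cH=0$.
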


This case is more difficult than the case $B(T)=0$. As stated in Remark~\ref{limitation}, Lemma~\ref{Rmbound} applies only for finite $Y$. When $B(T)>0$, the coordinate $y$ can tend to infinity, so even a global scalar curvature bound would not yield a global bound for $\widetilde\Rm$ by that lemma; in fact, no such scalar curvature bound is available here. We therefore divide the proof of Theorem~\ref{thmB} into two parts.

\paragraph*{\bfseries Part I: Near the Zero Section}

The following proposition gives the required Type~I curvature bound on every fixed bounded interval of $y$ and will exclude blow-up sequences that remain near the zero section.

\begin{prop}\label{left}
Assume $B(T)>0$. For every finite $Y\in[0,\infty)$, there exists $C_Y>0$ independent of $t$ such that
\[
\sup_{\{0\leq y \leq Y\}\times[0,T)}(T-t)|\Rm(g(t))|\leq C_Y.
\] 
\end{prop}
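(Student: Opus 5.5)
The plan is to reduce the statement to a scalar curvature bound and then obtain that bound from the Ricci-vertex estimate, Lemma~\ref{JST1.3}. By Lemma~\ref{Rmbound}, applied with the given finite $Y$ (for $t$ near $T$ we have $L(t)>Y$), it suffices to show that $\widetilde\RR$ is bounded on $\{0\le y\le Y\}\times[0,T)$. By \eqref{tRR}, $\widetilde\RR$ depends only on $(y,t)$. So it is enough to find, for each $t$, one set $K_t\subset\{y\le Y\}$ meeting every level $\{y=y_0\}$ with $y_0\le Y$, such that $K_t$ lies within $\widetilde g(t)$-distance $C_Y$ of a Ricci vertex. Lemma~\ref{JST1.3} then gives $|\widetilde\RR|\le C(1+C_Y^2)$ on $K_t$, hence on all of $\{y\le Y\}$.

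\emph{Step 1: geometry of $\{y\le Y\}$.} For $z\in N_{S_a^C}$, let $F_z=\pi_{S_a^C}^{-1}(z)$. For $k\in S_a$ we have $D_k(0,t)=a_k(t)/(T-t)=\lambda_k-mq_k$, which is constant. Hence $P_0\cap F_z$, with the metric $\sum_{k\in S_a}D_k(0,t)g_k$, has uniformly bounded diameter. For $y\le Y\le L(t)/2$, \eqref{Psi} gives $\Psi\ge y/(2C)$. Therefore the radial segment at a fixed base point from level $y$ down to $P_0$ has $\widetilde g$-length at most $\int_0^Y\sqrt{2C/s}\,ds\le C\sqrt Y$. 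Joining two points by going radially down, moving inside $P_0\cap F_z$, and going radially up, we see that $F_z\cap\{y\le Y\}$ has $\widetilde g(t)$-diameter at most $C(1+\sqrt Y)$, uniformly in $z$ and $t$. Each such set meets every level $y\le Y$.

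\emph{Step 2: choosing $\theta$ so that the Ricci vertex stays at bounded $y$.} This is where the real work lies. We have $\zeta=B(T)c_1(H)+\sum_k a_k(T)[\omega_k]$ with $B(T)>0$. Take a Calabi-symmetric representative $\theta=\sum_k a_k(T)\omega_k+\ddbar\varphi_\theta(\rho)$ whose profile $x_\theta=(\varphi_\theta)_\rho$ is smooth, vanishes identically for $\rho\le\rho_0$, and increases to $B(T)$. Because $x_\theta\equiv0$ near $P_0$, $\theta$ restricted to the collapsed directions is compatible with being pulled back from $\widetilde X$. Verifying that $\theta$ is genuinely of the form $\Phi^*\theta_{\widetilde X}$ is one point to check. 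Comparing radial potentials in the defining equation of $u$ gives, up to a constant in $\rho$,
\[
u=\log(p\Theta)-m\rho+\frac{\varphi-\varphi_\theta}{T-t},
\]
and therefore
\[
u_\rho=\frac{(p\Theta)_x}{p}-m+\frac{x-x_\theta}{T-t}.
\]
Corollaries~\ref{Theta0} and~\ref{Theta1} bound $(p\Theta)_x/p=\Theta_x+\Theta\bigl(\frac{m-1}{x}+\sum_k\frac{n_kq_k}{d_k}\bigr)$ by a constant $C_0$.

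On $\{\rho\le\rho_0\}$ this yields $u_\rho\ge -C_0-m+y$, which is positive once $y>C_0+m$. On $\{\rho\ge\rho_0\}$, I would prove a uniform lower bound $x(\rho_0,t)\ge c>0$ together with $x-x_\theta\ge c'>0$. The first should follow from integrating $\Theta\ge x(B-x)/(CB)$ in $\rho$ and using $B(t)\ge B(T)>0$. The second can be arranged by choosing $x_\theta$ to increase slowly enough. Then $u_\rho\ge -C_0-m+c'/(T-t)>0$ for $t$ close to $T$; for $t$ bounded away from $T$ all bounds are trivial.

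It follows that $u(\cdot,t)$ is increasing in $\rho$ on the region $\{y>C_0+m\}$, so every Ricci vertex $p_t$ satisfies $y(p_t)\le C_0+m$. The main obstacle is exactly these two uniform lower bounds on $\{\rho\ge\rho_0\}$, and I expect that adjusting the choice of $x_\theta$ will be needed to secure them.

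\emph{Step 3: conclusion.} Put $z_t=\pi_{S_a^C}(p_t)$ and $Y'=\max\{Y,C_0+m\}$. By Step~1, $p_t$ and all of $F_{z_t}\cap\{y\le Y'\}$ lie within $\widetilde g(t)$-distance $C(1+\sqrt{Y'})$ of each other. Taking $K_t=F_{z_t}\cap\{y\le Y\}$ in the reduction above, Lemma~\ref{JST1.3} bounds $\widetilde\RR$ on $K_t$. Since $K_t$ meets every level $y\le Y$ and $\widetilde\RR$ depends only on $(y,t)$, $\widetilde\RR$ is bounded on all of $\{y\le Y\}$. Lemma~\ref{Rmbound} then gives the bound $C_Y$ on $(T-t)|\Rm(g(t))|$.
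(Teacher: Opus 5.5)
Your Steps 1 and 3 (bounded $\widetilde g(t)$-distance along radial rays, then Lemma \ref{JST1.3} and Lemma \ref{Rmbound}) match the paper. Your formula for $u_\rho$ is also correct. On $\{x_\theta=0\}$ it reads
\[
u_\rho=\frac{(P\Psi)_y}{P}-m+y=y\bigl(1-\Ric_{\widetilde g}(V,V)\bigr),\qquad V\in\cH_0 \text{ unit},
\]
which is exactly the identity behind the paper's Lemma \ref{pt}.

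The gap is the localization on $\{\rho\ge\rho_0\}$, and that is the heart of the proposition. Your claim $x-x_\theta\ge c'>0$ there is false for every admissible $\theta$. The condition $[\theta]=\zeta$ forces $x_\theta\to B(T)$, while $x\to B(t)$ as $\rho\to\infty$. Hence $x-x_\theta\to(m+1)(T-t)$ at $D_\infty$, and no choice of $x_\theta$ changes this. Also $\frac{(p\Theta)_x}{p}\to-1$ at $x=B$, so $u_\rho\to0$ at $D_\infty$, as it must since $u$ is smooth across $D_\infty$. Thus $u_\rho\ge -C_0-m+c'/(T-t)$ cannot hold near $D_\infty$.

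Whether $u$ increases toward $D_\infty$ is then a first-order question. For instance, with $x_\theta\equiv B(T)$ near $D_\infty$ one finds
\[
u_\rho=(B-x)\Bigl[-\Theta_{xx}(B)+\frac{p_x}{p}(B)-\frac{1}{T-t}\Bigr]+O\bigl((B-x)^2\bigr).
\]
Here $\frac{p_x}{p}(B)$ is bounded above, so this is negative for $t$ near $T$ unless $-\Theta_{xx}(B(t),t)$ grows at least like $(T-t)^{-1}$. In general the sign involves $\Theta_{xx}$ at $x=B$, the normal coefficient of $\omega(t)$ at $D_\infty$, and $\frac{p_x}{p}(B)$, which tends to $-\infty$ when $S_b\neq\emptyset$. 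The paper controls none of these. So nothing in your argument prevents the Ricci vertex from lying on $D_\infty$, where $y=L(t)\to\infty$, and Step 2 does not give $y(p_t)\le C$.

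What is missing is a comparison of values of $u$, rather than pointwise monotonicity, together with some global control of $u$. The paper obtains this from a barrier (Lemma \ref{vertex}):
\begin{itemize}
\item Fix $q\in\Phi(P_0)$ and set $\theta_q=\theta_0-\ddbar\eta_q$, where $\eta_q(q)=0$ and $\eta_q\ge R$ off a neighborhood $U_0$ of $q$ disjoint from $\Phi(D_\infty)$. Moreover $\theta_q\equiv0$ on a larger neighborhood $U_1$.
\item Take a reference potential $u_0$ with $|u_0|\le C_0/(T-t)$, taken from \cite{JST23}. Then $u_q=u_0+\Phi^*\eta_q/(T-t)$ has all its minima in $\Phi^{-1}(U_0)$ once $R>2C_0$.
\item Only after this is the near-zero-section identity used. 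In Lemma \ref{pt} this is done via the second variation of $u$ along $\cH_0$ at the minimum, because $\theta_q$ is not radial.
\end{itemize}

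If you want to keep a Calabi-symmetric $\theta$, you need an analogous comparison of $u$ on $\{\rho\ge\rho_0\}$ with its values near $P_0$. That requires uniform control of $u$, or of $\varphi$ and of $\omega(t)$ near $D_\infty$, which your proposal does not supply. Separately, you must check that your $\theta$ is of the form $\Phi^*\theta_{\widetilde X}$ near $D_\infty$ as well when $S_b\neq\emptyset$, not only near $P_0$.
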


To apply Lemma~\ref{JST1.3} uniformly in this bounded region, we need to find a Ricci vertex that remains over a fixed neighborhood of $\Phi(P_0)$.

\begin{lemma}\label{vertex}
Fix \(q\in\Phi(P_0)\). Then there exist $\theta_q\in (\Phi^*)^{-1}\zeta$ and open neighborhoods $U_0,U_1$ of $q$ with $U_0\Subset U_1$ such that, for every \(t<T\), there exists a Ricci vertex \(p_t\) associated with \(\theta_q\) satisfying
\[
\theta_{q}|_{U_1}=0,\quad p_t\in\Phi^{-1}(U_0).
\]

\end{lemma}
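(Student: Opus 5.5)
The plan is to build $\theta_q$ in two stages: first kill a reference form near $q$ by subtracting a cut-off local potential, then add a large nonnegative bump supported away from $q$ that forces the minimum of the weighted Ricci potential into $\Phi^{-1}(U_0)$. Throughout I use that changing $\theta\mapsto\theta-\ddbar f$, with $f$ a smooth function on $\widetilde X$ (a restriction of an ambient smooth function), keeps the class $\zeta$. From the defining equation $\Ric(\widetilde\omega)-\widetilde\omega=-\frac{e^s}{T}\Phi^*\theta-\ddbar u$, the corresponding potential is $u+\frac{e^s}{T}\Phi^*f$ up to an additive constant (renormalized so the minimum is $1$). Since $\frac{e^s}{T}=\frac{1}{T-t}$, I write this as $u+\frac{1}{T-t}\Phi^*f$.

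\textbf{Step 1 (vanishing near $q$).} Start from $\theta^0=\frac1c\omega_{FS}|_{\widetilde X}$ for the appropriate constant $c$, so that $\Phi^*\theta^0\in\zeta$; here the ambient form is on $\BC P^M$. Near $q$, the ambient Fubini--Study form has a smooth local potential $\psi$, so $\theta^0=\ddbar\psi$ on a neighborhood $U_2$ of $q$. Choose a cutoff $\chi$ with $\chi\equiv1$ on $U_1\Subset U_2$ and support in $U_2$, and set $\theta_1=\theta^0-\ddbar(\chi\psi)$. Then $\theta_1|_{U_1}=0$ and $\theta_1$ represents $\zeta$ after pullback. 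Let $u_1$ be its weighted Ricci potential.

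\textbf{Step 2 (oscillation bound).} For $u_1$ I would show $\operatorname{osc}_X u_1(\cdot,t)\le \frac{C}{T-t}$. First, the Li--Yau gradient estimate gives $|\nabla\sqrt{u_1}|_{\omega(t)}\le C(T-t)^{-1/2}$. Second, I need a uniform bound on $\diam(X,g(t))$, which I would obtain from the explicit form of $g$. The radial length is $\int_0^B \Theta^{-1/2}\,\dif x$, which is bounded by Proposition~\ref{G} because $\int_0^B\sqrt{B/(x(B-x))}\,\dif x=\pi\sqrt B$. The coefficients $x$ and $d_k$ are bounded above, and $\Theta$ is bounded. Integrating the gradient bound along a geodesic from the minimum point (where $u_1=1$) then gives $u_1\le C(1+(T-t)^{-1/2})^2\le \frac{C}{T-t}$. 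The constant $C$ here depends on $\theta_1$ only, and $\theta_1$ is now fixed.

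\textbf{Step 3 (pushing the vertex).} Choose an ambient smooth function $f\ge0$ with $f\equiv0$ on $U_0$ (where $U_0\Subset U_1$ is any prescribed neighborhood of $q$), $f\ge1$ off a slightly larger $U_0'\Subset U_1$, and $\operatorname{supp}f\cap U_0=\emptyset$. Set $\theta_q=\theta_1-A\,\ddbar f$. Then $\theta_q|_{U_1}$ is no longer identically zero, since $\ddbar f$ need not vanish on $U_1\setminus U_0'$. To repair this, I instead require $f\equiv0$ on all of $U_1$ and $f\ge1$ outside some $U_1'\Supset U_1$, and I replace the roles of $U_0,U_1$ by $U_1, U_1'$. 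Concretely, I shrink in Step 1 so that $\theta_1$ vanishes on a larger set $U_1'$, choose $f=0$ on $U_1$ and $f\ge1$ outside $U_1'$, and take $U_0:=U_1$ in the conclusion. After renaming, this still gives $U_0\Subset U_1$ and $\theta_q|_{U_1}=0$.

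The new potential is $u_q=u_1+\frac{A}{T-t}\Phi^*f-\text{const}$. On $\Phi^{-1}(U_0)$ we have $f=0$, and $\Phi^{-1}(U_0)\ne\emptyset$ because $q\in\Phi(P_0)$. So the pre-normalization minimum of $u_q$ is at most $\sup u_1\le\frac{C}{T-t}$. Outside $\Phi^{-1}(U_1')$ the pre-normalization values are at least $1+\frac{A}{T-t}$. Taking $A>C$, every minimum point lies in $\Phi^{-1}(U_1')$. I then rename so that the vertex lies in $\Phi^{-1}(U_0)$ with $U_0\Subset U_1$ and $\theta_q|_{U_1}=0$, after arranging the nested neighborhoods $U_0\Subset U_1\Subset U_1'$ appropriately from the start. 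The essential point is that $A$ is chosen \emph{after} the constant of Step 2 is fixed, so the construction is not circular.

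\textbf{Main obstacle.} I expect the bookkeeping of the nested neighborhoods to be the delicate part: the form must vanish on the set $U_1$, while the bump $f$ must vanish where the vertex is allowed to lie and be large elsewhere. The other point needing care is the uniform diameter bound in Step 2, which I plan to get from the explicit metric together with Proposition~\ref{G} rather than from Perelman's estimates.
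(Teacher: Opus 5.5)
\textbf{The gap is in Step 3, and the nesting issue cannot be fixed by renaming.} After your repair, $\theta_q=\theta_1-A\ddbar f$ is only guaranteed to vanish on $U_1$, where $f\equiv 0$. The comparison, however, places the vertex only in $\Phi^{-1}(U_1')$, with $U_1\Subset U_1'$. The lemma needs the opposite nesting: the vertex set must lie compactly inside the vanishing set, and renaming cannot reverse an inclusion.

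A different bump will not help either. Suppose $\theta_1$ already vanishes on the set $W$ where $\theta_q$ must vanish. Then $\Phi^*f$ is pluriharmonic on $\Phi^{-1}(W)$. For any open $V\Subset W$, the minimum principle for this harmonic function on the compact set $\Phi^{-1}(\overline V)$ gives $\inf_V f\ge\min_{\partial V}f\ge\inf_{\widetilde X\setminus V}f$. So a bump that is pluriharmonic where $\theta_q$ vanishes can never make the complement of $\Phi^{-1}(V)$ more expensive than $\Phi^{-1}(V)$, which is exactly what your oscillation comparison requires. The only term that could push the vertex is $\chi\psi$ from Step 1. As you set it up, it is a local potential on a small $U_2$ times a cutoff that decays to $0$. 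It is therefore not arranged to be smaller near $q$ than off $U_0$ by more than the oscillation constant.

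\textbf{The missing idea is to let a single function do both jobs.} On the vanishing set it must be a (strictly plurisubharmonic) potential of the reference form itself, and it must be large off $U_0$. The paper proceeds as follows.
\begin{itemize}
\item It takes a hyperplane $H_\infty=\{Z_0=0\}$ not containing $q$. On $\{Z_0\neq0\}$ it uses the global potential $h=\frac{1}{c_T}\log\bigl(\sum_j|Z_j|^2/|Z_0|^2\bigr)\ge0$ of $\theta_0=\frac{1}{c_T}\omega_{FS}|_{\widetilde X}$ (your $\theta^0$). This $h$ satisfies $h(q)=0$ and $h\to+\infty$ at $H_\infty$.
\item It sets $\eta_q=\chi_R(h)$, where $\chi_R$ is nondecreasing, $\chi_R(s)=s$ for $s\le 2R$, and $\chi_R\equiv 3R$ for $s\ge 4R$. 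It takes $U_0=\{h<R\}\Subset U_1=\{h<2R\}$.
\item Then $\theta_q=\theta_0-\ddbar\eta_q$ vanishes on $U_1$, and $\eta_q\ge R$ off $U_0$.
\item The potential of $\theta_q$ is $u_0+\Phi^*\eta_q/(T-t)$ up to a constant, where $u_0$ belongs to the \emph{fixed} form $\theta_0$.
\end{itemize}

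Your Step 2 (Li--Yau gradient bound plus the explicit diameter bound from Proposition~\ref{G}) is a sound, self-contained substitute for the paper's appeal to the Jian--Song--Tian normalization $|u_0|\le C_0/(T-t)$. It gives $\operatorname{osc}_X u_0(\cdot,t)\le C_0/(T-t)$, but it must be applied to $\theta_0$ rather than to $\theta_1$. In this construction the neighborhoods depend on $R$. Choosing $R>C_0$ is non-circular precisely because $C_0$ does not depend on them.
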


\begin{proof}
First, we construct $\theta_q$. Let \(A_T\) be the base-point-free line bundle defining
\(\Phi:X\to\widetilde X\subset\mathbb CP^M\). Since \(B(T)>0\), the restriction of \(A_T\) to each projective fiber has positive degree. The corresponding space of sections decomposes according to the homogeneous degree in the fiber coordinates. On \(P_0\), only the degree-zero coordinate block can be nonzero, whereas on \(D_\infty\), only the highest-degree coordinate block can be
nonzero.

Consequently, there exists a projective hyperplane \(H_\infty\subset\mathbb CP^M\)
such that \(\Phi(D_\infty)\subset H_\infty\) and \(q\notin H_\infty\).
After a projective change of coordinates, we may write \(H_\infty=\{Z_0=0\}\) and \(q=[1:0:\cdots:0]\).

Let \(\theta_0=\frac{1}{c_T}\omega_{\mathrm{FS}}\big|_{\tilde X}\), where $c_T$ is determined by $c_T\zeta=c_1(A_T)$. Then \([\Phi^*\theta_0]=\zeta\). On the affine chart \(\mathcal U=\mathbb CP^M\setminus H_\infty=\{Z_0\ne0\}\), define
\[
h=\frac{1}{c_T}\log\frac{\sum_{j=0}^{M}|Z_j|^2}{|Z_0|^2}.
\]
Then \(h\ge0\), \(h(q)=0\), and \(\sqrt{-1}\partial\bar\partial h=\theta_0\) on \(\widetilde X\cap\mathcal U\). Moreover, \(h\to+\infty\) as one approaches \(H_\infty\).

Let \(R>0\) be a constant to be determined below. Choose a smooth nondecreasing function \(\chi_R:[0,\infty)\longrightarrow\mathbb R\) such that
\[
\chi_R(s)=s\quad\text{for }s\le2R,
\]
\[
\chi_R(s)=3R
\quad\text{for }s\ge4R,
\]

Define \(\eta_q=\chi_R(h)\) on \(\widetilde X\cap\mathcal U\). Since \(h\to+\infty\) near \(H_\infty\) and \(\chi_R\) is constant on \([4R,\infty)\), the function \(\eta_q\) extends smoothly
across \(\widetilde X\cap H_\infty\) by setting it equal to \(3R\) there. Thus \(\eta_q\) is a globally defined smooth function on \(\widetilde X\).

Set
\[
U_0=\{h<R\}\cap\widetilde X,\quad U_1=\{h<2R\}\cap\widetilde X.
\]
The properness of \(h\) toward \(H_\infty\) gives
\[
q\in U_0\Subset U_1\subset\widetilde X\setminus\Phi(D_\infty).
\]
Furthermore,
\[
\eta_q(q)=0,\quad\eta_q\ge R
\quad\text{on }\widetilde X\setminus U_0.
\]

Define
\[
\theta_q=\theta_0-\sqrt{-1}\partial\bar\partial\eta_q.
\]
Since \(\eta_q\) is globally defined,
\[
[\Phi^*\theta_q]=[\Phi^*\theta_0]=\zeta.
\]
On \(U_1\), one has \(h<2R\), and hence
\(\eta_q=\chi_R(h)=h\). Thus
\[
\theta_q|_{U_1}=\theta_0-\sqrt{-1}\partial\bar\partial h=0.
\]

It remains to locate the corresponding Ricci vertex. Let
\(u_0(\cdot,t)\) be a weighted Ricci potential associated with
\(\theta_0\), with its time-dependent additive constant chosen
so that
\[
|u_0(\cdot,t)|
\le
\frac{C_0}{T-t}
\]
for some \(C_0<\infty\) independent of \(t\). This normalization is available from the construction of the basic weighted Ricci potential in \cite[Section 4.2]{JST23}. 

Up to a function of time \(t\), the weighted Ricci potential associated with \(\theta_q\) is
\[
u_q=u_0+\frac{\Phi^*\eta_q}{T-t}.
\]
Indeed, we can verify directly that
\[
\Ric(\omega(t))-\frac{1}{T-t}\omega(t)=-\frac{1}{T-t}\Phi^*\theta_q-\sqrt{-1}\partial\bar\partial u_q.
\]

Notice that \(\eta_q(q)=0\). Therefore
\[
\inf_{\Phi^{-1}(q)}u_q(\cdot,t)
\le
\frac{C_0}{T-t}.
\]
On the other hand, on \(X\setminus\Phi^{-1}(U_0)\) one has
\[
u_q\ge-\frac{C_0}{T-t}+\frac{R}{T-t}=\frac{R-C_0}{T-t}.
\]

We now choose \(R>2C_0\). Then no global minimum of \(u_q(\cdot,t)\) can lie in \(X\setminus\Phi^{-1}(U_0)\). Hence any minimum point \(p_t\) of \(u_q(\cdot,t)\) satisfies
\[
p_t\in\Phi^{-1}(U_0).
\]
By definition, \(p_t\) is a Ricci vertex associated with \(\theta_q\).

\end{proof}

\begin{lemma}\label{pt}
There exists $C>0$ such that, for every $t\in[0,T)$,
\[
y(p_t,t)\leq C,
\]
where $p_t$ is the Ricci vertex defined in Lemma~\ref{vertex}.
\end{lemma}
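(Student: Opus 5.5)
The plan is to read off $y(p_t,t)$ from the first-order condition $\partial_\rho u_q=0$ at the minimum point $p_t$. First, on $\Phi^{-1}(U_1)$ I will compare $u_q$ with an explicit Calabi-symmetric potential. From the Ricci form computed in Section~\ref{ansatz} and $\omega(t)=\sum_k a_k(t)\omega_k+\ddbar\varphi$, define
\[
v(\rho,t)=\log(p\Theta)-m\rho+\frac{\varphi}{T-t}.
\]
This function satisfies
\[
\Ric(\omega)-\frac{1}{T-t}\omega=-\ddbar v+\sum_{k=1}^r\Bigl(\lambda_k-mq_k-\frac{a_k(t)}{T-t}\Bigr)\omega_k .
\]
By Lemma~\ref{vertex}, $\theta_q=0$ on $U_1$. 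Hence on $\Phi^{-1}(U_1)$ the function $f=u_q-v$ satisfies $\ddbar f=\sum_k c_k(t)\omega_k$, a form pulled back from $N$. Using $\theta_q=\theta_0-\ddbar\eta_q$, $\theta_0=\ddbar h$ on $U_1$, and the choice of $u_0$, I expect to write
\[
f=\frac{\Phi^*h+\beta}{T-t},
\]
where $\beta$ is (up to time-dependent constants) a potential of a closed form pulled back from $N$; in particular $\partial_\rho\beta=0$. The $\rho$-dependence of $f$ then sits entirely in $\Phi^*h/(T-t)$.

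Second, at the interior minimum $p_t\in\Phi^{-1}(U_0)$ we have $\partial_\rho u_q=0$ (differentiate along the real radial vector field generated by $\xi\mapsto e^{\tau}\xi$). Next I compute $\partial_\rho v$. Since $\varphi_\rho=x$, and by \eqref{x},
\[
\partial_t x=\Theta(\log p\Theta)_x-m,
\]
we get
\[
\partial_\rho v=\Theta(\log p\Theta)_x-m+\frac{x}{T-t}.
\]
After rescaling this reads
\[
\partial_\rho v=\Psi(\log P\Psi)_y-m+y
=\Psi_y+(m-1)\frac{\Psi}{y}+\sum_k\frac{n_kq_k\Psi}{D_k}-m+y .
\]
By \eqref{Psi} and the consequences of Corollaries~\ref{Theta0}, \ref{Theta1} listed in the basic setup, the first three terms are uniformly bounded. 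Hence $\partial_\rho v=y+O(1)$ uniformly in $t$.

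Third, I must control $\partial_\rho(\Phi^*h)/(T-t)$ at $p_t$; this is the step I expect to be the main obstacle. The degree-zero block of $\Phi$ in the fiber coordinates does not depend on $\xi$. The remaining coordinates are homogeneous of positive degree in $\xi$, so near $P_0$ they vanish to order at least $|\xi|$. Thus $\Phi^*h=h_0(z)+O(e^{\rho})$, and $\partial_\rho\Phi^*h=O(e^{\rho})$ on $\Phi^{-1}(U_1)$. By the closing condition at $P_0$ together with $\Theta_x(0)=1$ and Proposition~\ref{G}, $e^{\rho}$ is comparable to $x$ near $P_0$, with constants controlled on $\Phi^{-1}(\overline{U_0})$. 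This yields
\[
\bigl|\partial_\rho f\bigr|\le \frac{C_1x}{T-t}=C_1y .
\]
Here $C_1$ can be made small by shrinking $R$ (that is, $U_0$), since $h$ is comparable to $\sum_{j\ge1}|Z_j|^2$ near $q$; alternatively it can be absorbed as below. The delicate point is that the comparison $e^\rho\sim x$ must be uniform in $t$ on $\Phi^{-1}(U_0)$. I would first try to extract this from Proposition~\ref{G} via $\rho=\int \dif x/\Theta$ normalized at a fixed reference value. If that fails, I would replace $h$ by its $U(m)\times S^1$-average, which changes neither the bound nor the minimality argument of Lemma~\ref{vertex}.

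Finally, combining the two computations at $p_t$ gives
\[
0=\partial_\rho u_q=y+O(1)+O(C_1y).
\]
With $C_1\le\tfrac12$ this gives $y(p_t,t)\le C$, uniformly in $t\in[0,T)$, which is the statement of Lemma~\ref{pt}.
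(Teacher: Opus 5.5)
\textbf{There is a genuine gap in how you handle $f=u_q-v$.} You correctly show that $\ddbar f=-\sum_k c_k(t)\omega_k$ is pulled back from $N$ on $\Phi^{-1}(U_1)$. But the decomposition $f=(\Phi^*h+\beta)/(T-t)$, with $\ddbar\beta$ pulled back from $N$, is then impossible. It would force $\ddbar\Phi^*h=\Phi^*\theta_0$ to be pulled back from $N$ on $\Phi^{-1}(U_1)$. Yet for $B(T)>0$ the map $\Phi$ is finite on the $\BC P^m$-fibres, so $\Phi^*\theta_0$ is generically positive along them.

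What actually happens is a cancellation. The term $\Phi^*h/(T-t)$ coming from $\eta_q$ cancels a $-\Phi^*h/(T-t)$ hidden in $u_0$, since on $\Phi^{-1}(U_1)$ one has $\ddbar u_0=\ddbar v-\sum_k c_k\omega_k-\frac{1}{T-t}\ddbar\Phi^*h$. So $f$ itself plays the role of your $\beta/(T-t)$, and the decisive claim becomes $\partial_\rho f=0$.

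That claim does not follow from $\ddbar f$ being basic. Locally $f$ is fixed only up to pluriharmonic functions: for instance $\mathrm{Re}\,\xi^1$ has $\ddbar(\mathrm{Re}\,\xi^1)=0$ but nonzero radial derivative. So as written, nothing controls $\partial_\rho u_q-\partial_\rho v$ at $p_t$.

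Your Step~3 treats a term that is not actually present, and it would not close anyway, for two reasons:
\begin{itemize}
\item $R$ cannot be shrunk, because Lemma~\ref{vertex} needs $R>2C_0$.
\item Proposition~\ref{G} only gives $\partial_\rho\log x=\Theta/x\in[C^{-1},C]$ for $x\le B/2$. This is not a $t$-uniform comparison $e^\rho\sim x$.
\end{itemize}

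\textbf{The missing ingredient is the $U(m)$-invariance of $u_q$, which the paper uses.} Both $\omega(t)$ and $\theta_q$ are $U(m)$-invariant, and $u_q$ is unique up to a constant. Hence for $g\in U(m)$, $g^*u_q-u_q$ is a constant, and it vanishes on the pointwise-fixed set $P_0$.

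For an invariant function $F=F(z,\rho)$ one has $\ddbar F|_{\cH_0}=F_\rho\,\omega_{FS}|_{\cH_0}$. This holds because $\cH_0$ is $J$-invariant and tangent both to the $\pi$-fibres and to the level sets of $\rho$, so $dz$, $\partial\rho$ and $\bar\partial\rho$ all vanish on it. Since $\ddbar f$ has no $\cH_0$-component, this gives $f_\rho\equiv0$ on $\Phi^{-1}(U_1)\cap X_0$. Your own computation then finishes the proof: at $p_t$,
\[
0=\partial_\rho u_q=\partial_\rho v=y+O(1).
\]

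With this repair, your argument is the paper's proof in first-order form. The paper obtains $\ddbar u(V,JV)=0$ for a $\widetilde g$-unit vector $V\in\cH_0$ from the second variation along $U(m)$-orbits at the minimum. For invariant $u$ this is exactly $u_\rho/y=0$. Its contradiction, $\Ric_{\widetilde g}(V,V)=1$ versus $O(1/y)$, is your identity $\partial_\rho v=y\bigl(1-\Ric_{\widetilde g}(V,V)\bigr)$.
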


\begin{proof}
We prove that $y(p_t,t)\leq C$ by contradiction. Assume that there exists a sequence $\{t_i\}_{i=1}^\infty$ such that $t_i\nearrow T$ but $y(p_{t_i},t_i)\to\infty$ as $i\to\infty$. Without loss of generality, assume that $p_{t_i}\notin P_0$, i.e. $y(p_{t_i},t_i)>0$, for every $i$.

Notice that $\theta_q$ defined in the proof of Lemma~\ref{vertex} is $U(m)$-invariant, and so is the weighted Ricci potential $u$. Indeed, $u$ is defined by
\[
-\ddbar u=\Ric (\widetilde\omega(t))-\widetilde\omega(t)+\frac{1}{T-t}\Phi^*\theta_q.
\]
The right-hand side is $U(m)$-invariant. Hence, for every $p\in X\setminus P_0$, there exists a smooth curve $\gamma_p$ such that $\gamma_p'(0)\in \cH_0|_{p}$ and $u$ is constant along $\gamma_p$. At the minimum point $p_{t_i}$, let $V_i=\gamma_{p_{t_i}}'(0)\in\cH_0|_{p_{t_i}}$ and, without loss of generality, normalize $V_i$ to have unit length with respect to $\tilde g(t_i)$. Then
\[
0=\frac{\dif^2}{\dif s^2}(u\circ\gamma_{p_{t_i}})(0)=\Hess u(V_i,V_i)+\dif u(\nabla_{V_i}V_i)=\Hess u(V_i,V_i),
\]
where the last equality holds since $\dif u|_{p_{t_i}}=0$. The same argument also applies to $JV_i$. Thus $\ddbar u(V_i,JV_i)=0$, i.e.
\[\begin{aligned}
&\Ric_{\widetilde{g}(t_i)}(V_i,V_i)=\Ric (\widetilde\omega(t_i))(V_i,JV_i)\\
=&\,\widetilde\omega(t_i)(V_i,JV_i)-\frac{1}{T-t_i}\Phi^*\theta_q(V_i,JV_i)-\ddbar u(V_i,JV_i)=1,
\end{aligned}
\]
where the last equality holds because $\Phi^*\theta_q$ vanishes near $p_{t_i}$ and $\widetilde\omega(t_i)(V_i,JV_i)=|V_i|^2_{\tilde{g}(t_i)}=1$.

However, a direct computation gives
\[
\Ric_{\tilde{g}(t_i)}(V_i,V_i)=\left.\frac{1}{y}\left(m-{\Psi_y}-\left(\frac{m-1}{y}+\sum_{k=1}^r \frac{n_kq_k}{D_k(y,t)}\right){\Psi}\right)\right|_{(p_{t_i},t_i)}.
\]
Together with Proposition~\ref{G} and Corollaries~\ref{Theta0}--\ref{Theta1}, this gives
\[
\Ric_{\tilde{g}(t_i)}(V_i,V_i)=O(y(p_{t_i},t_i)^{-1}),
\]
a contradiction. Consequently, there exists $C>0$ such that $y(p_t,t)\leq C$.

\end{proof}

\begin{proof}[Proof of Proposition~\ref{left}]

Fix $t\in [0,T)$ and let $p_t$ be the Ricci vertex defined in Lemma~\ref{vertex}. For any \(z\in[0,Y]\) in the range of $y(\cdot,t)$, choose a point $q_t(z)$ on the same radial ray as $p_t$ such that \(y(q_t(z),t)=z\). Then, by Lemma~\ref{pt} and Proposition~\ref{G}, there exist $C_0,C_{Y,0}>0$ such that
\[
d_{\widetilde g(t)}(p_t,q_t(z))\leq\int_0^{y(p_t,t)}\frac{\dif \sigma}{\sqrt{\Psi(\sigma,t)}}+\int_0^z\frac{\dif \sigma}{\sqrt{\Psi(\sigma,t)}}\leq2\sqrt{2C_{Y,0}}+2\sqrt{2C_0\,Y}.
\]
Lemma~\ref{JST1.3} then shows that there exists $C_Y>0$ such that
\[
\left|\widetilde \RR(q_t(z),t)\right|\leq C\left(1+d_{\widetilde g(t)}^2(q_t(z),p_t)\right)\leq C_Y.
\]

Since $\widetilde\RR$ depends only on $y$ and $t$, every point $q\in X$ satisfying $y(q,t)=z$ has
\(\widetilde\RR(q,t)=\widetilde\RR(q_t(z),t)\).
It follows that
\[
\sup_{\{0\leq y\leq Y\}\times[0,T)}
|\widetilde\RR|
\leq C_Y.
\]

By Lemma~\ref{Rmbound}, the Riemann curvature bound holds on $\{0\leq y \leq Y\}\times[0,T)$.

\end{proof}

\paragraph*{\bfseries Part II: Far away from the Zero Section}
In the region where $y$ tends to infinity, we rule out a Type~II singularity by identifying the geometry of the singularity limit model.

The following proposition excludes Type~II blow-up along every sequence escaping to $y=+\infty$ and thus provides the estimate complementary to Proposition~\ref{left}.

\begin{prop}\label{right}
Assume $B(T)>0$. Let $\{(q_i,t_i)\}_{i=1}^\infty$ be a sequence in $X\times[0,T)$ with $t_i\nearrow T$ and $y(q_i,t_i)\to+\infty$ as $i\to\infty$. Then
\[
\limsup_{i\to\infty} \,(T-t_i)\,|\mathrm{Rm}(g(t_i))|_{g(t_i)}(q_i)<+\infty.
\]
\end{prop}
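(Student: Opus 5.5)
Argue by contradiction and perform a Type~II blow-up, following the strategy of \cite{Fon14,FT26a} described in the introduction. Suppose that along some sequence with $y(q_i,t_i)\to\infty$ we have $(T-t_i)|\Rm|(q_i,t_i)\to\infty$. By a standard point-picking argument (Hamilton's Type~II selection, restricted to the region $\{y\geq Y_0\}$), replace $(q_i,t_i)$ by points with $Q_i=|\Rm|(q_i,t_i)$ nearly maximal on a backward parabolic neighbourhood of size comparable to $Q_i^{-1/2}$. We need $Y_0$ large and these neighbourhoods to stay inside $\{y\geq Y_0\}$. For the latter we use Proposition~\ref{left}: on $\{y\le Y\}$ the rescaled curvature $|\Rm(\widetilde g)|$ is bounded by $C_Y$, so a point of much larger curvature lies at $\widetilde g$-distance $\to\infty$ from $\{y\le Y\}$. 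Rescale $g_i(s)=Q_i\,g(t_i+s/Q_i)$. Perelman's $\kappa$-noncollapsing, together with Hamilton's compactness theorem, yields a complete eternal (or ancient, with bounded curvature) limit $(X_\infty,g_\infty,q_\infty)$, which is K\"ahler, $\kappa$-noncollapsed, has $|\Rm|(q_\infty)=1$, and is a steady soliton by Hamilton's Harnack argument in the Type~II setting.

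The key step is to show that all curvature components except $\widetilde\RR_{\eta ss\eta}=\frac12\Psi_{yy}$ vanish in the limit, and that the horizontal distribution becomes parallel. In Lemma~\ref{tRm} the terms other than $\Psi_{yy}$ are $\widetilde P_k$, $\widetilde Q_{kj}$ and $\RR^k/D_k$, with $k=0,\dots,r$ and $D_0=y$. Using $|\Psi|\le Cy$, $|\Psi_y|\le C$, $|q_k\Psi/D_k|\le C$ and $D_k\ge C$ (Proposition~\ref{dk}), we get $|\widetilde P_k|,|\widetilde Q_{kj}|\le C/D_k$, and $|\RR^k|/D_k\le C$. Hence these components are $O(1)$ in $\widetilde g$. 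After multiplying by $(T-t_i)/Q_i\cdot Q_i\to$, i.e.\ after dividing by $Q_i(T-t_i)\to\infty$, they tend to $0$ in $g_i$. Similarly, Corollary~\ref{oneill}(4) gives $|A|^2_{\widetilde g}=|\II_{e_s}|^2_{\widetilde g}=\sum_k n_kq_k^2\Psi/(2D_k^2)\le C$. Here the $k=0$ term is $(m-1)\Psi/(2y^2)\le C/y\to0$, and the others are bounded using $D_k\ge C$ and $|q_k\Psi/D_k|\le C$. Thus $|A|_{g_i}\to0$ and $|\II_{e_s}|_{g_i}\to0$. Since $T\equiv0$, the O'Neill tensors of the limit vanish, so $\cH$ and $\cV$ are parallel in the limit. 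The one delicate point is the $\cH_0$ directions, whose coefficient $y$ is only large rather than bounded below uniformly. This is why we restrict to $y\to\infty$; there the $\BC P^{m-1}$ curvature $1/y$ also vanishes after rescaling. We also need derivative bounds on these quantities, which follow from Shi's estimates once $|\Rm(g_i)|$ is bounded.

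Consequently the universal cover of $X_\infty$ splits isometrically as $\BC^{m-1+n}\times\Sigma^2$, where the flat factor comes from the horizontal directions and $\Sigma$ is a complete two-dimensional surface tangent to $\operatorname{span}\{e_s,e_\eta\}$ with curvature $\frac12\Psi_{yy}$ rescaled. We have $|\Rm|(q_\infty)=1$, so $\Sigma$ is nonflat. A complete nonflat steady gradient soliton on a surface with bounded curvature is Hamilton's cigar. The cigar is asymptotic to a cylinder and hence collapsed at large scales, which contradicts $\kappa$-noncollapsing of $X_\infty$; noncollapsing passes to the universal cover and to the product with a flat factor. Therefore $(T-t_i)|\Rm|(q_i,t_i)$ stays bounded.

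I expect the main obstacle to be the blow-up selection. We must ensure that the point-picked sequence still has $y\to\infty$, that the Harnack/soliton structure of the limit is available, and that the vanishing of $A$, $\II$ and the mixed curvatures holds uniformly on the whole blow-up region, not just at $q_i$. To handle this, I would first use Proposition~\ref{left} together with the distance bound $\int dy/\sqrt\Psi$ from Proposition~\ref{G}: regions with $y\ge Y_0$ are separated from $\{y\le Y_0/2\}$ by $\widetilde g$-distance $\gtrsim\sqrt{Y_0}$. This keeps the picked points and their parabolic neighbourhoods in $\{y\ge Y_0\}$ with $Y_0\to\infty$.
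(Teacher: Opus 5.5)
Your overall route is the paper's. You do a Type~II blow-up at points with $y\to\infty$, show that $A$, $\II_{e_s}$ and the curvature on $\cH$ vanish after rescaling, obtain a parallel flat horizontal distribution in the limit, split off $\BR^{2m+2n-2}$ by de~Rham, and let the cigar contradict noncollapsing. Your way of getting the decay is to note that every component except $\frac12\Psi_{yy}$ is $O(1)$ in $\widetilde g$ on $\{y\ge c\}$, and then divide by $Q_i(T-t)\to\infty$. This is equivalent to the paper's Lemmas~\ref{inf} and~\ref{P}, which show $K_ix,\,K_id_k\to\infty$ uniformly on parabolic balls.

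One step fails as written. You claim the whole limit $(X_\infty,g_\infty)$ is a steady soliton by Hamilton's Harnack argument. That rigidity (Hamilton's, or Cao's K\"ahler version) needs nonnegative curvature operator, respectively bisectional curvature, and you do not know this on $X_\infty$. The paper splits first and argues only on $\Sigma^2$:
\begin{itemize}
\item B.-L.~Chen's theorem gives $\RR_\Sigma\ge0$, and the strong maximum principle gives $\RR_\Sigma>0$.
\item Because $|\Rm|\le1$ with equality at $(q_\infty,0)$, the maximum of $\RR_\Sigma$ is attained at an interior time.
\item Two-dimensional Harnack rigidity then makes $\Sigma$ a steady soliton, hence the cigar.
\end{itemize}
This requires an \emph{eternal} limit whose curvature maximum is attained at $\tau=0$, which is exactly what Hamilton's global two-sided selection provides. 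Your backward neighbourhood of size $\sim Q_i^{-1/2}$ gives neither this nor curvature bounds on balls of every radius, which the compactness theorem needs. Your ``ancient'' alternative would not support the Harnack step either.

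You also do not need to restrict the selection to $\{y\ge Y_0\}$. Select globally; Proposition~\ref{left} then forces $y(q_i,t_i)\to\infty$, since $Q_i(T-t_i)\to\infty$. The bound $|\nabla\log x|^2_{g_i}\le C/(Q_ix)$ (equivalently your $\int\dif y/\sqrt{\Psi}$ estimate), together with $|\dd_t x|\le C$, keeps every parabolic ball inside the region where your estimates hold.

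Finally, $\{y\to\infty\}$ contains neighbourhoods of $D_\infty$, where the frame $e_s,e_\eta$ degenerates. So before passing $P_\cH$ to the limit, you must check that it extends smoothly across $D_\infty$, as the paper does at the end of Lemma~\ref{P}.
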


We organize the proof by contradiction. Suppose that the conclusion fails. By Hamilton's Type~II point-picking argument \cite{Ham95b}, after replacing the original sequence, we may choose $(q_i,t_i)$ and
$K_i=|\operatorname{Rm}(g(t_i))|_{g(t_i)}(q_i)$
such that
\[t_i\nearrow T,\quad K_i(T-t_i)\to+\infty,\quad y(q_i,t_i)\to+\infty.\]

Define
\[g_i(\tau)=K_i g\left(t_i+K_i^{-1}\tau\right),\quad\tau\in[-K_it_i,K_i(T-t_i)).\]
The points may be chosen so that $|\operatorname{Rm}(g_i(0))|_{g_i(0)}(q_i)=1$
and, for every fixed $A>0$,
\[\limsup_{i\to\infty}\sup_{X\times[-A,A]}|\operatorname{Rm}(g_i(\tau))|_{g_i(\tau)}\leq1.\]

\begin{rmk}\label{gi}
Here $g_i$ denotes the rescaled metric and should not be confused with the fixed metrics $g_k$ on the base factors.

\end{rmk}

For $R,T'>0$ and $q\in X$, define the parabolic ball by
\[
PB_{g}(q;R,T')=\bigcup_{\tau\in[-T',T']}B_{g(\tau)}(q,R)\times\{\tau\},
\]
where $B_{g}(U,R)$ denotes the tubular neighborhood $\bigcup_{p\in U} B_{g}(p,R)$ of $U\subset X$ with respect to the metric $g$.

The following two lemmas prepare for the construction of parallel flat horizontal distributions on the limit space.

\begin{lemma}\label{inf}
For any $R,T'>0$,
\[
\inf_{PB_{g_i}(q_i;R,T')} K_ix(q,t_i+K_i^{-1}\tau)\to\infty,
\]\[
\inf_{PB_{g_i}(q_i;R,T')} K_id_k(x(q,t_i+K_i^{-1}\tau),t_i+K_i^{-1}\tau)\to\infty
\]
as $i\to\infty$.

In particular, for all sufficiently large $i$, $PB_{g_i}(q_i;R,T')\subset (X\setminus P_0)\times[-K_it_i,K_i(T-t_i))$.
\end{lemma}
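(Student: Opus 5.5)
The plan is to reduce the lemma to the fact that, in the original (unrescaled) metric, the parabolic ball $PB_{g_i}(q_i;R,T')$ is tiny compared with $\sqrt{T-t_i}$. On that scale the functions $x/(T-t)$ and $d_k/(T-t)$ change by a controlled amount, and their values at the centre are already large, so the infimum over the ball should still diverge.

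\emph{Step 1 (size of the ball).} In terms of $g$, the ball $B_{g_i(\tau)}(q_i,R)$ is the $g(t)$-ball of radius $RK_i^{-1/2}$, with $t=t_i+K_i^{-1}\tau$ and $|t-t_i|\le T'K_i^{-1}$. Since $K_i(T-t_i)\to\infty$, we have $K_i^{-1}=o(T-t_i)$. Hence $T-t=(T-t_i)(1+o(1))$ uniformly on the ball. In the $\widetilde g(t)$-normalization the spatial radius is $RK_i^{-1/2}(T-t)^{-1/2}\to 0$.

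\emph{Step 2 (spatial Lipschitz control of $\sqrt{y}$).} The radial part of $\widetilde g$ is $\dif y^2/\Psi$, and $\Psi\le Cy$ by \eqref{Psi}. So $|\nabla_{\widetilde g}y|=\sqrt{\Psi}\le C\sqrt y$, and therefore $|\nabla_{\widetilde g}\sqrt y|\le C/2$. Consequently $\sqrt{y}$ changes by at most $C\cdot o(1)$ across a $\widetilde g(t)$-ball of radius $o(1)$.

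\emph{Step 3 (time control).} Equation \eqref{x} together with Corollaries~\ref{Theta0} and~\ref{Theta1} gives $|\partial_t x|\le C$, using $|\Theta_x|\le C$, $\Theta/x\le C$ and $|q_k\Theta/d_k|\le C$. Thus over a time interval of length $2T'K_i^{-1}$ the coordinate $x$ moves by $O(K_i^{-1})=o(T-t_i)$, so $y$ moves by $o(1)$.

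Combining Steps 1–3 with the approximation $T-t\approx T-t_i$, we get
\[
\inf_{PB}\sqrt{y}\ \ge\ \sqrt{y(q_i,t_i)}-o(1)\ \to\ \infty .
\]
Hence $x\ge c\,y(q_i,t_i)(T-t_i)$ on the ball, and
\[
K_i x\ \ge\ c\,y(q_i,t_i)\,K_i(T-t_i)\ \to\ \infty .
\]
In particular $x>0$ on the ball, so it avoids $P_0$.

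\emph{Step 4 ($d_k$).} Proposition~\ref{dk} gives $d_k\ge C(T-t)$, so
\[
K_id_k\ \ge\ cK_i(T-t_i)\ \to\ \infty
\]
without using $y\to\infty$ at all.

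The main obstacle is making Step 2 rigorous at points near $P_0$ or $D_\infty$, where $\Psi$ degenerates or the coordinates are singular. I expect this to be harmless, since $y$ is Lipschitz along any path and the bound $\Psi\le Cy$ holds up to both ends. The remaining care is the order of quantifiers: time-slices at different $\tau$ and the $\widetilde g(t)$-ball of each slice have to be handled together. To do this I will estimate $\sqrt{y}$ at each point of $B_{g(t)}(q_i,RK_i^{-1/2})$ by first moving in time from $(q_i,t_i)$ to $(q_i,t)$ using Step 3, and then moving in space within the slice $t$ using Step 2.
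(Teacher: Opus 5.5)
Your argument is correct and follows essentially the same route as the paper. You control $|\partial_t x|$ at $q_i$ via \eqref{x} and Corollaries~\ref{Theta0}--\ref{Theta1}, then move within the time slice using $|\nabla x|^2=\Theta\le Cx$; you phrase this as $\sqrt{y}$ being Lipschitz for $\widetilde g$, while the paper says $\log x$ has small $g_i$-gradient. For $d_k$ you appeal directly to Proposition~\ref{dk}, which gives $K_id_k\ge c\bigl(K_i(T-t_i)-T'\bigr)$ everywhere and so slightly shortens the paper's propagation from the center.

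One small correction: because $T-t$ and $T-t_i$ differ by a factor $1+o(1)$, over the time interval $y$ changes by $o(1)\bigl(1+y(q_i,t_i)\bigr)$, not by $o(1)$. This still gives $\inf\sqrt{y}\ge(1-o(1))\sqrt{y(q_i,t_i)}-o(1)\to\infty$, so your conclusion stands. Working with $\sqrt{K_ix}$ directly, as the paper does, avoids this comparison altogether.
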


\begin{proof}

For convenience, set
\[
x_i(q,\tau)=x(q,t_i+K_i^{-1}\tau),\, y_i(q,\tau)=y(q,t_i+K_i^{-1}\tau),
\]\[
d_{k,i}(q,\tau)=d_k(x_i(q,\tau),t_i+K_i^{-1}\tau).
\]

The Type~II condition implies that $K_i(T-t_i)\to+\infty$ and $y_i(q_i,0)\to\infty$ as $i\to\infty$. Hence
\[
K_ix_i(q_i,0)=K_i(T-t_i)y_i(q_i,0),\quad K_id_{k,i}(q_i,0)=K_i(T-t_i)D_k(y_i(q_i,0),t_i),
\]
and both quantities diverge to $+\infty$ as $i\to\infty$.

It remains to prove uniform divergence on the whole parabolic ball $PB_{g_i}(q_i;R,T')$.
First, consider the center $q_i$. Combining formula \eqref{x} with Corollaries~\ref{Theta0} and~\ref{Theta1}, we find a constant $C_0>0$ such that
\[
|\dd_{\tau} x_i|=|K_i^{-1} \dd_t x|\leq \frac{C_0}{K_i},\quad |\dd_{\tau} d_{k,i}|=|K_i^{-1} \dd_t d_k|\leq \frac{C_0}{K_i}.
\]
Hence, uniformly for $\tau\in[-T',T']$,
\[\begin{aligned}
K_ix_i(q_i,\tau)&=K_ix_i(q_i,0)+\int_0^\tau K_i\dd_{\sigma} x_i(q_i,\sigma)\dif \sigma \\
&\geq K_ix_i(q_i,0)-C_0T'\to \infty,\\
K_id_{k,i}(q_i,\tau)&=K_id_{k,i}(q_i,0)+\int_0^\tau K_i\dd_{\sigma} d_{k,i}(q_i,\sigma)\dif \sigma\\
& \geq K_id_{k,i}(q_i,0)-C_0T'\to \infty.
\end{aligned}\]

Next, fix $\tau\in[-T',T']$. By Corollary~\ref{Theta0}, there exists $C_1>0$ independent of $\tau$ such that
\[\begin{aligned}
|\nabla^{g_i(\tau)}\log x_i|^2&=K_i^{-1}|\nabla^{g(t_i+K_i^{-1}\tau)}\log x_i|^2\leq \frac{C_1}{K_ix_i},\\
|\nabla^{g_i(\tau)}\log d_{k,i}|^2&=K_i^{-1}|\nabla^{g(t_i+K_i^{-1}\tau)}\log d_{k,i}|^2\leq \frac{C_1}{K_id_{k,i}}.
\end{aligned}\]
Consider a smooth curve $\gamma$ starting from $q_i$, parametrized by $g_i(\tau)$-arc length, whose length is less than $R$. Assume that $\gamma(s_0)$ is the first point at which the $x_i$-coordinate reaches $\frac12x_i(q_i,\tau)$ or $2x_i(q_i,\tau)$. Thus $x_i(\gamma(s_0),\tau)=\frac12x_i(q_i,\tau)$ or $2x_i(q_i,\tau)$, while $x_i(\gamma(s),\tau)\in (\frac12x_i(q_i,\tau),2x_i(q_i,\tau))$ for all $s<s_0$. Then
\[
\left|\log \frac{x_i(\gamma(s_0),\tau)}{x_i(q_i,\tau)}\right|\leq\int_{0}^{s_0}|\nabla^{g_i(\tau)}\log x_i|\dif s\leq R\sqrt{\frac{2C_1}{K_ix_i(q_i,\tau)}}\to 0
\]
as $i\to\infty$. This contradicts the definition of $s_0$ for all sufficiently large $i$. Hence $x_i\in (\frac12x_i(q_i,\tau),2x_i(q_i,\tau))$ on $B_{g_i(\tau)}(q_i,R)$, and therefore
\[
\inf_{B_{g_i(\tau)}(q_i,R)}K_ix\geq \frac{K_ix_i(q_i,\tau)}{2}\to+\infty\quad\text{as }i\to\infty.
\]
The same argument applies to $\inf_{B_{g_i(\tau)}(q_i,R)}K_id_{k,i}$.

Finally, since the preceding estimates are uniform for $\tau\in[-T',T']$, we obtain
\[
\inf_{PB_{g_i}(q_i;R,T')} K_ix_i(q,\tau)\to\infty,\quad \inf_{PB_{g_i}(q_i;R,T')} K_id_{k,i}(q,\tau)\to\infty.
\]

\end{proof}

\begin{lemma}\label{P}
The horizontal projection $P_{\cH}$ on $TX_0$ extends smoothly to $T(X\setminus P_0)$. Moreover, for every $K>0$ and $t<T$, there exists $C>0$, independent of $K$ and $t$, such that
\[\begin{aligned}
|\nabla^{Kg(t)}P_{\cH}|^2_{Kg(t)}& \leq C\left(\frac{1}{Kx}+\sum_{k=1}^r\frac{1}{Kd_k}\right),\\
|\Rm_{Kg(t)}|_{\cH}& \leq C\left(\frac{1}{Kx}+\sum_{k=1}^r\frac{1}{Kd_k}\right),
\end{aligned}\]
on $X\setminus P_0$.

\end{lemma}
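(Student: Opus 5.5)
The plan is to work in the orthonormal frame $\{e_s,e_\eta,e_{k,\alpha}\}$ on $X_0$ and read everything off from Proposition~\ref{connection} and Corollary~\ref{Rm}. Only the bounds on $X_0$ are genuinely needed. Both sides of each inequality are continuous on $X\setminus P_0$ for fixed $t<T$, since $x$ and $d_k$ are bounded below there ($d_k(B)=b_k(t)>0$). So once the smooth extension of $P_{\cH}$ is established, the estimates pass to $D_\infty$ by continuity.

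First, the smooth extension. I would write $P_{\cV}=1-P_{\cH}$ in terms of the holomorphic Euler vector field $Z=\sum_\alpha\xi^\alpha\partial_{\xi^\alpha}$. The real and imaginary parts of $Z$ span $\cV=\operatorname{span}\{\partial_x,\varsigma\}$, so
\[
P_{\cV}(Y)=\frac{g(Y,\operatorname{Re}Z)\operatorname{Re}Z+g(Y,J\operatorname{Re}Z)J\operatorname{Re}Z}{|\operatorname{Re}Z|^2_g}.
\]
Near $D_\infty$, use the chart with $\xi=w^{-1}(1,\zeta)$, where $w$ is a local defining function of $D_\infty$. There $Z=-w\partial_w$ is a smooth field vanishing to first order along $D_\infty$. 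The closing condition (3) of Proposition~\ref{kahler}, namely smoothness of $\psi_\infty$ with $\psi_\infty'(0)>0$, gives $|\operatorname{Re}Z|_g^2=|w|^2\big(c+O(|w|)\big)$ with $c>0$ smooth. Hence the quotient above is $(w\partial_w)\otimes(\cdots)/|w|^2$ and extends smoothly. Along $D_\infty$, $P_{\cV}$ becomes the orthogonal projection onto the normal line of $D_\infty$. Equivalently, $\cH|_{D_\infty}=TD_\infty$, which is consistent with $D_\infty$ being a $\BC P^{m-1}$-bundle over $N$. I expect this step to be the main technical obstacle. Controlling the first-order vanishing of $Z$ against the Calabi asymptotics needs some care; the natural route is to compute $g(w\partial_w,\overline{w\partial_w})$ directly from $\psi_\infty$.

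Second, the gradient bound. Since $\cH$ and $\cV$ are orthogonal, $\nabla P_{\cH}$ is determined by the mixed connection forms $\varpi_{(k,\alpha)s}$ and $\varpi_{(k,\alpha)\eta}$, which are the O'Neill tensors $A$ and $\II$. Using $T\equiv0$ and Corollary~\ref{oneill}(4),
\[
|\nabla^g P_{\cH}|_g^2\le C(|A|^2+|\II_{e_s}|^2)\le C\sum_{k=0}^r\frac{q_k^2\Theta}{d_k^2}.
\]
For $k=0$ this term is $\Theta/x^2\le C/x$ by Corollary~\ref{Theta0}. For $k\ge1$ it equals $\frac{|q_k|}{d_k}\cdot\big|\frac{q_k\Theta}{d_k}\big|\le C/d_k$, again by Corollary~\ref{Theta0}. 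Finally, replacing $g$ by $Kg$ scales $|\nabla P_{\cH}|^2$ by $K^{-1}$ (the Levi-Civita connection is unchanged and one covector slot is measured in $Kg$). This gives the first inequality with $C$ independent of $K$ and $t$.

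Third, the curvature bound on $\cH$. The components of $\Rm_g$ with all four indices horizontal are listed in Corollary~\ref{Rm}, using the convention $N_0=\BC P^{m-1}$, $d_0=x$, $q_0=1$:
\[
\frac{1}{d_k}\RR^k_{\alpha\beta\gamma\delta}\quad\text{and}\quad Q_{kj}=\frac{q_kq_j\Theta}{4d_kd_j}.
\]
The first is bounded by $C/d_k$ because the $N_k$ and $\BC P^{m-1}$ are fixed compact manifolds. For the second, Corollary~\ref{Theta0} gives $|Q_{kj}|\le C\big|\frac{q_k\Theta}{d_k}\big|\cdot\frac{|q_j|}{d_j}\le C/d_j$. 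Since $|\Rm_{Kg}|_{Kg}=K^{-1}|\Rm_g|_g$, the second inequality follows.
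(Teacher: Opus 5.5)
Your proposal is correct. For the two estimates it follows the paper's argument:
\begin{itemize}
\item $\nabla P_{\cH}$ is read off from the mixed connection forms $\varpi_{(k,\alpha)s}$ and $\varpi_{(k,\alpha)\eta}$, with vertical derivatives vanishing because $T\equiv0$. This gives exactly $|\nabla^g P_{\cH}|_g^2=2\bigl(\frac{(m-1)\Theta}{x^2}+\sum_{k\ge1}\frac{n_kq_k^2\Theta}{d_k^2}\bigr)$.
\item The all-horizontal curvature components are $\frac{1}{d_k}\RR^k$ and $Q_{kj}$.
\item Corollary~\ref{Theta0}, together with the $K^{-1}$ scaling, finishes both bounds.
\end{itemize}

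The genuine difference is the extension across $D_\infty$. The paper writes down the horizontal lifts $\partial_{z^\mu}-\Gamma_\mu(z)w\partial_w$ coming from the Chern connection of $N_{D_\infty/X}$. It defines the extension by fixing these lifts and killing $\partial_w,\partial_{\bar w}$. This exhibits the splitting itself as smooth up to $w=0$ and identifies $P_{\cH}|_{D_\infty}$ with the projection onto $TD_\infty$ at once. However, it relies, without comment, on identifying $\cV^{\perp_g}$ with that connection's horizontal space.

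Your route via the Euler field needs only that $g$ is a smooth Hermitian metric on $X$ and that $Z$ has a simple zero along $D_\infty$. The step you flag as the main obstacle is in fact a one-line cancellation. On $T^{1,0}$, with $g$ extended complex-bilinearly, $P_{\cV}Y=\frac{g(Y,\overline{Z})}{g(Z,\overline{Z})}Z=\frac{g(Y,\partial_{\bar w})}{g(\partial_w,\partial_{\bar w})}\partial_w$, which is manifestly smooth. So no asymptotics of $\psi_\infty$ are needed beyond Proposition~\ref{kahler}'s guarantee that $g$ is smooth and positive at $D_\infty$.

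At $D_\infty$ your formula gives $\ker P_{\cV}=\partial_w^{\perp_g}$. Your remark that this equals $TD_\infty$ needs the additional fact $g(\partial_w,TD_\infty)=0$ along $D_\infty$. That fact holds because $g$ is $S^1$-invariant and the circle fixes $D_\infty$ pointwise while rotating the normal direction. The remark is not part of the lemma, so this is not a gap.
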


\begin{proof}

First, we work on $TX_0$. By definition,
\[
\nabla_X P_{\cH}(Y)=\nabla_X(P_\cH Y)-P_\cH(\nabla_X Y).
\]

If $Y\in\cH$, then $\nabla_X P_{\cH}(Y)=(\mathrm{Id}-P_\cH)(\nabla_X Y)=P_\cV(\nabla_X Y)$, while if $Y\in\cV$, then $\nabla_X P_{\cH}(Y)=-P_\cH(\nabla_X Y)$. Thus, by Corollary~\ref{oneill},
\[\begin{aligned}
|\nabla^{Kg(t)} P_\cH|^2_{Kg(t)} & =\frac{2}{K}\sum_{k=0}^r\sum_{\alpha,\beta=1}^{2n_k}\left(\langle\nabla_{e_{k,\alpha}}e_s,e_{k,\beta}\rangle^2+\langle\nabla_{e_{k,\alpha}}e_\eta,e_{k,\beta}\rangle^2\right)\\
& =\frac{2}{K}\left(\frac{(m-1)\Theta}{x^2}+\sum_{k=1}^{r}\frac{n_kq_k^2\Theta}{d^2_k}\right)\leq C\left(\frac{1}{Kx}+\sum_{k=1}^r\frac{1}{Kd_k}\right).
\end{aligned}\]

The formulas for the Riemann curvature tensor on $\cH$ in Corollary~\ref{Rm} give
\[\begin{aligned}
|\Rm_{Kg(t)}|_{\cH}& \leq \frac{1}{K}|\Rm_{g(t)}|_{\cH} \leq \frac{C_0}{K}\left(\sum_{k=1}^{r}\frac{|\Rm^k|}{d_k}+\frac{1}{x}+\frac{\Theta}{x^2}+\sum_{k=1}^r\frac{|q_k|\Theta}{xd_k}+\sum_{k,j=1}^r \frac{|q_kq_j|\Theta}{d_kd_j} \right)\\
&\leq C\left(\frac{1}{Kx}+\sum_{k=1}^r\frac{1}{Kd_k}\right).
\end{aligned},
\]
for some constants $C_0,C>0$, where the last inequality follows from Proposition~\ref{G} and Corollary~\ref{Theta0}.

It remains to check that $P_\cH$ extends smoothly across $D_\infty$. Once it extends smoothly, the estimates above remain valid near $D_\infty$.

Consider the normal bundle $E=N_{D_\infty/X}\cong \pi^*L^{-1}\otimes\pi_0^*\cO_{\BC P^{m-1}}(1)$, which has complex rank $1$. Choose local holomorphic coordinates $(z,w)$, where $z=(z^1,\cdots,z^{n+m-1})$ are coordinates on $D_\infty$ and $w\in \BC$ is the fiber coordinate, so that $D_\infty=\{w=0\}$. Denote by \(\Gamma=\Gamma_\mu(z)\dif z^{\mu}\) the Chern connection $1$-form of the induced Hermitian metric on $E$, where $\Gamma_\mu\in C^{\infty}(D_\infty)$. The horizontal lift of $\dd_{z^\mu}$ is given by $X^{\cH}_\mu=\dd_{z^\mu}-\Gamma_\mu(z)w\dd_w$. In particular, $X^\cH_\mu|_{w=0}=\dd_{z^\mu}$.

Now $P_\cH$ extends smoothly to $T(X\setminus P_0)$ by $P_\cH(X^\cH_\mu)=X^\cH_\mu$ and $P_\cH(\dd_w)=P_\cH(\bar\dd_{w})=0$. In particular, $P_\cH|_{D_\infty}$ is exactly the projection onto $TD_\infty$.
\end{proof}

By Perelman's local noncollapsing theorem \cite{Per02} and Hamilton's compactness theorem \cite{Ham95a}, after passing to a subsequence, the pointed Ricci flows \((X,g_i(\tau),q_i)\) converge smoothly in the pointed Cheeger--Gromov sense to an eternal Ricci flow
\[(X_\infty,g_\infty(\tau),q_\infty), \qquad -\infty<\tau<\infty.\]
The limit is complete, nonflat, and has bounded curvature. It satisfies
$$
|\operatorname{Rm}(g_\infty(\tau))|_{g_\infty(\tau)}\leq1,\quad|\operatorname{Rm}(g_\infty(0))|_{g_\infty(0)}(q_\infty)=1.
$$

Now we can prove that the universal cover of $X_\infty$ splits as a product of a two-dimensional nonflat factor and a flat factor.

\begin{lemma}
The universal cover of \(X_\infty\) splits isometrically as
\[
(\widetilde X_\infty,\widetilde g_\infty(\tau))
\cong
(\Sigma^2,h(\tau))
\times
(\mathbb R^{2m+2n-2},g_{\mathrm{Euc}}),
\]
where \((\Sigma^2,h(\tau))\) is Hamilton's cigar soliton up to scaling
and pullback by diffeomorphisms.

\end{lemma}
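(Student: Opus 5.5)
The plan is to pass the horizontal distribution $\cH$ to the limit, show that it becomes a parallel, flat distribution of real rank $2m+2n-2$, and then apply de Rham decomposition on the universal cover.

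\textbf{Step 1: the horizontal projection converges.} By Lemma~\ref{inf}, for fixed $R,T'$ and all large $i$, the parabolic balls $PB_{g_i}(q_i;R,T')$ avoid $P_0$. On these balls, Lemma~\ref{P} (applied with $K=K_i$) gives
\[
|\nabla^{g_i(\tau)}P_{\cH}|^2_{g_i(\tau)}+|\Rm_{g_i(\tau)}|_{\cH}\le C\Bigl(\frac{1}{K_ix}+\sum_{k=1}^r\frac{1}{K_id_k}\Bigr)\longrightarrow 0
\]
uniformly. Since $P_\cH$ extends smoothly across $D_\infty$, no boundary issue arises there.

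The projections $P_\cH$ are orthogonal projections, so they are bounded in $C^0$. I want $C^\infty$ convergence along the Cheeger--Gromov diffeomorphisms, so I need higher-derivative control. I would get it from the explicit connection forms in Proposition~\ref{connection}. Each covariant derivative of $P_\cH$ is an algebraic expression in $\sqrt{\Theta}/d_k$, $\Theta_x/\sqrt\Theta$ and their $x$-derivatives. Alternatively, and more robustly, I would use the uniform curvature bounds together with Shi's derivative estimates on the rescaled flows, and then Arzel\`a--Ascoli for the sequence $P_\cH$: the first derivative is bounded and tends to zero, and this bound can be iterated.

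In either approach, a subsequence of $P_\cH$ converges in $C^{1,\alpha}_{loc}$ to a tensor field $P_\infty$ on $X_\infty\times(-\infty,\infty)$ with the following properties:
\begin{itemize}
\item[(a)] $P_\infty$ is a $g_\infty$-orthogonal projection of constant rank $2m+2n-2$;
\item[(b)] $P_\infty$ commutes with the limiting complex structure $J_\infty$, because $\cH$ is $J$-invariant;
\item[(c)] $\nabla^{g_\infty}P_\infty\equiv0$;
\item[(d)] $\Rm_{g_\infty}$ vanishes on the image of $P_\infty$.
\end{itemize}
For (d), note that $|\Rm|_\cH$ controls every component with at least one entry in $\cH$. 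I would check this against Corollary~\ref{Rm}: the mixed terms $P_k$ carry the factor $\Theta_x/d_k$ or $\Theta/d_k^2$, and both vanish after rescaling. This is the estimate already built into Lemma~\ref{P}.

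\textbf{Step 2: de Rham splitting.} Lift $P_\infty$ to the universal cover $\widetilde X_\infty$. There $\cH_\infty=\Im P_\infty$ and its orthogonal complement $\cV_\infty$ are parallel distributions. Since $(\widetilde X_\infty,\widetilde g_\infty(\tau))$ is complete, the de Rham decomposition theorem gives a product splitting into the integral manifolds of $\cV_\infty$ and $\cH_\infty$.

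\begin{itemize}
\item The $\cH_\infty$-factor is complete, simply connected and flat, hence isometric to $\BR^{2m+2n-2}$.
\item The $\cV_\infty$-factor $\Sigma^2$ is a complete, simply connected surface. It is nonflat because $|\Rm|(q_\infty)=1$ and all curvature is concentrated in $\cV_\infty$.
\end{itemize}

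The splitting must also respect the flow. Because $P_\infty$ is parallel for every $\tau$ and is the limit of a single time-independent family of subbundles, the splitting is preserved by the Ricci flow. Here I use that $\cV=\mathrm{span}\{\dd_x,\varsigma\}$ is spanned by the gradient of $x$ and its $J$-rotation, and that these are well behaved in time. Hence $h(\tau)$ is a complete, eternal, nonflat Ricci flow on a surface with bounded curvature.

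\textbf{Step 3: identification with the cigar.} The curvature of $h(\tau)$ is bounded by $1$ and is attained at $q_\infty$ at $\tau=0$. The Type~II point-picking makes $\sup|\Rm|$ attained at a space-time point of the eternal flow. Together with nonnegative curvature, which holds for complete ancient surface flows, Hamilton's classification of eternal solutions with curvature maximum then shows $h$ is a steady gradient soliton. Among complete surfaces the only nonflat such soliton is the cigar, up to scaling and diffeomorphism.

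\textbf{Main obstacle.} The delicate step is Step 1. One needs uniform higher-order convergence of $P_\cH$, rather than merely $C^0$ convergence, in order to pass $\nabla P_\cH\to0$ to the limit and to guarantee the rank of $P_\infty$ is preserved. One also needs the parallel splitting to be compatible with time. I would first try deriving the needed bounds directly from the explicit formulas, using $K_ix\to\infty$ and $K_id_k\to\infty$ from Lemma~\ref{inf}.
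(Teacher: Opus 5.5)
Your proposal is correct and follows the paper's route: pass $P_{\cH}$ to the limit using Lemmas~\ref{inf} and~\ref{P}, apply de Rham splitting to the resulting parallel flat distribution, and identify the surface factor as the cigar via Chen's nonnegativity, Hamilton's Harnack rigidity and the two-dimensional classification. The one correction concerns your ``main obstacle'': no higher-order control of $P_{\cH}$ is needed, and Shi's estimates would in any case bound $\nabla^k\Rm$, not $\nabla^kP_{\cH}$. The paper takes a subsequential limit of $\hat P_i=F_i^*P_{\cH}$ at $q_\infty$ and parallel-transports it. The estimate $\|\hat P_i(p)-\PT_{i,\gamma}\hat P_i(q_\infty)\PT_{i,\gamma}^{-1}\|\le L_{i,\gamma}\sup|\nabla\hat P_i|\to0$ then shows that this parallel tensor is well defined and is the pointwise limit of $\hat P_i$. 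Its rank is therefore the trace, namely $2m+2n-2$.
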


\begin{proof}
Hamilton's compactness theorem \cite{Ham95a} gives a sequence of embeddings $F_i:U_i\to X$, with $U_i\nearrow X_\infty$ and $q_i=F_i(q_\infty)$, such that $F_i^*g_i(\tau)\to g_\infty(\tau)$ smoothly on every compact subset of $X_\infty\times\BR$. In particular, $F_i^*g_i(0)$ is equivalent to $g_\infty(0)$; that is, there exists $C_i>0$ such that $C_i^{-1}g_\infty(0)\leq F_i^*g_i(0)\leq C_ig_\infty(0)$.

Fix $l\in \BN$ and set $B_l=\overline{B_{g_\infty(0)}(q_\infty;l)}$. Then $F_i(B_l)\subset X\setminus P_0$ for all sufficiently large $i$. Combining Lemmas~\ref{inf} and~\ref{P}, we obtain
\[
\sup_{F_i(B_l)\times[-l,l]}|\nabla^{g_i(\tau)}P_\cH|\to 0,\quad \sup_{F_i(B_l)\times[-l,l]}|\Rm_{g_i(\tau)}|_\cH\to 0
\]
as $i\to \infty$.

Define $\hat{P}_i=F_i^*P_\cH$. Then
\[
\sup_{B_l\times[-l,l]} |\nabla^{F_i^*g_i(\tau)}\hat{P}_i|_{F_i^*g_i(\tau)}\to 0\quad\text{as }i\to\infty,\qquad \dd_\tau \hat{P}_i\equiv 0.
\]

Next, we construct the orthogonal projection ${P}_\infty\in\operatorname{End}(TX_\infty)$. The sequence $\hat{P}_i(q_\infty)$ is bounded with respect to $g_\infty(0)$. Hence, after passing to a subsequence, $\hat{P}_{i}(q_\infty)$ converges to some $P_\infty(q_\infty)\in \operatorname{End}(T_{q_\infty}X_\infty)$.

For any $p\in B_l$, let $\gamma:[0,1]\to B_l$ be a smooth curve from $q_\infty$ to $p$, and let $\PT_{i,\gamma}$ be parallel transport along $\gamma$ with respect to $F_i^*g_i(0)$. Then
\[\begin{aligned}
||\hat{P}_i(p)-\PT_{i,\gamma}\hat{P}_i(q_\infty)\PT_{i,\gamma}^{-1}||&\leq \int_0^1 ||(\nabla^{F_i^*g_i(0)}_{\gamma'(s)} \hat{P}_i)(\gamma(s))||\cdot|\gamma'(s)|\dif s\\
&\leq L_{i,\gamma}|\nabla^{F_i^*g_i(0)}\hat{P}_i|_{F_i^*g_i(0)},
\end{aligned}\]
where $L_{i,\gamma}$ is the length of $\gamma$ with respect to $F_i^*g_i(0)$. Since $F_i^*g_i(0)\to g_\infty(0)$ on $B_l$, $\PT_{i,\gamma}$ converges to parallel transport $\PT_\gamma$ with respect to $g_\infty(0)$. Hence, for each $p\in B_l$, define $P_\infty(p)=\PT_\gamma P_\infty(q_\infty)\PT_{\gamma}^{-1}$. Then $\hat{P}_i(p)\to P_\infty(p)$. To check that $P_\infty(p)$ is well-defined, suppose that $\gamma_1$ and $\gamma_2$ are two curves in $B_l$ connecting $q_\infty$ to $p$. Then
\[\begin{aligned}
&||\PT_{i,\gamma_1}\hat{P}_i(q_\infty)\PT_{i,\gamma_1}^{-1}-\PT_{i,\gamma_2}\hat{P}_i(q_\infty)\PT_{i,\gamma_2}^{-1}||\\
\leq& ||\PT_{i,\gamma_1}\hat{P}_i(q_\infty)\PT_{i,\gamma_1}^{-1}-\hat{P}_i(p)||+||\hat{P}_i(p)-\PT_{i,\gamma_2}\hat{P}_i(q_\infty)\PT_{i,\gamma_2}^{-1}||
\end{aligned}\]
Letting $i\to\infty$, we obtain $\PT_{\gamma_1}P_\infty(q_\infty)\PT_{\gamma_1}^{-1}=\PT_{\gamma_2}P_\infty(q_\infty)\PT_{\gamma_2}^{-1}$. Since $l\in \BN$ is arbitrary, $P_\infty$ can be constructed on all of $TX_\infty$.

Notice that \[
P_\infty^2=P_\infty,\, \nabla^{g_\infty(\tau)}P_\infty=0,\, \rank P_\infty(p)=\dim \Im P_\infty(p)=2m+2n-2.
\]
Set $\cH_\infty=\Im P_\infty$ and $\cV_\infty=\Ker P_\infty$. Then we obtain the orthogonal decomposition $TX_\infty=\cH_\infty\oplus\cV_\infty$ and $|\Rm_{g_\infty(\tau)}|_{\cH_\infty}=0$. By the de Rham splitting theorem, the universal cover is $\tilde{X}_\infty\cong\Sigma^2\times\BR^{2(m+n-1)}$ with $\tilde{g}_\infty(\tau)=h(\tau)\oplus g_{\operatorname{Euc}}$.

Since $|\mathrm{Rm}(g_\infty(0))|(q_\infty)=1$ and the Euclidean factor is flat,
$\Sigma^2$ is nonflat. By B.-L.~Chen's theorem \cite{Chen09},
any complete ancient solution to the Ricci flow with bounded curvature has
nonnegative scalar curvature; the two-dimensional strong maximum principle
then yields $\RR_\Sigma>0$ on $\Sigma^2\times\mathbb R$. Because
$|\mathrm{Rm}(g_\infty)|\le 1$ and equality holds at an interior space-time
point, $\RR_\Sigma$ attains a global positive maximum there. Hamilton's
differential Harnack rigidity \cite{Ham93} implies that $(\Sigma^2,h(\tau))$ is a simply connected steady
gradient Ricci soliton. By Hamilton's classification in dimension two \cite{Ham88}, $\Sigma^2$ is Hamilton's cigar soliton.

\end{proof}

\begin{proof}[Proof of Proposition~\ref{right}]
Perelman's local noncollapsing theorem \cite{Per02} implies that the blow-up limit is $\kappa$-noncollapsed at all scales. Therefore, the product of the cigar soliton and a flat space cannot be a singularity model. This contradiction proves Proposition~\ref{right}.
\end{proof}

\begin{proof}[Proof of Theorem~\ref{thmB}]
Suppose, towards a contradiction, that the conclusion fails. Then there
exists a sequence $(q_i,t_i)\in X\times[0,T)$ with $t_i\nearrow T$ such that
\begin{equation*}
(T-t_i)\,|\mathrm{Rm}(g(t_i))|_{g(t_i)}(q_i)\to+\infty.
\end{equation*}

If $y(q_i,t_i)$ were bounded, then passing to a subsequence we would have
$y(q_i,t_i)\le Y$ for some fixed $Y<\infty$ and all $i$. By
Proposition~\ref{left},
\[
\sup_{\{0\le y\le Y\}\times[0,T)}(T-t)\,|\mathrm{Rm}(g(t))|_{g(t)}\le C_Y<\infty,
\]
which is a contradiction. Hence, after passing to a further
subsequence, \(y(q_i,t_i)\to+\infty\).

However, Proposition~\ref{right} gives
\[
\limsup_{i\to\infty}(T-t_i)\,|\mathrm{Rm}(g(t_i))|_{g(t_i)}(q_i)<+\infty,
\]
This contradiction shows that no such sequence exists. Hence the global Type~I bound holds when $B(T)>0$.
\end{proof}

\begin{proof}[Proof of Theorem~\ref{thm}]
Combining Propositions~\ref{thmB0} and~\ref{thmB} completes the proof.
\end{proof}

\end{document}